\newcommand\@dotsep{4.5}
\def\@tocline#1#2#3#4#5#6#7{\relax
  \ifnum #1>\c@tocdepth
  \else
    \par \addpenalty\@secpenalty\addvspace{#2}%
    \begingroup \hyphenpenalty\@M
    \@ifempty{#4}{%
      \@tempdima\csname r@tocindent\number#1\endcsname\relax
    }{%
      \@tempdima#4\relax
    }%
    \parindent\z@ \leftskip#3\relax \advance\leftskip\@tempdima\relax
    \rightskip\@pnumwidth plus1em \parfillskip-\@pnumwidth
    #5\leavevmode\hskip-\@tempdima #6\relax
    \leaders\hbox{$\m@th
      \mkern \@dotsep mu\hbox{.}\mkern \@dotsep mu$}\hfill
    \hbox to\@pnumwidth{\@tocpagenum{#7}}\par
    \nobreak
    \endgroup
  \fi}
\def\l@subsection{\@tocline{2}{0pt}{20pt}{5pc}{}}
\def\l@subsubsection{\@tocline{2}{0pt}{30pt}{5pc}{}}
\definecolor{lred}{RGB}{220,60,60}
\theoremstyle{plain}
\newtheorem{Thm}{Theorem}[section]
\newtheorem*{Thm*}{Theorem}
\newtheorem{Cor}[Thm]{Corollary}
\newtheorem{Lem}[Thm]{Lemma}
\newtheorem{Prop}[Thm]{Proposition}
\theoremstyle{definition}
\newtheorem{Def}[Thm]{Definition}
\newtheorem{Ex}[Thm]{Example}
\theoremstyle{remark}
\DeclareMathOperator{\im}{im}
\DeclareMathOperator{\proj}{Proj}
\DeclareMathOperator{\coker}{coker}
\DeclareMathOperator{\SL}{SL}
\DeclareMathOperator{\rad}{rad}
\DeclareMathOperator{\jtype}{JType}
\DeclareMathOperator{\rank}{rank}
\DeclareMathOperator{\spn}{span}
\DeclareMathOperator{\ad}{ad}
\DeclareMathOperator{\End}{End}
\renewcommand{\hom}{\mathrm{Hom}}
\newcommand{\slt}[1][2]{\mathfrak{sl}_{#1}}
\newcommand{\set}[1]{\left\{#1\right\}}
\newcommand{\gim}[2][\mbox{}]{\im\Theta^{#1}_{#2}}
\newcommand{\gker}[2][\mbox{}]{\ker\Theta^{#1}_{#2}}
\newcommand{\gcoker}[2][\mbox{}]{\coker\Theta^{#1}_{#2}}
\newcommand{\Ho}[1][1]{\mathcal H^{[#1]}}
\newcommand{\F}{\mathscr F}
\newcommand{\K}[1][1]{\ker(\Ho)}
\newcommand{\PG}[1][{\mathfrak g}]{\mathbb P(#1)}
\newcommand{\OPG}[1][\mathfrak g]{\mathcal O_{\mathbb P(#1)}}
\newcommand{\OP}{\mathcal O_{\mathbb P^1}}
\begin{document}

\title[Computations of sheaves associated to $\slt$]{Computations of sheaves associated to the representation theory of $\slt$}

\author{Jim Stark}

\thanks{The author was partially supported by the NSF grant DMS-0953011}

\address{Department of Mathematics\\
   University of Washington\\
   Seattle, WA 98105}

\email{jstarx@gmail.com}

\date{5 September, 2013}

\begin{abstract}
We explicitly compute examples of sheaves over the projectivization of the spectrum of the cohomology of $\slt$.  In particular, we compute $\gker{M}$ for every indecomposable $M$ and we compute $\F_i(M)$ when $M$ is an indecomposable Weyl module and $i \neq p$.  We also give a brief review of the classification of $\slt$-modules and of the general theory of such sheaves in the case of a restricted Lie algebra.
\end{abstract}

\maketitle

\tableofcontents

\setcounter{section}{-1}
\section{Introduction}

Let $\mathfrak g$ be a restricted Lie algebra over an algebraically closed field $k$ of positive characteristic $p$.  Suslin, Friedlander, and Bendel \cite{bfs1paramCoh} have shown that the maximal spectrum of the cohomology of $\mathfrak g$ is isomorphic to the variety of $p$-nilpotent elements in $\mathfrak g$, i.e., the so called restricted nullcone $\mathcal N_p(\mathfrak g)$.  This variety has become an important invariant in representation theory; for example, it can be used to give a simple definition of the local Jordan type of a $\mathfrak g$-module $M$ and consequently of the class of modules of constant Jordan type, a class first studied by Carlson, Friedlander, and Pevtsova \cite{cfpConstJType} in the case of a finite group scheme.  Friedlander and Pevtsova \cite{friedpevConstructions} have initiated what is, in the case of a Lie algebra $\mathfrak g$, the study of certain sheaves over the projectivization, $\PG$, of $\mathcal N_p(\mathfrak g)$.  These sheaves are constructed from $\mathfrak g$-modules $M$ so that representation theoretic information, such as whether $M$ is projective, is encoded in their geometric properties.  Explicit computations of these sheaves can be challenging due not only to their geometric nature but also to the inherent difficulty in describing representations of a general Lie algebra.

The purpose of this paper is to explicitly compute examples of these sheaves for the case $\mathfrak g = \slt$.  The Lie algebra $\slt$ has tame representation type and the indecomposable modules were described explicitly by Alexander Premet \cite{premetSl2} in 1991.  This means there are infinitely many isomorphism classes of such modules, so the category is rich enough to be interesting, but these occur in finitely many parameterized families which allow for direct computations.  We also note that the variety $\PG[\slt]$ over which we wish to compute these sheaves is isomorphic to $\mathbb P^1$.  By a theorem of Grothendieck we therefore know that locally free sheaves admit a strikingly simple description: They are all sums of twists of the structure sheaf.  This makes $\slt$ uniquely suited for such computations.

We begin in \Cref{secRev} with the case of a general restricted Lie algebra $\mathfrak g$.  We will review the definition of $\mathcal N_p(\mathfrak g)$ and its projectivization $\PG$.  We use this to define the local Jordan type of a module $M$.  We define the global operator $\Theta_M$ associated to a $\mathfrak g$-module $M$ and use it to construct the sheaves we are interested in computing.  We will review theorems which not only indicate the usefulness of these sheaves but are also needed for their computation.

In \Cref{secSl2} we begin the discussion of the category of $\slt$-modules.  Our computations are fundamentally based on having, for each indecomposable $\slt$-module, an explicit basis and formulas for the $\slt$ action.  To this end we review Premet's description.  There are four families and for each family we specify not only the explicit basis and $\slt$-action, but also a graphical representation of the module and the local Jordan type of the module.  For the Weyl modules $V(\lambda)$, dual Weyl modules $V(\lambda)^\ast$, and projective modules $Q(\lambda)$ this information was previously known (see for example Benkart and Osborn \cite{benkartSl2reps}) but for the so called non-constant modules $\Phi_\xi(\lambda)$ we do not know if such an explicit description has previously been given.  Thus we give a proof that this description follows from Premet's definition of the modules $\Phi_\xi(\lambda)$.  We also compute the Heller shifts $\Omega V(\lambda)$ of the Weyl modules for use in \Cref{secLieEx}.

In \Cref{secMatThms} we digress from discussing Lie algebras and compute the kernels of four particular matrices.  These matrices, with entries in the polynomial ring $k[s, t]$, will represent sheaf homomorphisms over $\mathbb P^1 = \proj k[s, t]$ but in this section we do not work geometrically and instead consider these matrices to be maps of free $k[s, t]$-modules.  This section contains the bulk of the computational effort of this paper.

In \Cref{secLieEx} we are finally ready to carry out the computations promised.  Friedlander and Pevtsova have computed $\gker{V(\lambda)}$ for the case $0 \leq \lambda \leq 2p - 2$ \cite{friedpevConstructions}.  We compute the sheaves $\gker{M}$ for every indecomposable $\slt$-module $M$.  This computation is essentially the bulk of the work in the previous section; the four matrices in that section describe the global operators of the four families of $\slt$-modules.  We also compute $\F_i(V(\lambda))$ for $i \neq p$ and $V(\lambda)$ indecomposable using an inductive argument.  The base case is that of a simple Weyl module $(\lambda < p)$ and is done by noting that $\F_i(V(\lambda))$ is zero when $i \neq \lambda + 1$ and that $\F_{\lambda + 1}(V(\lambda))$ can be identified with the kernel sheaf $\gker{V(\lambda)}$.  For the inductive step we use the Heller shift computation from \Cref{secSl2} together with a theorem of Benson and Pevtsova \cite{benPevtVectorBundles}.

\section{Jordan type and global operators for Lie algebras} \label{secRev}

In this section we review the definition of the restricted nullcone of a Lie algebra $\mathfrak g$ and of the local Jordan type of a $\mathfrak g$-module $M$.  We also define the global operator associated to a $\mathfrak g$-module $M$ and the sheaves associated to such an operator.  Global operators and local Jordan type can be defined for any infinitesimal group scheme of finite height.  Here we give the definitions only for a restricted Lie algebra $\mathfrak g$ and take $\slt$ as our only example.  For details on the general case as well as additional examples we refer the reader to Friedlander and Pevtsova \cite{friedpevConstructions} or Stark \cite{starkHo1}.

Let $\mathfrak g$ be a restricted Lie algebra over an algebraically closed field $k$ of positive characteristic $p$.  Recall that this means $\mathfrak g$ is a Lie algebra equipped with an additional \emph{$p$-operation} $(-)^{[p]}\colon\mathfrak{g \to g}$ satisfying certain axioms (see Strade and Farnsteiner \cite{stradeFarnModularLie} for details).  Here we merely note that for the classical subalgebras of $\mathfrak{gl}_n$ the $p$-operation is given by raising a matrix to the $p^\text{th}$ power.

\begin{Def}
The restricted nullcone of $\mathfrak g$ is the set
\[\mathcal N_p(\mathfrak g) = \set{x \ \middle| \ x^{[p]} = 0}\]
of $p$-nilpotent elements.  This is a conical irreducible subvariety of the affine space $\mathfrak g$.  We denote by $\PG$ the projective variety whose points are lines through the origin in $\mathcal N_p(\mathfrak g)$.
\end{Def}

\begin{Ex} \label{exNslt}
Let $\mathfrak g = \slt$ and take the usual basis
\[e = \begin{bmatrix} 0 & 1 \\ 0 & 0 \end{bmatrix}, \qquad f = \begin{bmatrix} 0 & 0 \\ 1 & 0 \end{bmatrix}, \quad \text{and} \quad h = \begin{bmatrix} 1 & 0 \\ 0 & -1 \end{bmatrix}.\]
Let $\set{x, y, z}$ be the dual basis so that $\slt$, as an affine space, can be identified with $\mathbb A^3$ and has coordinate ring $k[x, y, z]$.  A $2 \times 2$ matrix over a field is nilpotent if and only if its square
\[\begin{bmatrix} z & x \\ y & -z \end{bmatrix}^2 = (xy + z^2)\begin{bmatrix} 1 & 0 \\ 0 & 1 \end{bmatrix}\]
is zero therefore, independent of $p$, we get that $\mathcal N_p(\slt)$ is the zero locus of $xy + z^2$.

By definition $\PG[\slt]$ is the projective variety defined by the homogeneous polynomial $xy + z^2$.  Let $\mathbb P^1$ have coordinate ring $k[s, t]$ and define a map $\iota\colon\mathbb P^1 \to \PG[\slt]$ via $[s, t] \mapsto [s^2 : -t^2 : st]$.  One then checks that the maps $[1 : y : z] \mapsto [1 : z]$ and $[x : 1 : z] \mapsto [-z : 1]$ defined on the open sets $x \neq 0$ and $y \neq 0$, respectively, glue to give an inverse to $\iota$.  Thus $\PG[\slt] \simeq \mathbb P^1$.
\end{Ex}

To define the local Jordan type of a $\mathfrak g$-module $M$, recall that a combinatorial partition $\lambda = (\lambda_1, \lambda_2, \ldots, \lambda_n)$ is a weakly decreasing sequence of finitely many positive integers.  We say that $\lambda$ is a partition of the integer $\sum_i\lambda_i$, for example $\lambda = (4, 4, 2, 1)$ is a partition of $11$.  We call a partition $p$-restricted if no integer in the sequence is greater than $p$ and we let $\mathscr P_p$ denote the set of all $p$-restricted partitions.  We will often write partitions using either Young diagrams or exponential notation.  A Young diagram is a left justified two dimensional array of boxes whose row lengths are weakly decreasing from top to bottom.  These correspond to the partitions obtained by reading off said row lengths.  In exponential notation the unique integers in the partition are written surrounded by brackets with exponents outside the brackets denoting repetition.

\begin{Ex}
The partition $(4, 4, 2, 1)$ can be represented as the Young diagram
\[\ydiagram{4,4,2,1}\]
or written in exponential notation as $[4]^2[2][1]$.
\end{Ex}

If $A \in \mathbb M_n(k)$ is a $p$-nilpotent ($A^p = 0$) $n \times n$ matrix then the Jordan normal form of $A$ is a block diagonal matrix such that each block is of the form
\[\begin{bmatrix} 0 & 1 \\ & 0 & 1 \\ && \ddots & \ddots \\ &&& 0 & 1 \\ &&&& 0 \end{bmatrix} \qquad (\text{an} \ i \times i \ \text{matrix})\]
for some $i \leq p$.  Listing these block sizes in weakly decreasing order yields a $p$-restricted partition of $n$ called the \emph{Jordan type}, $\jtype(A)$, of the matrix $A$.  Note that conjugation does not change the Jordan type of a matrix so if $T\colon V \to V$ is a $p$-nilpotent operator on a vector space $V$ then we define $\jtype(T) = \jtype(A)$, where $A$ is the matrix of $T$ with respect to some basis.  Finally, note that scaling a nilpotent operator does not change the eigenvalues or generalized eigenspaces so it is easy to see that $\jtype(cT) = \jtype(T)$ for any non-zero scalar $c \in k$.

\begin{Def}
Let $M$ be a $\mathfrak g$-module and $v \in \PG$.  Set $\jtype(v, M) = \jtype(x)$ where $x \in \mathcal N_p(\mathfrak g)$ is any non-zero point on the line $v$ and its Jordan type is that of a $p$-nilpotent operator on the vector space $M$.  The \emph{local Jordan type} of $M$ is the function
\[\jtype(-, M)\colon\PG \to \mathscr P_p\]
so defined.
\end{Def}

When computing the local Jordan type of a module the following lemma is useful.  Recall that the conjugate of a partition is the partition obtained by transposing the Young diagram.

\begin{Lem} \label{lemConj}
Let $A \in \mathbb M_n(k)$ be $p$-nilpotent.  The conjugate of the partition
\[(n - \rank A, \rank A - \rank A^2, \ldots, \rank A^{p-2} - \rank A^{p-1}, \rank A^{p-1}).\]
is $\jtype(A)$.
\end{Lem}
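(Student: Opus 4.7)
The plan is to reduce to Jordan normal form and then track ranks of powers block by block.

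First I would observe that conjugation by an invertible matrix preserves both $\rank A^i$ for every $i$ and $\jtype(A)$, so we may assume without loss of generality that $A$ is already in Jordan normal form. Write $\jtype(A) = \lambda = (\lambda_1, \ldots, \lambda_m)$, a $p$-restricted partition of $n$, and let $\lambda'$ denote its conjugate, so that $\lambda'_i = \#\{j : \lambda_j \geq i\}$. The goal becomes to show that the sequence
\[(n - \rank A, \ \rank A - \rank A^2, \ \ldots, \ \rank A^{p-1})\]
is exactly $(\lambda'_1, \lambda'_2, \ldots, \lambda'_p)$, whose conjugate is $(\lambda')' = \lambda = \jtype(A)$.

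The main computation is to evaluate $\rank A^i$. Since $A$ is block-diagonal with Jordan blocks $J_{\lambda_j}$ of sizes $\lambda_1, \ldots, \lambda_m$, we have $\rank A^i = \sum_{j=1}^m \rank J_{\lambda_j}^i$, and an elementary calculation with the shift operator gives $\rank J_k^i = \max(k - i, 0)$. Therefore
\[\rank A^i = \sum_{j=1}^m \max(\lambda_j - i, 0),\]
with the convention $\rank A^0 = n$.

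Taking the difference of consecutive terms yields
\[\rank A^{i-1} - \rank A^i = \sum_{j=1}^m \bigl[\max(\lambda_j - (i-1), 0) - \max(\lambda_j - i, 0)\bigr] = \#\{j : \lambda_j \geq i\} = \lambda'_i,\]
where the middle equality uses that each summand equals $1$ exactly when $\lambda_j \geq i$ and $0$ otherwise. This identifies the listed sequence as $(\lambda'_1, \ldots, \lambda'_p)$, which is automatically weakly decreasing since it comes from a partition, and since every $\lambda_j \leq p$ there are no further nonzero terms to worry about. Conjugating once more gives $\jtype(A)$.

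There is no real obstacle here; the only subtlety is the bookkeeping observation that a single Jordan block of size $k$ has $\rank J_k^i = \max(k-i, 0)$, from which everything else follows by additivity of rank over block direct sums.
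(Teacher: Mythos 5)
Your proof is correct. The paper states \cref{lemConj} without proof, treating it as a standard fact about Jordan form, and your argument is exactly the standard one: reduce to Jordan normal form (ranks and Jordan type are conjugation-invariant), compute $\rank J_k^i = \max(k-i,0)$ for a single nilpotent block, sum over blocks to get $\rank A^i = \sum_j \max(\lambda_j - i, 0)$, and take differences to recover $\lambda'_i = \#\{j : \lambda_j \ge i\}$; conjugating once more returns $\jtype(A)$. The $p$-nilpotence hypothesis is used implicitly in that it guarantees every $\lambda_j \le p$, so the listed sequence of length $p$ captures the entire conjugate partition and the final term $\rank A^{p-1}$ equals $\rank A^{p-1} - \rank A^p$ since $A^p = 0$; you noted this, so there is no gap.
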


\begin{Ex} \label{exPart}
The conjugate of $[4]^2[2][1]$ is $[4][3][2]^2$.
\begin{center}
\begin{picture}(100, 70)(45, 0)
\put(0, 35){\ydiagram{4,4,2,1}}
\put(150, 35){\ydiagram{4,3,2,2}}
\put(75, 37){\vector(1, 0){55}}
\put(-5, 66){\line(1, -1){3}}
\put(0, 61){\line(1, -1){3}}
\put(5, 56){\line(1, -1){3}}
\put(10, 51){\line(1, -1){3}}
\put(15, 46){\line(1, -1){3}}
\put(20, 41){\line(1, -1){3}}
\put(25, 36){\line(1, -1){3}}
\put(30, 31){\line(1, -1){3}}
\put(35, 26){\line(1, -1){3}}
\put(40, 21){\line(1, -1){3}}
\put(45, 16){\line(1, -1){3}}
\put(50, 11){\line(1, -1){3}}
\put(37, 10){\vector(1, 1){15}}
\put(52, 25){\vector(-1, -1){15}}
\end{picture}
\end{center}
\end{Ex}

\begin{Ex} \label{exV2JT}
Assume $p > 2$ and consider the Weyl module $V(2)$, for $\slt$.  This is a $3$-dimensional module where $e$, $f$, and $h$ act via
\[\begin{bmatrix} 0 & 2 & 0 \\ 0 & 0 & 1 \\ 0 & 0 & 0 \end{bmatrix}, \qquad \begin{bmatrix} 0 & 0 & 0 \\ 1 & 0 & 0 \\ 0 & 2 & 0 \end{bmatrix}, \quad \text{and} \quad \begin{bmatrix} 2 & 0 & 0 \\ 0 & 0 & 0 \\ 0 & 0 & -2 \end{bmatrix}\]
respectively.  The matrix
\[A = \begin{bmatrix} 2z & 2x & 0 \\ y & 0 & x \\ 0 & 2y & -2z \end{bmatrix}\]
describes the action of $xe + yf + zh \in \mathfrak g$ on $V(2)$.  For the purposes of computing the local Jordan type we consider $[x : y : z]$ to be an element in the projective space $\PG$.  If $y = 0$ then $xy + z^2 = 0$ implies $z = 0$ and we can scale to $x = 1$.  This immediately gives Jordan type $[3]$.  If $y \neq 0$ then we can scale to $y = 1$ and therefore $x = -z^2$.  This gives
\[A = \begin{bmatrix} 2z & -2z^2 & 0 \\ 1 & 0 & -z^2 \\ 0 & 2 & -2z \end{bmatrix} \quad \text{and} \quad A^2 = \begin{bmatrix} 2z^2 & -4z^3 & 2z^4 \\ 2z & -4z^2 & 2z^3 \\ 2 & -4z & 2z^2 \end{bmatrix}\]
therefore $\rank A = 2$ and $\rank A^2 = 1$.  Using \cref{lemConj} we conclude that the Jordan type is the conjugate of $(3 - 2, 2 - 1, 1) = [1]^3$, which is $[3]$.  Thus the local Jordan type of $V(2)$ is the constant function $v \mapsto [3]$.
\end{Ex}

\begin{Def}
A $\mathfrak g$-module $M$ has \emph{constant Jordan type} if its local Jordan type is a constant function.
\end{Def}

Modules of constant Jordan type will be significant for us for two reasons.  The first is because of the following useful projectivity criterion.

\begin{Thm}[{\cite[7.6]{bfsSupportVarieties}}] \label{thmProjCJT}
A $\mathfrak g$-module $M$ is projective if and only if its local Jordan type is a constant function of the form $v \mapsto [p]^n$.
\end{Thm}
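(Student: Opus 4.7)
The plan is to reduce the statement to the rank-variety characterization of projectivity for $u(\mathfrak g)$-modules. First I would record the basic translation between Jordan type and freeness: for a non-zero $x \in \mathcal N_p(\mathfrak g)$, the cyclic subalgebra $k[x]/(x^p) \subset u(\mathfrak g)$ is a truncated polynomial ring, and a finite dimensional module over this ring decomposes as a direct sum of the modules $k[x]/(x^i)$ for $1 \leq i \leq p$; it is free precisely when all summands are $k[x]/(x^p)$. In terms of $\jtype$, this says $M\big|_{k[x]/(x^p)}$ is free if and only if $\jtype(x, M) = [p]^n$, and in that case $n = (\dim M)/p$. Thus the hypothesis ``local Jordan type is constantly $[p]^n$'' on $\PG$ is exactly the assertion that $M$ is free over $k[x]/(x^p)$ for every non-zero $p$-nilpotent $x$.

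For the forward direction I would invoke the fact that $u(\mathfrak g)$ is free as a right module over each subalgebra $k[x]/(x^p)$. This follows from the PBW theorem for the restricted enveloping algebra once one completes $x$ to a suitable basis of $\mathfrak g$ and uses that $x^p = x^{[p]} = 0$ makes $x$ behave like a truncated polynomial generator. If $M$ is projective over $u(\mathfrak g)$, then restriction along $k[x]/(x^p) \hookrightarrow u(\mathfrak g)$ preserves projectivity, so $M\big|_{k[x]/(x^p)}$ is projective and hence free. By the translation above, $\jtype(v, M) = [p]^{(\dim M)/p}$ for every $v \in \PG$.

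The backward direction is where the real content lies, and my plan is to appeal to the support-variety characterization of projectivity proved by Bendel, Friedlander, and Suslin (and, for Lie algebras, essentially by Friedlander--Parshall): the support variety of $M$ is precisely the closed subvariety of $\mathcal N_p(\mathfrak g)$ consisting of $0$ together with those $x$ for which $M\big|_{k[x]/(x^p)}$ is not free, and $M$ is projective if and only if this variety is trivial. Under the hypothesis of constant Jordan type $[p]^n$, the first paragraph's observation identifies the support variety with $\{0\}$, and projectivity follows immediately.

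The main obstacle is the last invocation: the equivalence between triviality of the rank variety and projectivity is not elementary, but is a cohomological theorem with its proof essentially hidden in \cite{bfsSupportVarieties}. Once that machinery is in hand, the argument reduces to the bookkeeping above; without it, I do not see a direct path from the numerical data of Jordan types to existence of a splitting of some surjection onto $M$.
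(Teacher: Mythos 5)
The paper does not include a proof of this theorem; it is cited directly from Suslin, Friedlander, and Bendel \cite{bfsSupportVarieties}, so there is no internal argument to compare your sketch against. That said, your sketch is the correct and standard reduction. The forward direction---PBW shows the restricted enveloping algebra $u(\mathfrak g)$ is free over each subalgebra $k[x]/(x^p)$ for $x \in \mathcal N_p(\mathfrak g)$ nonzero, restriction along a subalgebra over which the ambient algebra is free preserves projectivity, projective over the local ring $k[x]/(x^p)$ means free, and free means Jordan type $[p]^n$---is elementary and complete. The converse is, as you correctly flag, the substantive half; your appeal to the rank-variety detection theorem (trivial rank variety implies projective) is exactly what one must invoke, and that theorem is precisely the content of the cited result (for restricted Lie algebras one can equally cite Friedlander--Parshall). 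So your argument is sound modulo the same citation the paper itself makes. One minor bookkeeping note worth making explicit: constant Jordan type $[p]^n$ forces $p \mid \dim M$ and pins down $n = \dim M / p$, a normalization you use implicitly in your first paragraph.
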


For the second note that when $\mathfrak g$ is the Lie algebra of an algebraic group $G$, the adjoint action of $G$ on $\mathfrak g$ induces an action of $G$ on the restricted nullcone and hence on $\PG$.  One can show that the local Jordan type of a $G$-module (when considered as a $\mathfrak g$-module) is constant on the $G$-orbits of this action.  The adjoint action of $\SL_2$ on $\PG[\slt]$ is transitive so we get the following.

\begin{Thm}[{\cite[2.5]{cfpConstJType}}] \label{thmRatCJT}
Every rational $\slt$-module has constant Jordan type.
\end{Thm}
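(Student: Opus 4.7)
The plan is to assemble the two ingredients already highlighted in the paragraph immediately preceding the theorem: (i) for a rational $G$-module $M$, the local Jordan type function $v \mapsto \jtype(v,M)$ on $\PG$ is constant on $G$-orbits of the induced adjoint action, and (ii) the adjoint action of $\SL_2$ on $\PG[\slt]$ is transitive. Granting these, for any rational $\slt$-module $M$ (i.e., one coming from a rational $\SL_2$-representation) and any two points $v,v' \in \PG[\slt]$, there is $g \in \SL_2$ with $g\cdot v = v'$, whence $\jtype(v',M) = \jtype(g\cdot v, M) = \jtype(v,M)$, so the local Jordan type function is constant.

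The only substantive piece is transitivity of $\SL_2$ on $\PG[\slt]$. I would argue this concretely. The nullcone $\mathcal N_p(\slt)$ consists of the trace-zero $2\times 2$ matrices $X$ with $X^2=0$, i.e., the nonzero nilpotent traceless matrices together with $0$. By Jordan normal form any such nonzero $X$ is $\GL_2$-conjugate to the standard nilpotent $e$; scaling the conjugator by a suitable element of $k^\times$ makes its determinant equal to $1$ and rescales the image by a nonzero constant, producing $g \in \SL_2$ with $g X g^{-1} \in k^\times e$. Passing to lines in $\mathcal N_p(\slt)$ shows that every point of $\PG[\slt]$ lies in the $\SL_2$-orbit of $[e]$, so the action is transitive. (Alternatively, one can identify the map $\SL_2 \to \PG[\slt]$, $g\mapsto g\cdot[e]$, with the quotient $\SL_2/B \to \mathbb P^1$ via the isomorphism $\PG[\slt]\simeq\mathbb P^1$ constructed in \Cref{exNslt}.)

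The main obstacle, such as it is, is not technical: the nontrivial content is bundled into fact (i), which is taken from the literature and not proved in this excerpt, together with the classical transitivity statement (ii). Once both are in hand the theorem follows in one line, as above, and nothing beyond the explicit description of $\mathcal N_p(\slt)$ from \Cref{exNslt} is needed.
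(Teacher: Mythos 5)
Your proposal is correct and is essentially the paper's own argument: the paragraph immediately before \cref{thmRatCJT} supplies the two ingredients you name—constancy of the local Jordan type on $G$-orbits and transitivity of the adjoint $\SL_2$-action on $\PG[\slt]$—and cites the result to Carlson–Friedlander–Pevtsova rather than proving it in full. One tiny inaccuracy in your transitivity argument: replacing the conjugator $g$ by $cg$ leaves $gXg^{-1}$ unchanged, so after choosing $c$ with $c^2\det g=1$ (possible since $k$ is algebraically closed) you land on $gXg^{-1}=e$ exactly, not merely in $k^\times e$; this does not affect the conclusion.
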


Next we define the global operator associated to a $\mathfrak g$-module $M$ and the sheaves associated to such an operator.  Let $\set{g_1, \ldots, g_n}$ be a basis for $\mathfrak g$ with corresponding dual basis $\set{x_1, \ldots, x_n}$.  We define $\Theta_{\mathfrak g}$ to be the operator
\[\Theta_{\mathfrak g} = x_1 \otimes g_1 + \cdots + x_n \otimes g_n.\]
As an element of $\mathfrak g^\ast \otimes_k \mathfrak g \simeq \hom_k(\mathfrak g, \mathfrak g)$ this is just the identity map and is therefore independent of the choice of basis.  Now $\Theta_{\mathfrak g}$ acts on $k[\mathcal N_p(\mathfrak g)] \otimes_k M \simeq k[\mathcal N_p(\mathfrak g)]^{\dim M}$ as a degree $1$ endomorphism of graded $k[\mathcal N_p(\mathfrak g)]$-modules (where $\deg x_i = 1$).  The map of sheaves corresponding to this homomorphism is the global operator.

\begin{Def}
Given a $\mathfrak g$-module $M$ we define $\widetilde M = \OPG \otimes_k M$.  The \emph{global operator} corresponding to $M$ is the sheaf map
\[\Theta_M\colon \widetilde M \to \widetilde M(1)\]
induced by the action of $\Theta_{\mathfrak g}$.
\end{Def}

\begin{Ex} \label{exV2glob}
We have $\Theta_{\slt} = x \otimes e + y \otimes f + z \otimes h$.  Consider the Weyl module $V(2)$ from \cref{exV2JT}.  The global operator corresponding to $V(2)$ is the sheaf map
\[\begin{bmatrix} 2z & 2x & 0 \\ y & 0 & x \\ 0 & 2y & -2z \end{bmatrix}\colon\OPG[\slt]^3 \to \OPG[\slt](1)^3.\]
Taking the pullback through the map $\iota\colon\mathbb P^1 \to \PG[\slt]$ from \cref{exNslt} we get that $\Theta_{V(2)}$ is the sheaf map
\[\begin{bmatrix} 2st & 2s^2 & 0 \\ -t^2 & 0 & s^2 \\ 0 & -2t^2 & -2st \end{bmatrix}\colon\OP^3 \to \OP(2)^3.\]
\end{Ex}

The global operator $\Theta_M$ is not an endomorphism but we may still compose it with itself if we shift the degree of successive copies.  Given $j \in \mathbb N$ we define
\begin{align*}
\gker[j]{M} &= \ker\left[\Theta_M(j-1)\circ\cdots\circ\Theta_M(1)\circ\Theta_M\right], \\
\gim[j]{M} &= \im\left[\Theta_M(-1)\circ\cdots\circ\Theta_M(1-j)\circ\Theta_M(-j)\right], \\
\gcoker[j]{M} &= \coker\left[\Theta_M(-1)\circ\cdots\circ\Theta_M(1-j)\circ\Theta_M(-j)\right],
\end{align*}
so that $\gker[j]{M}$ and $\gim[j]{M}$ are subsheafs of $\widetilde M$, and $\gcoker[j]{M}$ is a quotient of $\widetilde M$.

To see how these sheaves encode information about the Jordan type of $M$ recall that the $j$-rank of a partition $\lambda$ is the number of boxes in the Young diagram of $\lambda$ that are not contained in the first $j$ columns.  For example the $2$-rank of $[4]^2[2][1]$ (from \cref{exPart}) is $4$.  If one knows the $j$-rank of a partition $\lambda$ for all $j$, then one knows the size of each column in the Young diagram of $\lambda$ and can therefore recover $\lambda$.  Thus if one knows the local $j$-rank of a module $M$ for all $j$ then one knows its local Jordan type.

\begin{Def}
Let $M$ be a $\mathfrak g$-module and let $v \in \PG$.  Set $\rank^j(v, M)$ equal to the $j$-rank of the partition $\jtype(v, M)$.  The \emph{local $j$-rank} of $M$ is the function
\[\rank^j(-, M)\colon\PG \to \mathbb N_0\]
so defined.
\end{Def}

\begin{Thm}[{\cite[3.2]{starkHo1}}]
Let $M$ be a $\mathfrak g$-module and $U \subseteq \PG$ an open set.  The local $j$-rank is constant on $U$ if and only if the restriction $\gcoker[j]{M}|_U$ is a locally free sheaf.  When this is the case $\gker[j]{M}|_U$ and $\gim[j]{M}|_U$ are also locally free and $\rank^j(v, M) = \rank\gim[j]{M}$ for all $v \in U$.
\end{Thm}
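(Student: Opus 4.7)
The plan is to identify the three sheaves in the statement with the kernel, image, and cokernel of the single $j$-fold composition
\[ \Theta_M^{[j]} := \Theta_M(-1) \circ \cdots \circ \Theta_M(-j) \colon \widetilde M(-j) \to \widetilde M, \]
a morphism between locally free sheaves of rank $\dim M$ on $\PG$. Indeed $\gim[j]{M} = \im \Theta_M^{[j]}$ and $\gcoker[j]{M} = \coker \Theta_M^{[j]}$ by definition, while $\ker \Theta_M^{[j]} = \gker[j]{M}(-j)$ (the definition of $\gker[j]{M}$ uses the composition $\widetilde M \to \widetilde M(j)$, which differs from $\Theta_M^{[j]}$ only by a Serre twist). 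The key preliminary computation is the fiber of $\Theta_M^{[j]}$ at a point $v \in \PG$: after trivializing $\OPG(-j)$ near $v$ by picking a representative $x = x_1 g_1 + \cdots + x_n g_n \in \mathcal N_p(\mathfrak g)$ of the line $v$, the fiber map becomes the linear endomorphism $x^j|_M$ of $M$. Its rank equals $\sum_i \max(\lambda_i - j, 0)$ over the Jordan block sizes $\lambda_i$ of $x|_M$, which is by definition the $j$-rank of $\jtype(v, M)$, i.e.\ $\rank^j(v, M)$.

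The first biconditional then follows from the standard fiberwise criterion for local freeness of cokernels: for any morphism $\varphi \colon E \to F$ between locally free coherent sheaves on a Noetherian scheme, the function
\[ v \longmapsto \dim_{k(v)}\bigl(\coker(\varphi) \otimes k(v)\bigr) = \dim_{k(v)}\bigl(F \otimes k(v)\bigr) - \rank \varphi_v \]
is upper semicontinuous, and $\coker(\varphi)|_U$ is locally free iff this function is locally constant on $U$ (equivalently, since $F$ is locally free, iff $\rank \varphi_v$ is locally constant on $U$). Applied to $\varphi = \Theta_M^{[j]}$ with the fiber computation, this yields the equivalence between constancy of $\rank^j(-, M)$ on $U$ and local freeness of $\gcoker[j]{M}|_U$.

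Assuming the $j$-rank is constant on $U$, local freeness of $\gim[j]{M}|_U$ and $\gker[j]{M}|_U$ follows from the two short exact sequences
\[ 0 \to \gim[j]{M} \to \widetilde M \to \gcoker[j]{M} \to 0 \quad \text{and} \quad 0 \to \gker[j]{M}(-j) \to \widetilde M(-j) \to \gim[j]{M} \to 0 \]
on $U$, together with the general fact that the kernel of a surjection between locally free sheaves is locally free. Finally, the rank of $\gim[j]{M}|_U$ equals the dimension of any fiber, which is $\rank(\Theta_M^{[j]})_v = \rank^j(v, M)$. The only genuinely delicate point of the argument is the twist bookkeeping relating the two incarnations of the kernel ($\gker[j]{M} \subset \widetilde M$ versus $\ker \Theta_M^{[j]} \subset \widetilde M(-j)$), but tensoring with the invertible sheaf $\OPG(-j)$ preserves local freeness and rank, so this presents no real obstacle.
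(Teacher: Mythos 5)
This paper cites the theorem from \cite{starkHo1} without reproducing a proof, so there is no in-paper argument to compare against; I am evaluating your proposal on its own.

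Your argument is correct and is the natural one: reduce everything to the single morphism $\Theta_M^{[j]}\colon \widetilde M(-j)\to\widetilde M$ of locally free sheaves, compute its fiber at $v$ to be $x^j|_M$, observe that $\rank x^j|_M = \sum_i\max(\lambda_i-j,0)$ is exactly the $j$-rank of $\jtype(v,M)$, and then apply the fiberwise criterion for local freeness of the cokernel. The twist bookkeeping $\ker\Theta_M^{[j]}=\gker[j]{M}(-j)$ is correct, the two short exact sequences do yield local freeness of $\gim[j]{M}$ and $\gker[j]{M}$ via local splitting (since the respective quotients are already known to be locally free at each stage), and the final rank identification is right.

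One hypothesis you use silently is worth making explicit: the criterion ``$\coker\varphi$ is locally free on $U$ iff its fiber-dimension function is locally constant on $U$'' requires the ambient scheme to be \emph{reduced}, not merely Noetherian. (On a non-reduced scheme a coherent sheaf can have constant fiber dimension without being locally free.) This is satisfied here because $\mathcal N_p(\mathfrak g)$ is an irreducible variety, so $\PG$ is an integral projective scheme; but the step as written asserts the criterion for an arbitrary Noetherian scheme, which is false. Also, you write ``constant on $U$'' and ``locally constant on $U$'' interchangeably; since $\PG$ is irreducible every nonempty open $U$ is connected, so these agree, but that is again a fact about $\PG$ and not about a general Noetherian scheme. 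With those two small caveats spelled out, the proof is complete.
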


We will also be interested in the sheaves $\F_i(M)$ for $1 \leq i \leq p$.  These were first defined by Benson and Pevtsova \cite{benPevtVectorBundles} for $kE$-modules where $E$ is an elementary abelian $p$-group.

\begin{Def}
Let $M$ be a $\mathfrak g$-module and $1 \leq i \leq p$ an integer.  Then
\[\F_i(M) = \frac{\gker{M} \cap \gim[i-1]{M}}{\gker{M} \cap \gim[i]{M}}.\]
\end{Def}

We end the section by stating two theorems which not only illustrate the utility of these sheaves but will be used in an essential way in \Cref{secLieEx} when calculating $\F_i(M)$ where $M$ is a Weyl module for $\slt$.  Both theorems were originally published by Benson and Pevtsova \cite{benPevtVectorBundles} but with minor errors.  These errors have been corrected in the given reference.

\begin{Thm}[{\cite[3.7]{starkHo1}}] \label{thmOm}
Let $M$ be a $\mathfrak g$-module and $1 \leq i < p$ an integer.  Then
\[\F_i(M) \simeq \F_{p-i}(\Omega M)(p-i)\]
\end{Thm}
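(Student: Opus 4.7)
The plan is to exploit the defining short exact sequence of the Heller shift
\[0 \to \Omega M \to P \to M \to 0\]
where $P$ is a projective cover of $M$. Tensoring over $k$ with $\OPG$ preserves exactness and produces a short exact sequence of sheaves on $\PG$, and the global operators are compatible with the resulting maps. The strategy has three stages: vanish $\F_i$ on the middle projective term, use a snake-lemma argument to swap $M$ with $\Omega M$, and track the degree shifts that produce the twist by $(p-i)$.

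The first step is to establish $\F_i(P) = 0$ for projective $P$ and $1 \leq i \leq p - 1$. By \Cref{thmProjCJT}, such a $P$ has constant Jordan type $[p]^n$. Locally a matrix representing $\Theta_P$ is a direct sum of $n$ Jordan blocks of size $p$, so $\gker{P} = \gim[p-1]{P}$ pointwise, and by the theorem of Friedlander–Pevtsova quoted above both sides are locally free sheaves on $\PG$; hence the equality holds as subsheaves of $\widetilde P$. Because $\gim[p-1]{P} \subseteq \gim[i]{P} \subseteq \gim[i-1]{P}$ for $1 \leq i \leq p-1$, both terms in the definition of $\F_i(P)$ equal $\gker{P}$, giving $\F_i(P) = 0$.

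The second step is to apply the snake lemma to the short exact sequence together with the vertical maps induced by iterated powers of $\Theta$. Compatibility of $\Theta$ with the maps $\Omega M \hookrightarrow P \twoheadrightarrow M$ yields, for each $j$, a commutative diagram with exact rows comparing $\Theta^j_{\Omega M}$, $\Theta^j_P$, and $\Theta^j_M$. The connecting morphisms identify kernel conditions on $M$ with image conditions on $\Omega M$ (and vice versa), because inside the projective $P$ the sheaves $\gker[j]{P}$ and $\gim[p-j]{P}$ coincide by step one. Intersecting the resulting identifications with $\gim[i-1]{M}$ and $\gim[i]{M}$, and taking the quotient, produces a natural isomorphism between $\F_i(M)$ and the corresponding quotient built from $\Omega M$ at the complementary index $p - i$.

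The main obstacle is tracking the degree twists precisely. Each application of $\Theta$ raises the degree by $1$, and the snake-lemma identification replaces a block of $i$ powers of $\Theta_M$ by $p - i$ powers of $\Theta_{\Omega M}$; this asymmetry is the source of the $\OPG(p-i)$ twist on the right-hand side. Verifying that every natural map in the diagram chase respects the correct twist, and that the final quotient lands in $\F_{p-i}(\Omega M)(p-i)$ rather than some other shift, is the delicate part of the proof; everything else is essentially formal homological algebra in the category of coherent sheaves on $\PG$.
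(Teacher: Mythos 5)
Note first that the paper itself does not prove this theorem: it is cited as \cite[3.7]{starkHo1} (a correction of Benson and Pevtsova), so there is no in-paper argument to compare against. Evaluating your plan on its own merits: the skeleton is sound. Showing $\F_i(P)=0$ for projective $P$ and $1\le i\le p-1$ is exactly the right leverage, and your argument for it is essentially correct, though you should be explicit that the equality $\gker{P}=\gim[p-1]{P}$ is not automatic from pointwise agreement of fibers; what actually does it is the containment $\gim[p-1]{P}\subseteq\gker{P}$ coming from $\Theta_P^p=0$ (which holds because the entries of $\Theta_P^p$ are polynomials vanishing at every closed point over an algebraically closed field), together with both sheaves being locally free of the same rank so that the quotient is torsion with vanishing fibers. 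You also quietly upgrade this to $\gker[j]{P}=\gim[p-j]{P}$ for all $j$ in the next step, which needs the same argument.

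The real gap is the middle step, which is the heart of the proof. The snake lemma applied with vertical maps $\Theta^j$ produces connecting morphisms $\delta_j\colon\gker[j]{M}\to\gcoker[j]{\Omega M}(j)$, but these are not maps between the sheaves $\F_i$, which are quotients of intersections $\gker{}\cap\gim[\,\cdot\,]{}$ rather than kernels of a single power. Your phrase "intersecting the resulting identifications with $\gim[i-1]{M}$ and $\gim[i]{M}$" does not pin down a well-defined map: if you take $m\in\gker{M}\cap\gim[i-1]{M}$, lift it to $\tilde m\in\widetilde P$, and apply $\delta_1$, the obvious choice $\tilde m=\Theta_P^{i-1}\tilde n$ for a lift $\tilde n$ of a preimage $n$ makes $\Theta_P^{p-1}\tilde m=\Theta_P^{p+i-2}\tilde n$ vanish outright for $i\ge 2$. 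The correct construction sends $m=\Theta_M^{i-1}(n)$ to $\Theta_P^{p-1}(\tilde n)$ (giving the twist by $(p-1)-(i-1)=p-i$), and one must then verify that this is independent of the choices of $n$ and $\tilde n$ modulo the correct subsheaf, that it kills $\gker{M}\cap\gim[i]{M}$, that its image lands in $(\gker{\Omega M}\cap\gim[p-i-1]{\Omega M})(p-i)$ modulo $(\gker{\Omega M}\cap\gim[p-i]{\Omega M})(p-i)$, and that the induced map of quotients is an isomorphism. None of this is "essentially formal"; it is precisely where the content of the theorem lies, and it is also where Benson and Pevtsova's original argument went wrong and needed repair, so a careful treatment is essential rather than optional.
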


\begin{Thm}[{\cite[3.8]{starkHo1}}] \label{thmFi}
Let $U \subseteq \PG$ be open.  The local Jordan type of a $\mathfrak g$-module $M$ is constant on $U$ if and only if the restrictions $\F_i(M)|_U$ are locally free for all $1 \leq i \leq p$.  When this is the case and $a_i = \rank\F_i(M)$ we have $\jtype(v, M) = [p]^{a_p}[p-1]^{a_{p-1}}\cdots[1]^{a_1}$ for all $v \in U$.
\end{Thm}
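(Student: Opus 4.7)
The plan is to relate $\F_i(M)$ to the simpler sheaves $\gker[j]{M}$, $\gim[j]{M}$, and $\gcoker[j]{M}$ via short exact sequences, and then invoke the preceding theorem, which equates constancy of local $j$-rank on $U$ with local freeness of these sheaves. The key pointwise computation is as follows. At a point $v \in \PG$ where $\jtype(v, M) = [p]^{b_p(v)}\cdots[1]^{b_1(v)}$, pick a Jordan basis for $\Theta_M|_v$. Each Jordan block of size $j$ contributes a one-dimensional subspace to $\ker \Theta_v$, and that contribution lies in $\operatorname{im}\Theta_v^{i-1}$ precisely when $j \geq i$. Hence $\dim(\ker\Theta_v \cap \operatorname{im}\Theta_v^{i-1}) = \sum_{j \geq i} b_j(v)$, so the pointwise analogue of $\F_i(M)|_v$ has dimension $b_i(v)$.

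For the forward direction, I would assume the local Jordan type is constant on $U$. Since the Jordan type is recovered from all the local $j$-ranks, these are constant on $U$, so by the preceding theorem all the sheaves $\gker[j]{M}|_U$ and $\gim[j]{M}|_U$ are locally free. The short exact sequence
\begin{equation*}
0 \to \gker[i-1]{M} \to \gker[i]{M} \xrightarrow{\Theta_M^{i-1}} (\gker{M} \cap \gim[i-1]{M})(i-1) \to 0
\end{equation*}
(with exactness verified by identifying $\Theta_M^{i-1}(\gker[i]{M})$ with $\gker{M} \cap \gim[i-1]{M}$ up to twist) shows that $\gker{M} \cap \gim[i-1]{M}$ is locally free on $U$, since extensions of locally free sheaves by locally free sheaves split locally. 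Applying the analogous argument to $\gker{M} \cap \gim[i]{M}$ and invoking the pointwise computation yields $\F_i(M)|_U$ locally free of rank $b_i$.

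For the reverse direction, I would assume each $\F_i(M)|_U$ is locally free of rank $a_i$. The filtration
\begin{equation*}
\gker{M}|_U \supseteq (\gker{M} \cap \gim{M})|_U \supseteq \cdots \supseteq (\gker{M} \cap \gim[p]{M})|_U = 0
\end{equation*}
(whose final term vanishes because $\Theta_M^p = 0$) has successive quotients $\F_i(M)|_U$, so induction downwards shows each $(\gker{M} \cap \gim[i]{M})|_U$ is locally free. Running the exact sequence from the forward direction in reverse and inducting upwards on $i$ then yields $\gker[i]{M}|_U$ locally free for all $i$, and the further short exact sequences $0 \to \gker[i]{M} \to \widetilde M \to \gim[i]{M}(i) \to 0$ and $0 \to \gim[i]{M} \to \widetilde M \to \gcoker[i]{M} \to 0$ propagate local freeness to $\gim[i]{M}|_U$ and $\gcoker[i]{M}|_U$. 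The preceding theorem then gives constant local $j$-rank on $U$ for every $j$, hence constant Jordan type; and the forward direction, already proved, forces $b_i = a_i$.

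The main technical obstacle is the careful bookkeeping of this chain of short exact sequences and the systematic use of the fact that an extension of locally free sheaves by locally free sheaves is locally free. This is what moves local freeness between the various sheaves in play and ultimately ties the sheaf-theoretic data $\F_i(M)|_U$ back to the combinatorial Jordan invariants $b_i(v)$.
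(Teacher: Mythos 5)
The paper cites this result from \cite[3.8]{starkHo1} without giving a proof, so there is no in-text argument to compare against; I will evaluate your proposal on its own terms.

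There is a genuine gap, and it occurs at a step you rely on in both directions: you use, in effect, the claim that in a short exact sequence $0 \to A \to B \to C \to 0$ of coherent sheaves with $A$ and $B$ locally free, the quotient $C$ is locally free. That is false --- the standard counterexample is multiplication by a coordinate, $0 \to \mathcal O_{\mathbb P^1}(-1) \to \mathcal O_{\mathbb P^1} \to C \to 0$, where $C$ is a skyscraper. What is true (and is what ``extensions split locally'' gives you) is the converse: if $A$ and $C$ are locally free then $B$ is. In the forward direction you invoke the false version to conclude that $\gker{M}\cap\gim[i-1]{M}$, appearing as the cokernel in your sequence $0 \to \gker[i-1]{M} \to \gker[i]{M} \to (\gker{M}\cap\gim[i-1]{M})(i-1) \to 0$, is locally free. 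This step is repairable: under the constant-Jordan-type hypothesis the fiber dimension of that cokernel is constant (your pointwise computation essentially supplies the number), and a coherent sheaf with locally constant fiber dimension over a reduced Noetherian scheme is locally free. However, in the reverse direction the same false implication is load-bearing and cannot be repaired the same way: you ``propagate local freeness'' from $\gker[i]{M}$ to $\gim[i]{M}$ and then to $\gcoker[i]{M}$, but local freeness of the kernel of a map of vector bundles does not give local freeness of the image or cokernel, and here you have no independent constancy of fiber rank to fall back on --- constancy of the $j$-rank is precisely what you are trying to establish in order to invoke the cited Theorem~3.2. A related unaddressed point: local freeness of the subsheaf $\gker{M}\cap\gim[i]{M}\subseteq\widetilde M$ does not automatically make its rank equal to the pointwise quantity $\dim(\ker\Theta_v\cap\operatorname{im}\Theta_v^{i})$; one needs the inclusion to be fiber-wise injective (a subbundle condition), as already illustrated by $t\colon\mathcal O_{\mathbb A^1}\to\mathcal O_{\mathbb A^1}$, whose kernel sheaf is $0$ while $\dim\ker(t_v)$ jumps at $v=0$. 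Some genuinely new idea is needed to close the reverse implication.
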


\section{The category of $\slt$-modules} \label{secSl2}

The calculations in \Cref{secLieEx} will be based on detailed information about the category of $\slt$-modules, which we develop in this section.  The indecomposable $\slt$-modules have been classified, each is one of the following four types: a Weyl module $V(\lambda)$, the dual of a Weyl module $V(\lambda)^\ast$, an indecomposable projective module $Q(\lambda)$, or a non-constant module $\Phi_\xi(\lambda)$.  Explicit bases for the first three types are known; we will remind the reader of these formulas and develop similar formulas for the $\Phi_\xi(\lambda)$.  We will also calculate the local Jordan type $\jtype(-, M)\colon\mathbb P^1 \to \mathscr P_p$ for each indecomposable $M$.  Finally we will calculate the Heller shifts $\Omega(V(\lambda))$.

We begin by stating the results for each of the four types and the classification.  Recall the standard basis for $\slt$ is $\set{e, f, h}$ where
\[e = \begin{bmatrix} 0 & 1 \\ 0 & 0 \end{bmatrix}, \qquad f = \begin{bmatrix} 0 & 0 \\ 1 & 0 \end{bmatrix}, \quad \text{and} \quad h = \begin{bmatrix} 1 & 0 \\ 0 & -1 \end{bmatrix}.\]
Let $\lambda$ be a non-negative integer and write $\lambda = rp + a$ where $0 \leq a < p$ is the remainder of $\lambda$ modulo $p$.  Each type is parametrized by the choice of $\lambda$, with the parametrization of $\Phi_\xi(\lambda)$ requiring also a choice of $\xi \in \mathbb P^1$.  The four types are as follows:

\begin{itemize}
\item The {\bf Weyl modules} $V(\lambda)$.
\begin{center}
\begin{tabular}{rrl}
Basis: & \multicolumn{2}{l}{$\set{v_0, v_1, \ldots, v_\lambda}$} \hspace{150pt} \\
Action: & $ev_i$ & \hspace{-7pt}$= (\lambda - i + 1)v_{i - 1}$ \\
& $fv_i$ & \hspace{-7pt}$= (i + 1)v_{i + 1}$ \\
& $hv_i$ & \hspace{-7pt}$= (\lambda - 2i)v_i$ \\
Graph: & \multicolumn{2}{l}{\Cref{figV}} \\
Local Jordan type: & \multicolumn{2}{l}{Constant Jordan type $[p]^r[a + 1]$}
\end{tabular}
\end{center}
\begin{sidewaysfigure}[p]
\centering
\vspace*{350pt}
\begin{tikzpicture}	[description/.style={fill=white,inner sep=2pt}]
\useasboundingbox (-7, -5.5) rectangle (7, 4.2);
\scope[transform canvas={scale=.8}]
	\matrix (m) [matrix of math nodes, row sep=31pt,
	column sep=40pt, text height=1.5ex, text depth=0.25ex]
	{ \\ \\ \\ \underset{v_0}{\bullet} & \underset{v_1}{\bullet} & \underset{v_2}{\bullet} & \underset{v_3}{\bullet} & \cdots & \underset{v_{\lambda - 3}}{\bullet} & \underset{v_{\lambda - 2}}{\bullet} & \underset{v_{\lambda - 1}}{\bullet} & \underset{v_\lambda}{\bullet} \\ \\ \\ \\ \\ \\ \\ \\ \underset{\hat v_0}{\bullet} & \underset{\hat v_1}{\bullet} & \underset{\hat v_2}{\bullet} & \underset{\hat v_3}{\bullet} & \cdots & \underset{\hat v_{\lambda - 3}}{\bullet} & \underset{\hat v_{\lambda - 2}}{\bullet} & \underset{\hat v_{\lambda - 1}}{\bullet} & \underset{\hat v_\lambda}{\bullet} \\};
	\path[->,font=\scriptsize]
	(m-4-1) edge [bend left=20] node[auto] {$1$} (m-4-2)
	(m-4-2) edge [bend left=20] node[auto] {$\lambda$} (m-4-1)
			edge [bend left=20] node[auto] {$2$} (m-4-3)
	(m-4-3) edge [bend left=20] node[auto] {$\lambda - 1$} (m-4-2)
			edge [bend left=20] node[auto] {$3$} (m-4-4)
	(m-4-4) edge [bend left=20] node[auto] {$\lambda - 2$} (m-4-3)
			edge [bend left=20] node[auto] {$4$} (m-4-5)
	(m-4-5) edge [bend left=20] node[auto] {$\lambda - 3$} (m-4-4)
			edge [bend left=20] node[auto] {$\lambda - 3$} (m-4-6)
	(m-4-6) edge [bend left=20] node[auto] {$4$} (m-4-5)
			edge [bend left=20] node[auto] {$\lambda - 2$} (m-4-7)
	(m-4-7) edge [bend left=20] node[auto] {$3$} (m-4-6)
			edge [bend left=20] node[auto] {$\lambda - 1$} (m-4-8)
	(m-4-8) edge [bend left=20] node[auto] {$2$} (m-4-7)
			edge [bend left=20] node[auto] {$\lambda$} (m-4-9)
	(m-4-9) edge [bend left=20] node[auto] {$1$} (m-4-8);
	\draw[<-] (m-4-1) .. controls +(70:50pt) and +(110:50pt) .. node[pos=.5, above]{\scriptsize $\lambda$} (m-4-1);
	\draw[<-] (m-4-2) .. controls +(70:50pt) and +(110:50pt) .. node[pos=.5, above]{\scriptsize $\lambda - 2$} (m-4-2);
	\draw[<-] (m-4-3) .. controls +(70:50pt) and +(110:50pt) .. node[pos=.5, above]{\scriptsize $\lambda - 4$} (m-4-3);
	\draw[<-] (m-4-4) .. controls +(70:50pt) and +(110:50pt) .. node[pos=.5, above]{\scriptsize $\lambda - 6$} (m-4-4);
	\draw[<-] (m-4-6) .. controls +(70:50pt) and +(110:50pt) .. node[pos=.5, above]{\scriptsize $6 - \lambda$} (m-4-6);
	\draw[<-] (m-4-7) .. controls +(70:50pt) and +(110:50pt) .. node[pos=.5, above]{\scriptsize $4 - \lambda$} (m-4-7);
	\draw[<-] (m-4-8) .. controls +(70:50pt) and +(110:50pt) .. node[pos=.5, above]{\scriptsize $2 - \lambda$} (m-4-8);
	\draw[<-] (m-4-9) .. controls +(70:50pt) and +(110:50pt) .. node[pos=.5, above]{\scriptsize $-\lambda$} (m-4-9);
	\path[draw] (-4.1, -2) rectangle (3.7, 0);
	\draw (-3.5, -1) node {$e$:};
	\draw[<-] (-3.2, -1) .. controls +(-20:18pt) and +(200:18pt) .. (-1.7, -1);
	\draw (-.5, -1) node {$f$:};
	\draw[->] (-.2, -1) .. controls +(20:18pt) and +(160:18pt) .. (1.3, -1);
	\draw (2.5, -1) node {$h$:};
	\draw[<-] (3.1, -1.5) .. controls +(70:40pt) and +(110:40pt) ..  (2.9, -1.5);
	\draw (-.5, 5) node {$V(\lambda)$};
	\path[->,font=\scriptsize]
	(m-12-1) edge [bend left=20] node[auto] {$\lambda$} (m-12-2)
	(m-12-2) edge [bend left=20] node[auto] {$1$} (m-12-1)
			edge [bend left=20] node[auto] {$\lambda - 1$} (m-12-3)
	(m-12-3) edge [bend left=20] node[auto] {$2$} (m-12-2)
			edge [bend left=20] node[auto] {$\lambda - 2$} (m-12-4)
	(m-12-4) edge [bend left=20] node[auto] {$3$} (m-12-3)
			edge [bend left=20] node[auto] {$\lambda - 3$} (m-12-5)
	(m-12-5) edge [bend left=20] node[auto] {$4$} (m-12-4)
			edge [bend left=20] node[auto] {$4$} (m-12-6)
	(m-12-6) edge [bend left=20] node[auto] {$\lambda - 3$} (m-12-5)
			edge [bend left=20] node[auto] {$3$} (m-12-7)
	(m-12-7) edge [bend left=20] node[auto] {$\lambda - 2$} (m-12-6)
			edge [bend left=20] node[auto] {$2$} (m-12-8)
	(m-12-8) edge [bend left=20] node[auto] {$\lambda - 1$} (m-12-7)
			edge [bend left=20] node[auto] {$1$} (m-12-9)
	(m-12-9) edge [bend left=20] node[auto] {$\lambda$} (m-12-8);
	\draw[<-] (m-12-1) .. controls +(70:50pt) and +(110:50pt) .. node[pos=.5, above]{\scriptsize $\lambda$} (m-12-1);
	\draw[<-] (m-12-2) .. controls +(70:50pt) and +(110:50pt) .. node[pos=.5, above]{\scriptsize $\lambda - 2$} (m-12-2);
	\draw[<-] (m-12-3) .. controls +(70:50pt) and +(110:50pt) .. node[pos=.5, above]{\scriptsize $\lambda - 4$} (m-12-3);
	\draw[<-] (m-12-4) .. controls +(70:50pt) and +(110:50pt) .. node[pos=.5, above]{\scriptsize $\lambda - 6$} (m-12-4);
	\draw[<-] (m-12-6) .. controls +(70:50pt) and +(110:50pt) .. node[pos=.5, above]{\scriptsize $6 - \lambda$} (m-12-6);
	\draw[<-] (m-12-7) .. controls +(70:50pt) and +(110:50pt) .. node[pos=.5, above]{\scriptsize $4 - \lambda$} (m-12-7);
	\draw[<-] (m-12-8) .. controls +(70:50pt) and +(110:50pt) .. node[pos=.5, above]{\scriptsize $2 - \lambda$} (m-12-8);
	\draw[<-] (m-12-9) .. controls +(70:50pt) and +(110:50pt) .. node[pos=.5, above]{\scriptsize $-\lambda$} (m-12-9);
	\draw (-.5, -4) node {$V(\lambda)^\ast$};
\endscope
\end{tikzpicture}
\caption{Graphs of $V(\lambda)$ and $V(\lambda)^\ast$} \label{figV}
\end{sidewaysfigure}
\item The {\bf dual Weyl modules} $V(\lambda)^\ast$.
\begin{center}
\begin{tabular}{rrl}
Basis: & \multicolumn{2}{l}{$\set{\hat v_0, \hat v_1, \ldots, \hat v_\lambda}$} \hspace{150pt} \\
Action: & $e\hat v_i$ & \hspace{-7pt}$= i\hat v_{i - 1}$ \\
& $f\hat v_i$ & \hspace{-7pt}$= (\lambda - i)\hat v_{i + 1}$ \\
& $h\hat v_i$ & \hspace{-7pt}$= (\lambda - 2i)\hat v_i$ \\
Graph: & \multicolumn{2}{l}{\Cref{figV}} \\
Local Jordan type: & \multicolumn{2}{l}{Constant Jordan type $[p]^r[a + 1]$}
\end{tabular}
\end{center}
\item The {\bf projectives} $Q(\lambda)$.

Define $Q(p - 1) = V(p - 1)$.  For $0 \leq \lambda < p - 1$ we define $Q(\lambda)$ via
\begin{center}
\begin{tabular}{rrl}
Basis: & \multicolumn{2}{l}{$\set{v_0, v_1, \ldots, v_{2p - \lambda - 2}} \cup \set{w_{p - \lambda - 1}, w_{p - \lambda}, \ldots, w_{p - 1}}$} \hspace{0pt} \\
Action: & $ev_i$ & \hspace{-7pt}$= -(\lambda + i + 1)v_{i - 1}$ \\
& $fv_i$ & \hspace{-7pt}$= (i + 1)v_{i + 1}$ \\
& $hv_i$ & \hspace{-7pt}$= -(\lambda + 2i + 2)v_i$ \\
& $ew_i$ & \hspace{-7pt}$= -(\lambda + i + 1)w_{i - 1} + \frac{1}{i}v_{i - 1}$ \\
& $fw_i$ & \hspace{-7pt}$= (i + 1)w_{i + 1} - \frac{1}{\lambda + 1}\delta_{-1, i}v_p$ \\
& $hw_i$ & \hspace{-7pt}$= -(\lambda + 2i + 2)w_i$ \\
Graph: & \multicolumn{2}{l}{\Cref{figQ}} \\
Local Jordan type: & \multicolumn{2}{l}{Constant Jordan type $[p]^2$}
\end{tabular}
\end{center}
\begin{sidewaysfigure}[p]
\centering
\vspace*{350pt}
\begin{tikzpicture}	[description/.style={fill=white,inner sep=2pt}]
\useasboundingbox (-8.5, -5) rectangle (8.5, 3.5);
\scope[transform canvas={scale=.8}]
	\matrix (m) [matrix of math nodes, row sep=31pt,
	column sep=12pt, text height=1ex, text depth=0.25ex]
	{ &&&&& \underset{w_{p - \lambda - 1}}{\bullet} && \underset{w_{p - \lambda}}{\bullet} && \underset{w_{p - \lambda + 1}}{\bullet} && \cdots && \underset{w_{p - 3}}{\bullet} && \underset{w_{p - 2}}{\bullet} && \underset{w_{p - 1}}{\bullet} \\
	\underset{v_0}{\bullet} && \cdots && \underset{v_{p - \lambda - 2}}{\bullet} &&&&&&&&&&&&&& \underset{v_p}{\bullet} && \cdots && \underset{v_{2p - \lambda - 2}}{\bullet} \\
	&&&&& \underset{v_{p - \lambda - 1}}{\bullet} && \underset{v_{p - \lambda}}{\bullet} && \underset{v_{p - \lambda + 1}}{\bullet} && \cdots && \underset{v_{p - 3}}{\bullet} && \underset{v_{p - 2}}{\bullet} && \underset{v_{p - 1}}{\bullet} \\};
	\path[->,font=\scriptsize]
	(m-2-1)  edge [bend left=20] node[auto] {$1$} (m-2-3)
	(m-2-3)  edge [bend left=20, shorten >=-7pt] node[auto, xshift=18pt] {$-\lambda - 2$} (m-2-5)
	(m-1-6)  edge [bend left=20] node[auto, xshift=-10pt] {$-\lambda$} (m-1-8)
	(m-1-8)  edge [bend left=20] node[auto, xshift=15pt] {$1 - \lambda$} (m-1-10)
	(m-1-10) edge [bend left=20] node[auto, xshift=-15pt] {$2 - \lambda$} (m-1-12)
	(m-1-12) edge [bend left=20] node[auto] {$-3$} (m-1-14)
	(m-1-14) edge [bend left=20] node[auto] {$-2$} (m-1-16)
	(m-1-16) edge [bend left=20] node[auto] {$-1$} (m-1-18)
	(m-3-6)  edge [bend left=20] node[auto, xshift=-10pt] {$-\lambda$} (m-3-8)
	(m-3-8)  edge [bend left=20] node[auto, xshift=15pt] {$1 - \lambda$} (m-3-10)
	(m-3-10) edge [bend left=20] node[auto, xshift=-15pt] {$2 - \lambda$} (m-3-12)
	(m-3-12) edge [bend left=20] node[auto] {$-3$} (m-3-14)
	(m-3-14) edge [bend left=20] node[auto] {$-2$} (m-3-16)
	(m-3-16) edge [bend left=20] node[auto] {$-1$} (m-3-18)
	(m-2-3)  edge [bend left=20] node[auto] {$-\lambda - 2$} (m-2-1)
	(m-2-5)  edge [bend left=20] node[auto, xshift=9pt] {$1$} (m-2-3)
	(m-1-8)  edge [bend left=20] node[auto, xshift=-10pt] {$-1$} (m-1-6)
	(m-1-10) edge [bend left=20] node[auto, xshift=10pt] {$-2$} (m-1-8)
	(m-1-12) edge [bend left=20] node[auto, xshift=-12pt] {$-3$} (m-1-10)
	(m-1-14) edge [bend left=20] node[auto] {$2 - \lambda$} (m-1-12)
	(m-1-16) edge [bend left=20] node[auto] {$1 - \lambda$} (m-1-14)
	(m-1-18) edge [bend left=20] node[auto] {$-\lambda$} (m-1-16)
	(m-3-8)  edge [bend left=20] node[auto, xshift=-10pt] {$-1$} (m-3-6)
	(m-3-10) edge [bend left=20] node[auto, xshift=10pt] {$-2$} (m-3-8)
	(m-3-12) edge [bend left=20] node[auto, xshift=-12pt] {$-3$} (m-3-10)
	(m-3-14) edge [bend left=20] node[auto] {$2 - \lambda$} (m-3-12)
	(m-3-16) edge [bend left=20] node[auto] {$1 - \lambda$} (m-3-14)
	(m-3-18) edge [bend left=20] node[auto] {$-\lambda$} (m-3-16)
	(m-2-19) edge [bend left=20] node[auto] {$1$} (m-2-21)
	(m-2-21) edge [bend left=20, shorten >=-7pt] node[auto, xshift=18pt] {$-\lambda - 2$} (m-2-23)
	(m-2-23) edge [bend left=20] node[auto, xshift=8pt] {$1$} (m-2-21)
	(m-2-21) edge [bend left=20] node[auto] {$-\lambda - 2$} (m-2-19)
	(m-1-6)  edge[shorten <=7pt] node[below, xshift=5pt] {$\frac{-1}{\lambda + 1}$} (m-2-5)
	(m-2-5)  edge[shorten <=7pt] node[pos=.45, below, xshift=-10pt] {$-\lambda - 1$} (m-3-6)
	(m-1-18) edge[shorten <=5pt] node[below, xshift=-5pt] {$\frac{-1}{\lambda + 1}$} (m-2-19)
	(m-2-19) edge node[auto, xshift=-5pt] {$-\lambda - 1$} (m-3-18)
	(m-1-8)  edge[shorten <=5pt] node[pos=.6, above, xshift=-5pt] {$\frac{-1}{\lambda}$} (m-3-6)
	(m-1-10) edge[shorten <=6pt] node[pos=.6, above, xshift=-5pt] {$\frac{-1}{\lambda - 1}$} (m-3-8)
	(m-1-12) edge node[pos=.6, above, xshift=-5pt] {$\frac{-1}{\lambda - 2}$} (m-3-10)
	(m-1-14) edge[shorten <=5pt] node[pos=.6, above, xshift=-5pt] {$-\frac{1}{3}$} (m-3-12)
	(m-1-16) edge[shorten <=5pt] node[pos=.6, above, xshift=-5pt] {$-\frac{1}{2}$} (m-3-14)
	(m-1-18) edge[shorten <=5pt] node[pos=.6, above, xshift=-5pt] {$-1$} (m-3-16);
	\draw[<-] (m-2-1)  .. controls +(70:50pt) and +(110:50pt) .. node[pos=.5, above]{\scriptsize $-\lambda - 2$} (m-2-1);
	\draw[<-] (m-2-5)  .. controls +(70:50pt) and +(110:50pt) .. node[pos=.5, above]{\scriptsize $\lambda + 2$} (m-2-5);
	\draw[<-] (m-2-19)  .. controls +(70:50pt) and +(110:50pt) .. node[pos=.5, above]{\scriptsize $-\lambda - 2$} (m-2-19);
	\draw[<-] (m-2-23)  .. controls +(70:50pt) and +(110:50pt) .. node[pos=.5, above]{\scriptsize $\lambda + 2$} (m-2-23);
	\draw[<-] (m-1-6)  .. controls +(70:50pt) and +(110:50pt) .. node[pos=.5, above]{\scriptsize $\lambda$} (m-1-6);
	\draw[<-] (m-1-8)  .. controls +(70:50pt) and +(110:50pt) .. node[pos=.5, above]{\scriptsize $\lambda - 2$} (m-1-8);
	\draw[<-] (m-1-10) .. controls +(70:50pt) and +(110:50pt) .. node[pos=.5, above]{\scriptsize $\lambda - 4$} (m-1-10);
	\draw[<-] (m-1-14) .. controls +(70:50pt) and +(110:50pt) .. node[pos=.5, above]{\scriptsize $4 - \lambda$} (m-1-14);
	\draw[<-] (m-1-16) .. controls +(70:50pt) and +(110:50pt) .. node[pos=.5, above]{\scriptsize $2 - \lambda$} (m-1-16);
	\draw[<-] (m-1-18) .. controls +(70:50pt) and +(110:50pt) .. node[pos=.5, above]{\scriptsize $-\lambda$} (m-1-18);
	\draw[shorten >=5pt, shorten <=5pt, <-] (m-3-6)  .. controls +(250:50pt) and +(290:50pt) .. node[pos=.5, below]{\scriptsize $\lambda$} (m-3-6);
	\draw[shorten >=5pt, shorten <=5pt, <-] (m-3-8)  .. controls +(250:50pt) and +(290:50pt) .. node[pos=.5, below]{\scriptsize $\lambda - 2$} (m-3-8);
	\draw[shorten >=5pt, shorten <=5pt, <-] (m-3-10) .. controls +(250:50pt) and +(290:50pt) .. node[pos=.5, below]{\scriptsize $\lambda - 4$} (m-3-10);
	\draw[shorten >=5pt, shorten <=5pt, <-] (m-3-14) .. controls +(250:50pt) and +(290:50pt) .. node[pos=.5, below]{\scriptsize $4 - \lambda$} (m-3-14);
	\draw[shorten >=5pt, shorten <=5pt, <-] (m-3-16) .. controls +(250:50pt) and +(290:50pt) .. node[pos=.5, below]{\scriptsize $2 - \lambda$} (m-3-16);
	\draw[shorten >=5pt, shorten <=5pt, <-] (m-3-18) .. controls +(250:50pt) and +(290:50pt) .. node[pos=.5, below]{\scriptsize $-\lambda$} (m-3-18);
	\path[draw] (-5.7, -6) rectangle (5.9, -4);
	\draw (-5.1, -5) node {$e$:};
	\draw[<-] (-4.8, -5) .. controls +(-20:18pt) and +(200:18pt) .. (-3.3, -5);
	\draw (-2.9, -5) node {$+$};
	\draw[<-] (-2.8, -5.7) -- (-1.8, -4.3);
	\draw (-.9, -5) node {$f$:};
	\draw[->] (-.6, -5) .. controls +(20:18pt) and +(160:18pt) .. (1.3, -5);
	\draw (1.7, -5) node {$+$};
	\draw[->] (1.8, -4.3) -- (2.8, -5.7);
	\draw (3.5, -5) node {$h$:};
	\draw[<-] (4.2, -5.5) .. controls +(70:40pt) and +(110:40pt) ..  (4, -5.5);
	\draw (4.7, -5) node {$+$};
	\draw[<-] (5.2, -4.5) .. controls +(250:40pt) and +(290:40pt) ..  (5.4, -4.5);
	\draw (.25, 4) node {$Q(\lambda)$};
\endscope
\end{tikzpicture}
\caption{Graph of $Q(\lambda)$} \label{figQ}
\end{sidewaysfigure}

\item The {\bf non-constant modules} $\Phi_\xi(\lambda)$.

Assume $\lambda \geq p$ and let $\xi \in \mathbb P^1$.  If $\xi = [1:\varepsilon]$ then $\Phi_\xi(\lambda)$ is defined by
\begin{center}
\begin{tabular}{rrl}
Basis: & \multicolumn{2}{l}{$\set{w_{a + 1}, w_{a + 2}, \ldots, w_\lambda}$} \hspace{122pt} \\
Action: & $ew_i$ & \hspace{-7pt}$= (i + 1)(w_{i + 1} - {d \choose i}\varepsilon^{i - a}\delta_{\lambda, i}w_{a + 1})$ \\
& $fw_i$ & \hspace{-7pt}$= (\lambda - i + 1)w_{i - 1}$ \\
& $hw_i$ & \hspace{-7pt}$= (2i - \lambda)w_i$ \\
Graph: & \multicolumn{2}{l}{\Cref{figPhi}} \\
Local Jordan type: & \multicolumn{2}{l}{$[p]^{r-1}[p - a - 1][a + 1]$ at $\xi$ and $[p]^r$ elsewhere}
\end{tabular}
\end{center}
If $\xi = [0:1]$ then $\Phi_\xi(\lambda)$ is defined to be the submodule of $V(\lambda)$ spanned by the basis elements $\set{v_{a + 1}, v_{a + 2}, \ldots, v_\lambda}$.  It is also depicted in \Cref{figPhi} and has the same local Jordan type as above.
\begin{sidewaysfigure}[p]
\centering
\vspace*{350pt}
\begin{tikzpicture}	[description/.style={fill=white,inner sep=2pt}]
\useasboundingbox (-9, -5.7) rectangle (8, 4.8);
\scope[transform canvas={scale=.8}]
	\matrix (m) [matrix of math nodes, row sep=50pt,
	column sep=20pt, text height=1ex, text depth=0.25ex]
	{ & {} & {} &&&&&& {} &&& \\ \underset{w_\lambda}{\bullet} & \cdots & \underset{w_{qp + a + 1}}{\bullet} & \underset{w_{qp + a}}{\bullet} & \cdots & \underset{w_{qp}}{\bullet} & \underset{w_{qp - 1}}{\bullet} & \cdots & \underset{w_{(q - 1)p + a + 1}}{\bullet} & \underset{w_{(q - 1)p + a}}{\bullet} & \cdots & \underset{w_{a + 1}}{\bullet} \\ \\ \\ \\ \\ \\ && \underset{v_{a + 1}}{\bullet} & \underset{v_{a + 2}}{\bullet} & \underset{v_{a + 3}}{\bullet} & \underset{v_{a + 4}}{\bullet} & \cdots & \underset{v_{\lambda - 2}}{\bullet} & \underset{v_{\lambda - 1}}{\bullet} & \underset{v_{\lambda}}{\bullet} \\};
	\path[->,font=\scriptsize]
	(m-2-12) edge [bend left=20] node[auto] {$a + 2$} (m-2-11)
	(m-2-11) edge [bend left=20, shorten >=3pt] node[auto, xshift=-5pt] {$a$} (m-2-10)
	(m-2-10) edge [bend left=20, shorten >=7pt, shorten <=3pt] node[auto] {$a + 1$} (m-2-9)
	(m-2-9)  edge [bend left=20, shorten <=7pt] node[auto, xshift=7pt] {$a + 2$} (m-2-8)
	(m-2-8)  edge [bend left=20] node[auto, xshift=-10pt] {$-1$} (m-2-7)
	(m-2-6)  edge [bend left=20] node[auto] {$1$} (m-2-5)
	(m-2-5)  edge [bend left=20] node[auto, xshift=-5pt] {$a$} (m-2-4)
	(m-2-4)  edge [bend left=20] node[auto] {$a + 1$} (m-2-3)
	(m-2-3)  edge [bend left=20] node[auto, xshift=13pt] {$a + 2$} (m-2-2)
	(m-2-2)  edge [bend left=20] node[auto] {$a$} (m-2-1)
	(m-2-1)  edge [bend left=20] node[auto] {$1$} (m-2-2)
	(m-2-2)  edge [bend left=20, shorten >=-10pt] node[auto, xshift=12pt] {$-1$} (m-2-3)
	(m-2-4)  edge [bend left=20, shorten <=-7pt] node[auto, xshift=-7pt] {$1$} (m-2-5)
	(m-2-5)  edge [bend left=20, shorten >=-3pt] node[auto] {$a$} (m-2-6)
	(m-2-6)  edge [bend left=20, shorten <=-3pt, shorten >=-7pt] node[auto, xshift=13pt] {$a + 1$} (m-2-7)
	(m-2-7)  edge [bend left=20, shorten <=-7pt] node[auto, xshift=-15pt] {$a + 2$} (m-2-8)
	(m-2-8)  edge [bend left=20, shorten >=-10pt] node[auto, xshift=12pt] {$-1$} (m-2-9)
	(m-2-10) edge [bend left=20, shorten <=-10pt] node[auto, xshift=-10pt] {$1$} (m-2-11)
	(m-2-11) edge [bend left=20] node[auto] {$-1$} (m-2-12);
	\draw[<-] (m-2-12)  .. controls +(70:50pt) and +(110:50pt) .. node[pos=.5, above]{\scriptsize $a + 2$} (m-2-12);
	\draw[<-] (m-2-10)  .. controls +(70:50pt) and +(110:50pt) .. node[pos=.5, above]{\scriptsize $a$} (m-2-10);
	\draw[<-] (m-2-9) .. controls +(70:50pt) and +(110:50pt) .. node[pos=.5, above]{\scriptsize $a + 2$} (m-2-9);
	\draw[<-] (m-2-7) .. controls +(70:50pt) and +(110:50pt) .. node[pos=.5, above]{\scriptsize $-(a + 2)$} (m-2-7);
	\draw[<-] (m-2-6) .. controls +(70:50pt) and +(110:50pt) .. node[pos=.5, above]{\scriptsize $-a$} (m-2-6);
	\draw[<-] (m-2-4) .. controls +(70:50pt) and +(110:50pt) .. node[pos=.5, above]{\scriptsize $a$} (m-2-4);
	\draw[<-] (m-2-3) .. controls +(70:50pt) and +(110:50pt) .. node[pos=.5, above]{\scriptsize $a + 2$} (m-2-3);
	\draw[<-] (m-2-1) .. controls +(70:50pt) and +(110:50pt) .. node[pos=.5, above]{\scriptsize $a$} (m-2-1);
	\draw[shorten >=5pt, ->] (m-2-1) .. controls +(210:170pt) and +(250:150pt) .. node[pos=.21, below, xshift=-20pt]{\scriptsize $-(a + 1)\varepsilon^{rp}$} (m-2-12);
	\draw[shorten >=5pt, shorten <=5pt, ->] (m-2-4) .. controls +(220:130pt) and +(250:120pt) .. node[pos=.2, below, xshift=-30pt]{\scriptsize $-(a + 1)\binom{r}{q}\varepsilon^{qp}$} (m-2-12);
	\draw[shorten >=5pt, shorten <=8pt, ->] (m-2-10) .. controls +(220:70pt) and +(250:70pt) .. node[pos=.3, below, xshift=-43pt]{\scriptsize $-(a + 1)\binom{r}{q - 1}\varepsilon^{(q - 1)p}$} (m-2-12);
	\path[draw] (-6.2, -2.5) rectangle (4.9, -.5);
	\draw (-5.6, -1.5) node {$e$:};
	\draw[<-] (-5.3, -1.5) .. controls +(-20:18pt) and +(200:18pt) .. (-3.8, -1.5);
	\draw (-3.4, -1.5) node {$+$};
	\draw[->] (-2.8, -1.2) .. controls +(230:40pt) and +(250:30pt) .. (-1.8, -1.2);
	\draw (-.3, -1.5) node {$f$:};
	\draw[->] (0, -1.5) .. controls +(20:18pt) and +(160:18pt) .. (1.9, -1.5);
	\draw (3.5, -1.5) node {$h$:};
	\draw[<-] (4.2, -2) .. controls +(70:40pt) and +(110:40pt) ..  (4, -2);
	\draw (-.5, 7) node {$\Phi_{[1:\varepsilon]}(\lambda)$};
	\draw (.4, -4) node {$\Phi_{[0:1]}(\lambda)$};
	\path[->,font=\scriptsize]
	(m-8-3)  edge [bend left=20] node[auto] {$a + 2$} (m-8-4)
	(m-8-4)  edge [bend left=20] node[auto] {$-1$} (m-8-3)
			 edge [bend left=20] node[auto] {$a + 3$} (m-8-5)
	(m-8-5)  edge [bend left=20] node[auto] {$-2$} (m-8-4)
			 edge [bend left=20] node[auto] {$a + 4$} (m-8-6)
	(m-8-6)  edge [bend left=20] node[auto] {$-3$} (m-8-5)
			 edge [bend left=20] node[auto] {$a + 5$} (m-8-7)
	(m-8-7)  edge [bend left=20] node[auto] {$-4$} (m-8-6)
			 edge [bend left=20] node[auto] {$\lambda - 2$} (m-8-8)
	(m-8-8)  edge [bend left=20] node[auto] {$3$} (m-8-7)
			 edge [bend left=20] node[auto] {$\lambda - 1$} (m-8-9)
	(m-8-9)  edge [bend left=20] node[auto] {$2$} (m-8-8)
			 edge [bend left=20] node[auto] {$\lambda$} (m-8-10)
	(m-8-10) edge [bend left=20] node[auto] {$1$} (m-8-9);
	\draw[<-] (m-8-3) .. controls +(70:50pt) and +(110:50pt) .. node[pos=.5, above]{\scriptsize $-(a + 2)$} (m-8-3);
	\draw[<-] (m-8-4) .. controls +(70:50pt) and +(110:50pt) .. node[pos=.5, above]{\scriptsize $-(a + 4)$} (m-8-4);
	\draw[<-] (m-8-5) .. controls +(70:50pt) and +(110:50pt) .. node[pos=.5, above]{\scriptsize $-(a + 6)$} (m-8-5);
	\draw[<-] (m-8-6) .. controls +(70:50pt) and +(110:50pt) .. node[pos=.5, above]{\scriptsize $-(a + 8)$} (m-8-6);
	\draw[<-] (m-8-8) .. controls +(70:50pt) and +(110:50pt) .. node[pos=.5, above]{\scriptsize $-(a - 4)$} (m-8-8);
	\draw[<-] (m-8-9) .. controls +(70:50pt) and +(110:50pt) .. node[pos=.5, above]{\scriptsize $-(a - 2)$} (m-8-9);
	\draw[<-] (m-8-10) .. controls +(70:50pt) and +(110:50pt) .. node[pos=.5, above]{\scriptsize $-a$} (m-8-10);
\endscope
\end{tikzpicture}
\caption{Graphs of $\Phi_\xi(\lambda)$} \label{figPhi}
\end{sidewaysfigure}
\end{itemize}

\begin{Thm}[\cite{premetSl2}] \label{thmPremet}
Each of the following modules are indecomposable:
\begin{itemize}
\item $V(\lambda)$ and $Q(\lambda)$ for $0 \leq \lambda < p$.
\item $V(\lambda)$ and $V(\lambda)^\ast$ for $\lambda \geq p$ such that $p \nmid \lambda + 1$.
\item $\Phi_\xi(\lambda)$ for $\xi \in \mathbb P^1$ and $\lambda \geq p$ such that $p \nmid \lambda + 1$.
\end{itemize}
Moreover, these modules are pairwise non-isomorphic, save $Q(p-1) = V(p-1)$, and give a complete classification of the indecomposable restricted $\slt$-modules.
\end{Thm}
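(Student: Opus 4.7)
The plan is to rely on Premet's original classification \cite{premetSl2} for the heavy lifting (indecomposability, pairwise non-isomorphism, and completeness) and to focus the proof here on verifying that the explicit formulas introduced in this section actually present Premet's modules. For the Weyl modules $V(\lambda)$, their duals $V(\lambda)^\ast$, and the projectives $Q(\lambda)$, the explicit bases and actions are standard (cf.\ Benkart--Osborn \cite{benkartSl2reps}) and all that is needed is to confirm the bracket relations $[h,e]=2e$, $[h,f]=-2f$, $[e,f]=h$ on the listed basis vectors and to check that $e^{[p]}$, $f^{[p]}$, $h^{[p]}-h$ act as zero. These checks reduce to elementary arithmetic since each basis vector is a weight vector for $h$ and the $e$/$f$ action either shifts along a single chain or, in the case of $Q(\lambda)$, jumps between the $v$-chain and $w$-chain via the prescribed $\frac{1}{i}$ correction terms.

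The substantive work lies with the non-constant modules $\Phi_\xi(\lambda)$, because the paper asserts no explicit description has previously been published. The case $\xi=[0:1]$ is easy: $\Phi_{[0:1]}(\lambda)$ is defined as the submodule of $V(\lambda)$ spanned by $\{v_{a+1},\ldots,v_\lambda\}$, so one only needs to verify this span is $\slt$-stable, which follows directly from the $V(\lambda)$ formulas since the coefficient $\lambda-i+1$ in $ev_i$ vanishes at $i=a+1$ modulo $p$. For $\xi=[1:\varepsilon]$ I would first check that the formulas define an $\slt$-module: the relations involving $h$ reduce to the weight statement that $w_i$ has weight $2i-\lambda$, while the relation $[e,f]=h$ must be checked on every basis vector, with the only delicate case being at $i=\lambda$ where the ``wrap-around'' correction $-(i+1)\binom{d}{i}\varepsilon^{i-a}\delta_{\lambda,i}w_{a+1}$ in $ew_i$ is active. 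One then checks the $p$-operation. Next I would match this explicit module to Premet's abstract construction: Premet realises $\Phi_\xi(\lambda)$ as a pullback/twist of $V(\lambda)$ by the parameter $\xi$, and I would produce an explicit isomorphism by expressing our $w_i$ in terms of Premet's basis; the binomial factors and powers of $\varepsilon$ appearing in the action should emerge from expanding the relevant change-of-basis matrix.

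Once the explicit descriptions are matched with Premet's constructions, indecomposability, pairwise non-isomorphism, and completeness all follow from \cite{premetSl2}. As a sanity check, non-isomorphism between families is visible from the invariants already tabulated in this section: constant Jordan type $[p]^r[a+1]$ for $V(\lambda)$ and $V(\lambda)^\ast$, constant Jordan type $[p]^2$ for $Q(\lambda)$, and non-constant Jordan type with singular point $\xi$ for $\Phi_\xi(\lambda)$, together with socle/head structure to separate $V(\lambda)$ from $V(\lambda)^\ast$ when both are non-simple, and projectivity to single out $Q(\lambda)$ via \cref{thmProjCJT}.

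The main obstacle is pinning down the explicit formula for the wrap-around term in $ew_\lambda$ within $\Phi_{[1:\varepsilon]}(\lambda)$: identifying the correct binomial coefficient and power of $\varepsilon$ so that the bracket relations hold and so that the resulting module is isomorphic to Premet's $\Phi_\xi(\lambda)$ is a careful combinatorial computation, and this is where the parameter $\varepsilon$ meaningfully enters the module structure. All the other verifications are routine once this calculation is settled.
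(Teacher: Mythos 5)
Your proposal follows the same overall strategy as the paper: cite Premet for indecomposability, pairwise non-isomorphism, and completeness, and devote the local work to verifying that the explicit bases and actions really describe Premet's modules. The substantive difference is in how you handle $\Phi_{[1:\varepsilon]}(\lambda)$. You propose first to check that the displayed formulas satisfy the $\slt$ bracket relations and the $p$-operation compatibility, and only afterward to produce an isomorphism to Premet's module. The paper avoids the axiom-checking entirely: Premet's $\Phi_\xi(\lambda)$ is, by definition, $\phi(\xi)\Phi(\lambda)$, the image of a concrete submodule of $V(\lambda)$ under the fixed group element $\phi(\xi) \in \SL_2$ (not a ``pullback/twist'' as you describe it), so the paper computes the $\SL_2$-action on $V(\lambda)$, row-reduces the spanning set $\set{\phi(\xi)v_i}$ of $\Phi_\xi(\lambda)$ to obtain the basis $w_i$ (\cref{propBas}), and then reads off the $e,f,h$-action formulas by calculation inside $V(\lambda)$; the module axioms are inherited automatically, and the wrap-around term in $ew_\lambda$ emerges from the row reduction rather than needing to be postulated and separately verified. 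Your route is valid but longer, and it hinges on correctly identifying Premet's construction, which your description leaves vague; the explicit form $\phi(\xi)\Phi(\lambda)$ with $\phi$ the given section of $\SL_2 \to \SL_2/B_2 \simeq \mathbb P^1$ is exactly what makes the change-of-basis computation concrete. Also note that the Jordan type data is not part of the theorem's statement and is established separately; for $\Phi_\xi(\lambda)$ the paper obtains it from the case $\xi = [0:1]$ via \cref{lemBJType} and the adjoint action of $\phi(\xi)$, so your ``sanity check'' via Jordan types is an independent supplement (and, as you correctly flag, insufficient alone since $V(\lambda)$ and $V(\lambda)^\ast$ share a Jordan type).
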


As stated before the explicit bases for $V(\lambda)$, $V(\lambda)^\ast$, and $Q(\lambda)$ are known; see, for example, Benkart and Osborn \cite{benkartSl2reps}.  For the local Jordan type of $V(\lambda)$ and $V(\lambda)^\ast$ the matrix that describes the action of $e$ with respect to the given basis is almost in Jordan normal form, one needs only to scale the basis elements appropriately, so we can immediately read off the local Jordan type at the point $ke \in \PG[\slt]$, and \cref{thmRatCJT} gives that these modules have constant Jordan type.  As \cref{thmProjCJT} gives the local Jordan type of the $Q(\lambda)$ all that is left is to justify the explicit description of $\Phi_\xi(\lambda)$ and its local Jordan type.

First we recall the definition of $\Phi_\xi(\lambda)$.  Let $V = k^2$ be the standard representation of $\SL_2$, then the dual $V^\ast$ is a $2$-dimensional representation with basis $\set{x, y}$ (dual to the standard basis for $V$).  The induced representation on the symmetric product $S(V^\ast)$ is degree preserving and the dual $S^\lambda(V^\ast)^\ast$ of the degree $\lambda$ subspace is the Weyl module $V(\lambda)$.  Specifically, we let $v_i \in V(\lambda)$ be dual to $x^{\lambda - i}y^i$.

Let $B_2 \subseteq \SL_2$ be the Borel subgroup of upper triangular matrices and recall that the homogeneous space $\SL_2/B_2$ is isomorphic to $\mathbb P^1$ as a variety; the map $\phi\colon \mathbb P^1 \to \SL_2$ given by
\[[1:\varepsilon] \mapsto \begin{bmatrix} 0 & 1 \\ -1 & -\varepsilon \end{bmatrix} \qquad \text{ and } \qquad [0:1] \mapsto \begin{bmatrix} 1 & 0 \\ 0 & 1 \end{bmatrix},\]
factors to an explicit isomorphism $\mathbb P^1 \overset{\sim}{\to} \SL_2/B_2$.

\begin{Def}[\cite{premetSl2}]
Let $\Phi(\lambda)$ be the $\slt$-submodule of $V(\lambda)$ spanned by the vectors $\set{v_{a + 1}, v_{a + 2}, \ldots, v_\lambda}$.  Given $\xi \in \mathbb P^1$ we define $\Phi_\xi(\lambda)$ to be the $\slt$-module $\phi(\xi)\Phi(\lambda)$.
\end{Def}

Observe first that $\Phi_{[0:1]}(\lambda) = \Phi(\lambda)$ so in this case we have the desired description.  Now assume $\xi = [1:\varepsilon]$ where $\varepsilon \in k$.  As $\phi(\xi)$ is invertible multiplication by it is an isomorphism so $\Phi_\xi(\lambda)$ has basis $\set{\phi(\xi)v_i}$.  Our basis for $\Phi_\xi(\lambda)$ will be obtained by essentially a row reduction of this basis, so to proceed we now compute the action of $\SL_2$ on $V(\lambda)$.  Observe:
\begin{align*}
\left(\begin{bmatrix} a & b \\ c & d \end{bmatrix}v_i\right)(x^{\lambda - j}y^j) &= v_i\left(\begin{bmatrix} d & -b \\ -c & a \end{bmatrix}x^{\lambda - j}y^j\right) \\
&= v_i\left(\sum_{s = 0}^{\lambda - j}\sum_{t = 0}^j\binom{\lambda - j}{s}\binom{j}{t}a^kb^{\lambda - j - k}c^td^{j - t}x^{s + t}y^{\lambda - k - s}\right) \\
&= \sum\binom{\lambda - j}{s}\binom{j}{t}a^sb^{\lambda - j - s}c^td^{j - t}
\end{align*}
where the sum is over pairs $(s, t) \in \mathbb N^2$ such that $0 \leq s \leq \lambda - j$, $0 \leq t \leq j$, and $s + t = \lambda - i$.  Such pairs come in the form $(\lambda - i - t, t)$ where $t$ ranges from $\max(0, j - i)$ to $\min(j, \lambda - i)$ therefore
\[\begin{bmatrix} a & b \\ c & d \end{bmatrix}v_i = \sum_{j = 0}^r\sum_{t = \max(0, j - i)}^{\min(j, \lambda - i)}\binom{\lambda - j}{\lambda - i - t}\binom{j}{t}a^{\lambda - i - t}b^{t + i - j}c^td^{j - t}v_j.\]
For computing $\Phi_\xi(\lambda)$ we will need only the following special case:
\[\phi(\xi)v_i = \begin{bmatrix} 0 & 1 \\ -1 & -\varepsilon \end{bmatrix}v_i = \sum_{j = \lambda - i}^\lambda(-1)^j\binom{j}{\lambda - i}\varepsilon^{i + j - \lambda}v_j.\]

\begin{Prop} \label{propBas}
Given $i = qp + b$, $0 \leq b < p$, define
\[w_i = \begin{cases} v_{\lambda - i} - \binom{r}{q}\varepsilon^{qp}v_{\lambda - b} & \text{if} \ \ b \leq a \\ v_{\lambda - i} & \text{if} \ \ b  > a. \end{cases}\]
Then the vectors $w_{a + 1}, w_{a + 2}, \ldots, w_\lambda$ form a basis of $\Phi_\xi(\lambda)$.
\end{Prop}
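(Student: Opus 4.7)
The plan is to transport the problem back to $\Phi(\lambda)$ via the invertible operator $\psi := \phi(\xi)^{-1}$, which comes from $\begin{bmatrix} -\varepsilon & -1 \\ 1 & 0 \end{bmatrix}$ acting on $V(\lambda)$. Because $\Phi_\xi(\lambda) = \phi(\xi)\Phi(\lambda)$ and $\phi(\xi)$ is invertible, the proposition is equivalent to the statement that $\{\psi w_i\}_{i = a+1}^\lambda$ is a basis of $\Phi(\lambda) = \spn(v_{a+1}, \ldots, v_\lambda)$.  I would establish this by checking (i) each $\psi w_i$ has zero $v_j$-coefficient for $j = 0, 1, \ldots, a$, and (ii) the collection $\{\psi w_i\}$ is linearly independent; a dimension count ($\dim\Phi(\lambda) = \lambda - a$ equals the number of $w_i$) then finishes.

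First I would specialize the general $\SL_2$-action formula stated just before the proposition to $\psi$.  Its bottom-right entry vanishes, so only the $t = j$ summand survives and one obtains
\[\psi v_m = (-1)^m \sum_{j = 0}^{\lambda - m} \binom{\lambda - j}{m}(-\varepsilon)^{\lambda - m - j} v_j.\]
For fixed $j \leq a$ I would expand the base-$p$ decompositions $\lambda - j = rp + (a - j)$, and $m = (r - q)p + (a - b)$ when $b \leq a$ or $m = (r - q - 1)p + (p + a - b)$ when $b > a$, and apply Lucas' theorem to $\binom{\lambda - j}{m}$ and (when applicable) $\binom{\lambda - j}{\lambda - b}$.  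In the case $b > a$ the last digit $p + a - b$ of $m$ strictly exceeds $a \geq a - j$, so Lucas forces $\binom{\lambda - j}{m} \equiv 0 \pmod{p}$ and the coefficient vanishes automatically.  In the case $b \leq a$, Lucas yields $\binom{\lambda - j}{m} \equiv \binom{a - j}{a - b}\binom{r}{q}$ and $\binom{\lambda - j}{\lambda - b} \equiv \binom{a - j}{a - b} \pmod{p}$; a direct check that $(-1)^m(-\varepsilon)^{\lambda - m - j} = (-1)^{\lambda - j}\varepsilon^{qp + b - j} = \varepsilon^{qp}(-1)^{\lambda - b}(-\varepsilon)^{b - j}$ then shows that the contributions from $\psi v_m$ and from $-\binom{r}{q}\varepsilon^{qp}\psi v_{\lambda - b}$ cancel.

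For linear independence I would identify the highest-index $v_j$ in each $\psi w_i$.  The $\psi v_m$ summand contributes $v_i$ with coefficient $(-1)^{\lambda - i}$ (from the $j = \lambda - m = i$ term), while in the case $b \leq a$ the correction $\psi v_{\lambda - b}$ ranges only up to $v_b$ with $b \leq a < i$.  Hence the leading indices of $\psi w_i$ for $i = a + 1, \ldots, \lambda$ are pairwise distinct, forcing linear independence.  Combined with the containment in $\Phi(\lambda)$ and the dimension count, $\{\psi w_i\}$ is a basis of $\Phi(\lambda)$, and applying $\phi(\xi)$ produces the claimed basis of $\Phi_\xi(\lambda)$.

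The main obstacle is the Lucas-theorem bookkeeping in the case $b \leq a$: once both binomial coefficients reduce to $\binom{a - j}{a - b}\binom{r}{q} \pmod{p}$, the cancellation depends on an exact match of the residual $(-1)$-signs and $\varepsilon$-powers, requiring careful parity arithmetic on the base-$p$ digits.  Degenerate subcases, such as $j > b$ (where $\binom{a - j}{a - b}$ already vanishes) or $p \mid \binom{r}{q}$ (where the correction term disappears from $w_i$ itself), must be absorbed uniformly by the same mod-$p$ identity rather than handled ad hoc.
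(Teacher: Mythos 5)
Your proposal is correct, and it takes a genuinely different route from the paper's proof.

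The paper works \emph{forward}: it inducts on $i$, showing at each step that $\spn\set{\phi(\xi)v_{a+1},\ldots,\phi(\xi)v_i}=\spn\set{w_{a+1},\ldots,w_i}$ by replacing $\phi(\xi)v_i$ with an explicit linear combination $w'$ of the previously established $w_j$'s and then verifying $w' = w_i$ coordinate by coordinate; the binomial arithmetic is handled by ad hoc identities such as $\sum_{t=0}^q(-1)^t\binom{q}{t}=0$ together with base-$p$ carry observations. You instead work \emph{backward}: you apply $\psi = \phi(\xi)^{-1}$, reduce the claim to showing $\set{\psi w_i}$ is a basis of $\Phi(\lambda)$, and verify this in two independent checks --- that the coordinates $v_0,\ldots,v_a$ of each $\psi w_i$ vanish (so $\psi w_i\in\Phi(\lambda)$), and that the highest-index nonzero coordinate of $\psi w_i$ is $v_i$ with coefficient $(-1)^{\lambda-i}$ (giving linear independence), with a dimension count closing the argument. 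The modular arithmetic is organized entirely through Lucas' theorem applied to the split $\lambda-j = rp + (a-j)$, $\lambda-i = (r-q)p + (a-b)$ (resp.\ $(r-q-1)p + (p+a-b)$ when $b>a$), $\lambda - b = rp + (a-b)$, rather than through identities proved on the fly.

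I verified your key steps: the specialization of the $\SL_2$-action to $\psi$ (only the $t=j$ term survives because $d=0$, giving $\psi v_m = (-1)^m\sum_{j=0}^{\lambda-m}\binom{\lambda-j}{m}(-\varepsilon)^{\lambda-m-j}v_j$); the $b>a$ vanishing from the digit inequality $p+a-b > a-j$; the $b\le a$ cancellation after Lucas reduces both binomials to $\binom{a-j}{a-b}$ times (respectively) $\binom{r}{q}$ and $1$ and the sign/$\varepsilon$-power bookkeeping collapses both terms to $(-1)^{\lambda-j}\varepsilon^{qp+b-j}$; and the leading-term argument, where the correction $\psi v_{\lambda-b}$ is supported in indices $\le b \le a < i$ so cannot interfere with the $v_i$ coordinate. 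The degenerate subcases you flag ($j>b$ forcing $\binom{a-j}{a-b}=0$, and $p\mid\binom{r}{q}$ killing the correction in $w_i$ itself) are indeed absorbed uniformly. Your approach trades the paper's explicit row-reduction and inductive span-equality for a cleaner structural split (containment plus independence plus dimension count) and gains a more systematic treatment of the binomial residues; the paper's argument, while longer, is entirely elementary and does not invoke Lucas by name. Both are valid proofs.
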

\begin{proof}
We will prove by induction that that for all $a + 1 \leq i \leq \lambda$ we have $\spn_k\set{\phi(\xi)v_{a + 1}, \ldots, \phi(\xi)v_i} = \spn_k\set{w_{a + 1}, \ldots, w_i}$.  For the base case the formula above gives
\begin{align*}
\phi(\xi)v_{a + 1} &= \sum_{j = rp - 1}^\lambda(-1)^j\binom{j}{rp - 1}\varepsilon^{j - rp + 1}v_j \\
&= (-1)^{rp - 1}\binom{rp - 1}{rp - 1}v_{rp - 1} \\
&= (-1)^{rp - 1}w_{a + 1}
\end{align*}
so clearly the statement is true.

For the inductive step we assume the statement holds for integers less than $i$.  Then by hypothesis we have
\[\spn_k\set{\phi(\xi)v_{a + 1}, \ldots, \phi(\xi)v_i} = \spn_k\set{w_{a + 1}, \ldots, w_{i - 1}, \phi(\xi)v_i}\]
and can replace $\phi(\xi)v_i$ with the vector
\[w' = (-1)^{\lambda - i}\phi(\xi)v_i - \sum_{j = a + 1}^{i - 1}(-1)^{i - j}\binom{\lambda - j}{\lambda - i}\varepsilon^{i - j}w_j\]
to get another spanning set.  We then show that $w' = w_i$ by checking that the coordinates of each vector are equal.  Note that for $j < \lambda - i$ the coefficient of $v_j$ in each of the factors of $w'$ is zero, as it is in $w_i$.  The coefficient of $v_{\lambda - i}$ in $(-1)^{\lambda - i}\phi(\xi)v_i$ is $1$ and in each $w_j$, $a + 1 \leq j < i$ it is zero, hence the coefficient in $w'$ is $1$, as it is in $w_i$.

Next assume $\lambda - i < j < rp$.  Then only $\phi(\xi)v_i$ and $w_{\lambda - j}$ contribute a $v_j$ term therefore the coefficient of $v_j$ in $w'$ is
\[(-1)^{\lambda - i + j}\binom{j}{\lambda - i}\varepsilon^{i + j - \lambda} - (-1)^{i + j - \lambda}\binom{j}{\lambda - i}\varepsilon^{i + j - \lambda} = 0\]
which again agrees with $w_i$.  Now all that's left is to check the coefficients of $v_{rp}, v_{rp + 1}, \ldots, v_\lambda$.  Note that $w_t$ has a $v_{rp + j}$ term only if
\[t = p + a - j, 2p + a - j, \ldots, \lambda - j\]
and the coefficient of $v_{rp + j}$ in $w_{tp + a - j}$, for $1 \leq t \leq r$, is
\[-\binom{r}{t}\varepsilon^{tp}.\]
Thus the coefficient of $v_{rp + j}$ in $w'$ is
\[(-1)^{a - i - j}\binom{rp + j}{\lambda - i}\varepsilon^{i + j - a} + \sum(-1)^{i + j - tp - a}\binom{r}{t}\binom{(r - t)p + j}{\lambda - i}\varepsilon^{i + j - a}\]
where the sum is over those $t$ such that $1 \leq t \leq r$ and $tp + a \leq i + j - 1$.

From here there are several cases.  Assume first that $a < b$.  Then, from $b < p$ we get $p + a - b > a \geq j$ hence any binomial coefficient those top number equals $a - b$ in $k$ and whose bottom number equals $j$ in $k$ will be zero because there will be a carry.  Both $\binom{\lambda - t}{\lambda - i}$ and $\binom{(r - t)p + j}{\lambda - i}$ satisfy this condition therefore the coefficient of $v_{rp + j}$ in $w'$ is zero.  Thus if $a < b$ then we have $w' = w_i$.

Next assume that $b \leq a$.  Then the formula above for the coefficient of $v_{rp + j}$ in $w'$ becomes
\begin{align*}
&(-1)^{a - i - j}\left[\binom{r}{q} + \sum(-1)^{tp}\binom{r}{t}\binom{r - t}{r - q}\right]\binom{j}{a - b}\varepsilon^{i + j - a} \\
&\qquad = (-1)^{a - i - j}\left[\binom{r}{q} + \sum(-1)^t\binom{r}{q}\binom{q}{t}\right]\binom{j}{a - b}\varepsilon^{i + j - a} \\
&\qquad = (-1)^{a - i - j}\left[1 + \sum(-1)^t\binom{q}{t}\right]\binom{r}{q}\binom{j}{a - b}\varepsilon^{i + j - a}
\end{align*}
where the sum is over the same $t$ from above.  If $j < a - b$ then clearly this is zero.  If $j > a - b$ then the sum is over $t = 1, 2, \ldots, q$ and
\[1 + \sum_{t = 1}^q(-1)^t\binom{q}{t} = \sum_{t = 0}^q(-1)^t\binom{q}{t} = 0\]
so the only $v_{\lambda - b}$ occurs as a term in $w'$.  In that case the sum is over $t = 1, 2, \ldots, q - 1$ and
\[1 + \sum_{t = 1}^{q - 1}(-1)^t\binom{q}{t} = (-1)^{q + 1} + \sum_{t = 0}^q(-1)^t\binom{q}{t} = (-1)^{q + 1}\]
so the coefficient is
\[(-1)^{a - i - (a - b) + q + 1}\binom{r}{q}\varepsilon^{i + (a - b) - a} = -\binom{r}{q}\varepsilon^{qp}\]
as desired.  Thus $w' = w_i$ and the proof is complete.
\end{proof}

Now that we have a basis it's trivial to determine the action.
\begin{Prop}
Let $i = qp + b$, $a + 1 \leq i \leq \lambda$.  Then
\begin{align*}
ew_i &= \begin{cases} (i + 1)\left(w_{i + 1} - \binom{\lambda}{i}\varepsilon^{qp}w_{b + 1}\right) & \text{if} \ \ a = b \\ (i + 1)w_{i + 1} & \text{if} \ \ a \neq b \end{cases} \\
fw_i &= (\lambda - i + 1)w_{i - 1} \\
hw_i &= (2i - \lambda)w_i
\end{align*}
where $w_a = w_{\lambda + 1} = 0$.
\end{Prop}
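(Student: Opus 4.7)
The plan is to verify each of the three actions by direct computation, using the definition $w_i = v_{\lambda - i} - \binom{r}{q}\varepsilon^{qp}v_{\lambda - b}$ (or $w_i = v_{\lambda - i}$ when $b > a$) together with the standard action on $V(\lambda)$: $ev_j = (\lambda - j + 1)v_{j-1}$, $fv_j = (j+1)v_{j+1}$, $hv_j = (\lambda - 2j)v_j$. Two arithmetic identities will be used throughout: the congruence $i = qp + b \equiv b \pmod{p}$, and Lucas' theorem in the form $\binom{rp + a}{qp + a} \equiv \binom{r}{q} \pmod{p}$.

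The $h$-identity is essentially immediate: expanding gives $hw_i = (2i - \lambda)v_{\lambda - i} - \binom{r}{q}\varepsilon^{qp}(2b - \lambda)v_{\lambda - b}$, and $2b \equiv 2i \pmod{p}$ collapses this to $(2i - \lambda)w_i$. For the $f$-identity, apply $f$ termwise to $w_i$ to obtain a combination of $v_{\lambda - i + 1}$ and (when $b \leq a$) $v_{\lambda - b + 1}$, and compare with $(\lambda - i + 1)w_{i - 1}$. Matching the coefficients reduces to the congruence $\lambda - i + 1 \equiv a - b + 1 \pmod{p}$. The boundary subcases, namely $b = 0$ (where the potential correction becomes a multiple of $v_{\lambda + 1} = 0$) and $i - 1$ forcing $w_{i - 1}$ to jump type (where the scalar $\lambda - i + 1 = (r - q)p$ becomes divisible by $p$), must be checked individually, but each works out.

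The main obstacle is the $e$-action in the case $a = b$. A direct computation gives $ew_i = (i + 1)v_{\lambda - i - 1} - (a + 1)\binom{r}{q}\varepsilon^{qp}v_{\lambda - a - 1}$; to recognize this as $(i + 1)\bigl(w_{i + 1} - \binom{\lambda}{i}\varepsilon^{qp}w_{a + 1}\bigr)$ one needs the identity $(i + 1)\binom{\lambda}{i} \equiv (a + 1)\binom{r}{q} \pmod{p}$, which is precisely where Lucas' theorem is essential (combined with $i + 1 \equiv a + 1 \pmod{p}$). The degenerate subcase $a = p - 1$ (so $i + 1 = (q + 1)p$ and $w_{i + 1}$ acquires a $v_\lambda$-correction) and the boundary $i = \lambda$ (with $w_{\lambda + 1} = 0$ by convention) each require a separate check, but in both cases the extraneous terms vanish either because $i + 1 \equiv 0 \pmod{p}$ or because $v_{\lambda + 1} = 0$ in $V(\lambda)$. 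The case $a \neq b$ is analogous but simpler: when $b > a$ both $w_i$ and $w_{i + 1}$ have no correction, while when $b < a$ both carry the same $\binom{r}{q}\varepsilon^{qp}$ correction, and neither subcase invokes Lucas' theorem.
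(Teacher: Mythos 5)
Your proposal is correct and follows essentially the same route as the paper: a direct case-by-case verification starting from $w_i = v_{\lambda-i} - \binom{r}{q}\varepsilon^{qp}v_{\lambda-b}$ (or $v_{\lambda-i}$ when $b>a$), applying the $V(\lambda)$-action termwise, and matching coefficients using the congruence $i\equiv b\pmod p$. You also correctly isolate the one nontrivial binomial identity $(i+1)\binom{\lambda}{i}\equiv(a+1)\binom{r}{q}$, attributing it to Lucas' theorem applied to $\binom{rp+a}{qp+a}$; the paper uses this identity silently in the $b=a$ case. The only cosmetic difference is that you flag $a=p-1$ as a degenerate subcase, but that value is already excluded since $\Phi_\xi(\lambda)$ is only considered for $p\nmid\lambda+1$.
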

\begin{proof}
The proof is just a case by case analysis.  We start with $e \in \slt$.  If $b < a$ then
\[ew_i = ev_{\lambda - i} - \binom{r}{q}\varepsilon^{qp}ev_{\lambda - b} = (i + 1)v_{\lambda - i - 1} - (b + 1)\binom{r}{q}\varepsilon^{qp}v_{\lambda - b - 1} = (i + 1)w_{i + 1}.\]
If $b = a$ then
\[ew_i = (i + 1)v_{\lambda - i - 1} - (b + 1)\binom{r}{q}\varepsilon^{qp}v_{\lambda - b - 1} = (i + 1)\left(w_{i + 1} - \binom{\lambda}{i}\varepsilon^{qp}w_{b + 1}\right).\]
If $p - 1 > b > a$ then
\[ew_i = ev_{\lambda - i} = (i + 1)v_{\lambda - i - 1} = (i + 1)w_{i + 1}.\]
Finally if $b = p - 1$ then
\[ew_i = (i + 1)v_{\lambda - i - 1} = 0.\]
All the above cases fit the given formula so we are done with $e$.  Next consider $f \in \slt$.  If $0 < b \leq a$ then
\begin{align*}
fw_i &= fv_{\lambda - i} - \binom{r}{q}\varepsilon^{qp}fv_{\lambda - b} \\
&= (\lambda - i + 1)v_{\lambda - i + 1} - (\lambda - b + 1)\binom{r}{q}\varepsilon^{qp}v_{\lambda - b + 1} \\
&= (\lambda - i + 1)w_{i - 1}.
\end{align*}
If $b = 0$ then
\[fw_i = fv_{\lambda - i} - \binom{r}{q}\varepsilon^{qp}fv_\lambda = (\lambda + 1)v_{\lambda - i + 1} = (\lambda - i + 1)w_{i - 1}.\]
If $a + 1 = b$ then
\[fw_i = fv_{\lambda - i} = (\lambda - i + 1)v_{\lambda - i + 1} = spv_{\lambda - i + 1} = 0.\]
Finally if $b > a + 1$ then
\[fw_i = (\lambda - i + 1)v_{\lambda - i + 1} = (\lambda - i + 1)w_{i - 1}.\]
All the above cases fit the given formula so we are done with $f$. Last but not least consider $h \in \slt$.  If $b \leq a$ then
\[hw_i = hv_{\lambda - i} - \binom{r}{q}\varepsilon^{qp}hv_{\lambda - b} = (2i - \lambda)v_{\lambda - i} - (2b - \lambda)\binom{r}{q}\varepsilon^{qp}v_{\lambda - b} = (2i - \lambda)w_i.\]
If $b > a$ then
\[hw_i = hv_{\lambda - i} = (2i - \lambda)v_{\lambda - i} = (2i - \lambda)w_i.\]
\end{proof}

Lastly we calculate that the Jordan type is as stated: $[p]^{r-1}[p - a - 1][a + 1]$ at $\xi$ and $[p]^r$ elsewhere.  First note that the result holds for $\Phi_{[0:1]}(\lambda)$ by \cref{lemBJType}; furthermore, that the point $[0:1] \in \mathbb P^1$ at which the Jordan type is $[p]^{r - 1}[p - a - 1][a + 1]$ corresponds to the line through $f \in \mathcal N_p(\slt)$ under the map
\begin{align*}
\iota\colon\mathbb P^1 &\to \PG[\slt] \\
[s : t] &\mapsto \begin{bmatrix} st & s^2 \\ -t^2 & -st \end{bmatrix}
\end{align*}
from \cref{exNslt}.  Let
\[\ad\colon\SL_2 \to \End(\slt)\]
be the adjoint action of $\SL_2$ on $\slt$.  As $V(\lambda)$ is a rational $\SL_2$-module this satisfies
\[\ad(g)(E)\cdot m = g\cdot(E\cdot(g^{-1}\cdot m))\]
for all $g \in \SL_2$, $E \in \slt$, and $m \in V(\lambda)$.  Along with $\Phi_{\xi}(\lambda) = \phi(\xi)\Phi_{[0:1]}(\lambda)$ this implies commutativity of the following diagram:
\[\xymatrix@C=50pt{\Phi_{[0:1]}(\lambda) \ar[r]^-{\phi(\xi)} \ar[d]_{\hspace*{40pt}E} & \Phi_\xi(\lambda) \ar[d]^{\ad(\phi(\xi))(E)} \\ \Phi_{[0:1]}(\lambda) \ar[r]_-{\phi(\xi)} & \Phi_\xi(\lambda)}\]
As multiplication by $\phi(\xi)$ is an isomorphism, letting $E$ range over $\mathcal N_p(\slt)$ we see that the module $\Phi_\xi(\lambda)$ has Jordan type $[p]^{r - 1}[p - a - 1][a + 1]$ at $\ad(\phi(\xi))(f)$ and $[p]^r$ elsewhere.  Then we simply calculate
\begin{align*}
\ad(\phi(\xi))(f) &= \begin{bmatrix} 0 & 1 \\ -1 & -\varepsilon \end{bmatrix}\begin{bmatrix} 0 & 0 \\ 1 & 0 \end{bmatrix}\begin{bmatrix} 0 & 1 \\ -1 & -\varepsilon \end{bmatrix}^{-1} \\
&= \begin{bmatrix} -\varepsilon & -1 \\ \varepsilon^2 & \varepsilon \end{bmatrix}
\end{align*}
and observe that, as an element of $\PG[\slt]$, this is $\iota([1:\varepsilon])$.

Thus we now have a complete description of the indecomposable $\slt$-modules.  We finish this section with one more computation that will be needed in \Cref{secLieEx}: the computation of the Heller shifts $\Omega(V(\lambda))$ for indecomposable $V(\lambda)$.  Note that $V(p-1) = Q(p-1)$ is projective so $\Omega(V(p-1)) = 0$.  For other $V(\lambda)$ we have the following.

\begin{Prop} \label{propOmega}
Let $\lambda = rp + a$ be a non-negative integer and $0 \leq a < p$ its remainder modulo $p$.  If $a \neq p - 1$ then $\Omega(V(\lambda)) = V((r + 2)p - a - 2)$.
\end{Prop}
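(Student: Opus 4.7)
The plan is to construct an explicit projective cover $\pi \colon P \twoheadrightarrow V(\lambda)$ and prove that $\ker\pi \cong V((r+2)p-a-2)$; since $\Omega(V(\lambda))$ is by definition the kernel of a projective cover, this will establish the proposition.

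First I would identify the head of $V(\lambda)$. The $r+1$ basis vectors $v_0, v_p, v_{2p}, \ldots, v_{rp}$ all have $h$-weight congruent to $a$ modulo $p$, and since $a \ne p-1$ (equivalently $p \nmid \lambda+1$), the action formulas of \cref{secSl2} make it straightforward to check that they project to linearly independent classes in $V(\lambda)/\mathrm{rad}\,V(\lambda)$. A composition-factor count using $\dim V(\lambda) = (r+1)(a+1) + r(p-a-1)$ --- together with the fact that the only simples which can appear are $L(a)$ and $L(p-a-2)$ by the linkage principle --- gives composition factors $(r+1)L(a) + rL(p-a-2)$, whence $\mathrm{head}\,V(\lambda) \cong L(a)^{r+1}$ and dually $\mathrm{soc}\,V(\lambda) \cong L(p-a-2)^r$. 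The projective cover of $V(\lambda)$ is therefore $P := Q(a)^{r+1}$ of dimension $2p(r+1)$, and I would define $\pi$ by sending the top generator $w_{p-a-1}^{(j)}$ of the $j$-th summand ($0 \le j \le r$) to $v_{jp}$ and extending $U(\slt)$-equivariantly using the formulas of \cref{secSl2}. Since the induced map on heads is an isomorphism, $\ker\pi \subseteq \mathrm{rad}\,P$ and $\pi$ is a projective cover; in particular $\Omega V(\lambda) = \ker\pi$, and a dimension count gives $\dim\ker\pi = 2p(r+1) - (rp+a+1) = (r+2)p-a-1 = \dim V((r+2)p-a-2)$.

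The main obstacle is establishing $\ker\pi \cong V((r+2)p-a-2)$ as $\slt$-modules. Because $\mathrm{soc}\,V(\lambda) = L(p-a-2)^r$ contains no $L(a)$-summand, $\pi$ must annihilate $\mathrm{soc}\,P = L(a)^{r+1}$, so $\mathrm{soc}\,\ker\pi = L(a)^{r+1}$. Writing $\mu := (r+2)p-a-2 = (r+1)p + (p-a-2)$ in standard form and applying the head/socle analysis with shifted parameters $r' = r+1$, $a' = p-a-2$, one finds $\mathrm{soc}\,V(\mu) = L(a)^{r+1}$, matching. By \cref{thmPremet} the remaining indecomposable candidates are ruled out: each $\Phi_\xi(\nu)$ has dimension divisible by $p$, whereas $(r+2)p-a-1 \not\equiv 0 \pmod p$; and $V(\mu)^{\ast}$ has socle $L(p-a-2)^{r+2}$, mismatching. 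To rule out nontrivial direct-sum decompositions of $\ker\pi$ into smaller Weyl-type modules, I would exhibit an explicit highest-weight vector of weight $-a-2$ in $\ker\pi$ --- constructed as a specific $U(\slt)$-combination of the $w^{(j)}$'s and the $v^{(j)}$'s in $P$ --- and verify that the cyclic submodule it generates satisfies the Weyl-module formulas for $V(\mu)$ of \cref{secSl2}; this cyclic submodule then equals $\ker\pi$ by dimension, completing the identification.
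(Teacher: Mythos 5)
Your plan starts the same way the paper does --- identifying $Q(a)^{\oplus r+1}$ as the projective cover and performing a dimension count --- but diverges at the crucial identification step, where the two routes are genuinely different. The paper writes down the projective cover map $f$ explicitly in coordinates, solves the linear system $f(x) = 0$ directly (the conditions reduce to $d_{q,i} = 0$ and $c_{q,i+p} = c_{q+1,i}$), reads off an explicit basis $\set{v'_i}$ for $\ker f$, and then checks the $\slt$-action on that basis against the formulas for $V((r+2)p - a - 2)$. You instead propose a structural argument: match socles (here your computation $\mathrm{soc}(\ker\pi) = \mathrm{soc}(P) \cap \ker\pi = L(a)^{r+1}$ is correct and pleasant), rule out the other indecomposables from \cref{thmPremet} via dimension and socle obstructions, and then close the gap against decomposable possibilities by exhibiting a cyclic highest-weight generator. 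The ruling-out steps are sound but logically redundant once the generator argument succeeds; the generator argument by itself is both necessary and sufficient.

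The genuine gap is that the generator step --- which you yourself flag as ``the main obstacle'' --- is exactly where all of the content lies, and your proposal defers it. To make it precise you would have to produce the generating vector, compute the dimension of the cyclic submodule it spans, and verify the $\slt$-relations, which is essentially the same computation the paper carries out by solving the kernel equations. One point worth noting: the generator is not actually a complicated combination of the $w^{(j)}$'s and $v^{(j)}$'s as you anticipate --- in the paper's explicit basis the generator $v'_0$ is simply the single basis vector $v_{0,0}$ of the $0$-th copy of $Q(a)$. What is nontrivial is showing that the $U(\slt)$-span of this single vector has the correct dimension $(r+2)p - a - 1$ and carries the Weyl-module structure, and verifying that requires tracking how the $e$- and $f$-strings thread through all $r+1$ copies of $Q(a)$, including the boundary terms at indices $\equiv 0 \pmod p$. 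Without carrying out that check, the proposal is a correct outline of an alternative strategy but not a proof.
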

\begin{proof}
This will be a direct computation.  We will determine the projective cover $f\colon P \to V(\lambda)$ and then set $f(x) = 0$ for an arbitrary element $x \in P$.  This will give us the relations determining $\ker f = \Omega(V(\lambda))$ which we will convert into a basis and identify with $V((r + 2)p - a - 2)$.

The indecomposable summands of $P$ are in bijective correspondence with the indecomposable summands (all simple) of the top of $V(\lambda)$, i.e.\ $V(\lambda)/\rad V(\lambda)$.  This correspondence is given as follows, if $\pi_q\colon V(\lambda) \to V(a)$ is the projection onto a summand of the top of $V(\lambda)$ then the projective cover $\phi_a\colon Q(a) \to V(a)$ factors through $\pi_q$.
\[\xymatrix{Q(a) \ar[rr]^{\phi_a} \ar@{-->}[dr]_{f_q} && V(a) \\ & V(\lambda) \ar[ur]_{\pi_q}}\]
The map $f_q\colon Q(a) \to V(\lambda)$ so defined is the restriction of $f\colon P \to V(\lambda)$ to the summand $Q(a)$ of $P$.

The module $V(\lambda)$ fits into a short exact sequence
\[0 \to V(p - a - 2)^{\oplus r + 1} \to V(\lambda) \overset{\pi}{\to} V(a)^{\oplus r + 1} \to 0\]
where $\pi$ has components $\pi_q$ for $q = 0, 1, \ldots, r$.  Each $\pi_q\colon V(\lambda) \to V(a)$ is given by
\[v_i \mapsto \begin{cases} v_{i - qp} & \text{if} \ qp \leq i \leq qp + a \\ 0 & \text{otherwise.} \end{cases}\]
Hence the top of $V(\lambda)$ is $V(a)^{\oplus r + 1}$ and $P = Q(a)^{\oplus r + 1}$.  Recall that $Q(a)$ has basis $\set{v_0, v_1, \ldots, v_{2p - a - 2}} \cup \set{w_{p - a - 1}, w_{p - a}, w_{p - 1}}$.  The map $f_q$ is uniquely determined up to a nonzero scalar and is given by
\begin{align*}
v_i & \mapsto -(a + 1)^2\binom{p - i - 1}{a + 1}v_{(q - 1)p + a + i + 1} && \text{if} \ 0 \leq i \leq p - a - 2, \\
w_i & \mapsto (-1)^{i + a}\binom{a}{i + a + 1 - p}^{-1}v_{(q - 1)p + a + i + 1} && \text{if} \ p - a - 1 \leq i \leq p - 1, \\
v_i & \mapsto 0 && \text{if} \ p - a - 1 \leq i \leq p - 1, \\
v_i & \mapsto (-1)^{a + 1}(a + 1)^2\binom{i + a + 1 - p}{a + 1}v_{(q - 1)p + a + i + 1} && \text{if} \ p \leq i \leq 2p - a - 2.
\end{align*}
This gives $f = [f_0 \ f_1 \ \cdots \ f_r]$.  To distinguish elements from different summands of $Q(a)^{\oplus r + 1}$ let $\set{v_{q,0}, v_{q,1}, \ldots, v_{q,2p - a - 2}} \cup \set{w_{q,p - a - 1}, w_{q,p - a}, w_{q,p - 1}}$ be the basis of the $q^\text{th}$ summand of $Q(a)^{\oplus r + 1}$.  Then any element of the cover can be written in the form
\[x = \sum_{q = 0}^r\left[\sum_{i = 0}^{2p - a - 2}c_{q,i}v_{q,i} + \sum_{i = p - a - 1}^{p - 1}d_{q,i}w_{q,i}\right].\]
for some $c_{q,i}, d_{q,i} \in k$.  Applying $f$ gives
\begin{align*}
f(x) = &\sum_{q = 0}^r\Bigg[-(a + 1)^2\sum_{i = 0}^{p - a - 2}\binom{p - i - 1}{a + 1}c_{q,i}v_{(q - 1)p + a + i + 1} \\
&\hspace{24pt} + (-1)^{a + 1}(a + 1)^2\sum_{i = p}^{2p - a - 2}\binom{i + a + 1 - p}{a + 1}c_{q,i}v_{(q - 1)p + a + i + 1} \\
&\hspace{24pt} + \sum_{i = p - a - 1}^{p - 1}(-1)^{a + i}\binom{a}{i + a + 1 - p}^{-1}d_{q,i}v_{(q - 1)p + a + i + 1}\Bigg]
\end{align*}
Observe that $0 \leq i \leq p - a - 2$ and $p \leq i \leq 2p - a - 2$ give $a + 1 \leq a + i + 1 \leq p - 1$ and $p + a + 1 \leq a + i + 1 \leq 2p - 1$ respectively, whereas $p - a - 1 \leq i \leq p - 1$ gives $p \leq a + i + 1 \leq p + a$.  Looking modulo $p$ we see that the basis elements $v_{(q - 1)p + a + i + 1}$, for $0 \leq q \leq r$ and $p - a - 1 \leq i \leq p - 1$, are linearly independent.  Thus $f(x) = 0$ immediately yields $d_{q,i} = 0$ for all $q$ and $i$.

Now rearranging we have
\begin{align*}
f(x) =& \sum_{q = 0}^r\sum_{i = 0}^{p - a - 2}\Bigg[(-1)^{a + 1}(a + 1)^2\binom{i + a + 1}{i}c_{q,i + p}v_{qp + a + 1 + i} \\
& -(a + 1)^2\binom{p - i - 1}{a + 1}c_{q,i}v_{(q - 1)p + a + 1 + i}\Bigg] \\
=& -(a + 1)^2\sum_{i = 0}^{p - a - 2}\Bigg[\sum_{q = 0}^{r - 1}(-1)^a\binom{i + a + 1}{i}c_{q,i + p}v_{qp + a + 1 + i} \\
& + \sum_{q = 1}^r\binom{p - i - 1}{a + 1}c_{q,i}v_{(q - 1)p + a + 1 + i}\Bigg] \\
=& \sum_{q = 0}^{r - 1}\sum_{i = 0}^{p - a - 2}\Bigg[(-1)^a\binom{i + a + 1}{i}c_{q,i + p} + \binom{p - i - 1}{a + 1}c_{q + 1,i}\Bigg]v_{qp + a + 1 + i}
\end{align*}
so the kernel is defined by choosing $c_{q,i}$, for $0 \leq q \leq r - 1$ and $0 \leq i \leq p - a - 2$, such that
\[(-1)^a\binom{i + a + 1}{i}c_{q,i + p} + \binom{p - i - 1}{a + 1}c_{q + 1,i} = 0.\]
Note that
\[\frac{\binom{p - i - 1}{a + 1}}{\binom{i + a + 1}{i}} = \frac{\binom{p - 1}{i + a + 1}}{\binom{p - 1}{i}} = \frac{(-1)^{i + a + 1}}{(-1)^i} = (-1)^{a + 1}\]
so the above equation simplifies to
\[c_{q,i + p} = c_{q + 1,i}.\]
Thus for $0 \leq i \leq (r + 2)p - a - 2$ the vectors
\[v_i^\prime = \begin{cases} v_{0,i} & \text{if} \ 0 \leq i < p, \\
v_{q,b} + v_{q - 1,p + b} & \text{if} \ 1 \leq q \leq r, \ 0 \leq b \leq p - a - 2, \\
v_{q,b} & \text{if} \ 1 \leq q \leq r, \ p - a - 1 \leq b < p, \\
v_{r,b} & \text{if} \ q = r + 1, \ 0 \leq b \leq p - a - 2. \end{cases}\]
form a basis for the kernel, where $i = qp + b$ with $0 \leq b < p$ the remainder of $i$ modulo $p$.  It is now trivial to check that the $\slt$-action on this basis is identical to that of $V((r + 2)p - a - 2)$.
\end{proof}

\section{Matrix Theorems} \label{secMatThms}

In this section we determine the kernel of four particular maps between free $k[s, t]$-modules.  While these maps are used to represent sheaf homomorphisms in \Cref{secLieEx} we do not approach this section geometrically.  Instead we carry out these computations in the category of $k[s, t]$-modules.

The first map is given by the matrix $M_\varepsilon(\lambda) \in \mathbb M_{rp}(k[s, t])$ shown in \Cref{figMats}.
\begin{sidewaysfigure}[p]
\vspace{350pt}
\[\hspace{0pt}\begin{bmatrix} (a + 2)st & t^2 &&& -(a + 1)\binom{r}{1}\varepsilon^ps^2 &&& -(a + 1)\binom{r}{2}\varepsilon^{2p}s^2 &&& \cdots && -(a + 1)\binom{r}{r}\varepsilon^{rp}s^2 \\
(a + 2)s^2 & (a + 4)st & 2t^2 \\
&(a + 3)s^2 & \ddots & \ddots \\
&& \ddots & \ddots & -t^2 \\
&&& as^2 & ast & 0 \\
&&&& (a + 1)s^2 & \ddots & \ddots \\
&&&&& \ddots & \ddots & -t^2 \\
&&&&&& as^2 & ast & 0 \\
&&&&&&& (a + 1)s^2 & \ddots & \ddots \\
&&&&&&&& \ddots & \ddots & -3t^2 \\
&&&&&&&&& (a - 2)s^2 & (a - 4)st & -2t^2 \\
&&&&&&&&&& (a - 1)s^2 & (a - 2)st & -t^2 \\
&&&&&&&&&&& as^2 & ast
\end{bmatrix}\]
\caption{$M_\varepsilon(\lambda)$} \label{figMats}
\end{sidewaysfigure}
For convenience we index the rows and columns of this matrix using the integers $a + 1, a + 2, \ldots, \lambda$.  Then we can say more precisely that the $(i, j)^\text{th}$ entry of this matrix is
\[M_\varepsilon(\lambda)_{ij} = \begin{cases}
is^2 & \text{if} \ \ i = j + 1 \\
(2i - a)st & \text{if} \ \ i = j \\
(i - a)t^2 & \text{if} \ \ i = j - 1 \\
-(a + 1)\binom{r}{q}\varepsilon^{qp}s^2 & \text{if} \ \ (i, j) = (0, qp + a) \\
0 & \text{otherwise.}
\end{cases}
\]
\begin{Prop} \label{propMker}
The kernel of $M_\varepsilon(\lambda)$ is a free $k[s, t]$-module (ungraded) of rank $r$ whose basis elements are homogeneous of degree $p - a - 2$.
\end{Prop}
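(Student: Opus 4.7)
My plan is to exploit the block structure of $M_\varepsilon(\lambda)$ to reduce the problem to analyzing a single $p\times p$ tridiagonal block, and then assemble $r$ explicit kernel vectors from a common generator.

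Partitioning the rows and columns of $M_\varepsilon(\lambda)$ into $r$ consecutive blocks of size $p$ (block $q$ containing indices $qp+a+1,\ldots,(q+1)p+a$) makes the matrix block upper triangular. The $(0,0)$-block $A_0$ is obtained from a common tridiagonal $p\times p$ matrix $A$ by adding $-(a+1)r\varepsilon^p s^2$ at position $(1,p)$; the remaining $r-1$ diagonal blocks equal $A$ exactly; and the off-diagonal blocks $C_q$ lie only in the first block-row, each with a single nonzero entry $-(a+1)\binom{r}{q+1}\varepsilon^{(q+1)p}s^2$ at position $(1,p)$.

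Setting $N=p-a-2$, I will propose $g_k=\binom{N}{k-1}s^{k-1}t^{N-k+1}$ for $1\leq k\leq N+1$ (extended by $g_k=0$ for $k>N+1$) as a generator of $\ker A$; every nonzero entry is homogeneous of total degree $N$. To verify $Ag=0$, the interior rows of $A$ reduce to the three-term recursion $k\,c_{k+1}+(a+2k)\,c_k+(a+k)\,c_{k-1}=0$ on $c_k=\binom{N}{k-1}$, which follows from the characteristic-$p$ identity $(a+k)\bigl[(a+k+1)-(a+2k)+(k-1)\bigr]=0$. The top row cancels via $a+2+N\equiv 0\pmod p$, and the bottom row is automatic because the relevant entries of $g$ vanish. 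Crucially, $g_p=0$.

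For each $q=0,1,\ldots,r-1$ let $u_q\in k[s,t]^{rp}$ be the vector placing $g$ in block $q$ and zero elsewhere. The diagonal-block equation $Ag=0$ (or $A_0g=Ag-(a+1)r\varepsilon^p s^2 g_p e_1=0$ when $q=0$) holds, and every off-diagonal contribution $C_{q'}g$ is a scalar multiple of $g_p e_1=0$, so each $u_q\in\ker M_\varepsilon(\lambda)$. These $r$ vectors have pairwise disjoint supports, so they are $k[s,t]$-linearly independent and generate a rank-$r$ submodule $K\subseteq\ker M_\varepsilon(\lambda)$, which is saturated in $k[s,t]^{rp}$ because the first and last nonzero entries of $g$ are the coprime monomials $t^N$ and $s^N$. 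I will then invoke that the generic local Jordan type of $\Phi_\xi(\lambda)$ is $[p]^r$ (from the table in \Cref{secSl2}), so the generic rank of $M_\varepsilon(\lambda)$ is $r(p-1)$ and $\ker M_\varepsilon(\lambda)$ has generic rank $r$, matching that of $K$; hence $\ker M_\varepsilon(\lambda)/K$ is simultaneously torsion and a submodule of the torsion-free module $k[s,t]^{rp}/K$, so it vanishes. The main obstacle I foresee is the careful characteristic-$p$ verification of $Ag=0$—particularly the combinatorial identity behind the recursion and the two boundary-row checks—and making the saturation argument precise, although the latter becomes transparent once the coprimality of the extreme nonzero entries of $g$ is noticed.
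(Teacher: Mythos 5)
Your construction is essentially right, but one of its supporting claims — that the chosen block partition makes $M_\varepsilon(\lambda)$ block upper triangular — is false, and this leaves a (small, repairable) gap in the verification that each $u_q$ lies in $\ker M_\varepsilon(\lambda)$. With blocks $\{qp+a+1,\ldots,(q+1)p+a\}$, the subdiagonal entry of $M_\varepsilon(\lambda)$ at row $(q+1)p+a+1$, column $(q+1)p+a$ is $(a+1)s^2$, which is nonzero since $a\neq p-1$, and it lies strictly below the block diagonal (block $(q+1,q)$, local position $(1,p)$). So the matrix has nonzero blocks both above the diagonal (your $C_{q'}$, carrying the $\varepsilon$-terms) and below it. Your check covers only the $C_{q'}$ contributions. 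Fortunately the same mechanism rescues the below-diagonal blocks: their unique nonzero entry also multiplies the $p$-th coordinate of $g$, which is zero, so their contribution vanishes as well. Adding that one observation closes the gap.

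Beyond that, the route is genuinely different from the paper's and otherwise sound. The paper inverts $s$, passes to $\tfrac{1}{s^2}M_\varepsilon(\lambda)$ over $k[s,s^{-1},t]$, solves the system row by row (this also forces the extra parameter $f_\lambda$ to vanish), and then clears denominators. You instead produce a closed-form kernel vector $g_k=\binom{p-a-2}{k-1}s^{k-1}t^{p-a-1-k}$ for the common tridiagonal block — after rescaling it is exactly the paper's $s^{p-a-2}H_q$ — and verify $Ag=0$ directly via the binomial recursion (which, using $N+a+2\equiv 0 \bmod p$, does check out on the interior and boundary rows). Disjoint supports give freeness of $K$; coprimality of $s^N$ and $t^N$ gives saturation; and matching the generic rank of $\ker M_\varepsilon(\lambda)$ with $\operatorname{rank} K = r$ uses the Jordan type $[p]^r$ of $\Phi_\xi(\lambda)$ from \cref{secSl2}. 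That invocation is not circular (the paper derives it from $B'(\lambda)$ and the adjoint action, not from $M_\varepsilon(\lambda)$), but it does make your proof less self-contained than the paper's, which stays entirely inside the linear algebra of $M_\varepsilon(\lambda)$. In exchange, your argument is shorter once the pattern $g$ is recognized.
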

\begin{proof}
The strategy is as follows: First we will determine the kernel of $M_\varepsilon(\lambda)$ when considered as a map of $k[s, \frac{1}{s}, t]$-modules.  We do this by exhibiting a basis $H_1, \ldots, H_r$ via a direct calculation.  Then by clearing the denominators from these basis elements we get a linearly independent set of vectors in the $k[s, t]$-kernel of $M_\varepsilon(\lambda)$.  We conclude by arguing that these vectors in fact span, thus giving an explicit basis for the kernel of $M_\varepsilon(\lambda)$ considered as a map of $k[s, t]$-modules.

To begin observe that $s$ is a unit in $k[s, \frac{1}{s}, t]$,
\begin{sidewaysfigure}[p]
\vspace{350pt}
\[\begin{bmatrix} (a + 2)x & x^2 &&& -(a + 1)\binom{r}{1}\varepsilon^p &&& -(a + 1)\binom{r}{2}\varepsilon^{2p} &&& \cdots && -(a + 1)\binom{r}{r}\varepsilon^{rp} \\
a + 2 & (a + 4)x & 2x^2 \\
& a + 3 & \ddots & \ddots \\
&& \ddots & \ddots & -x^2 \\
&&& a & ax & 0 \\
&&&& a + 1 & \ddots & \ddots \\
&&&&& \ddots & \ddots & -x^2 \\
&&&&&& a & ax & 0 \\
&&&&&&& a + 1 & \ddots & \ddots \\
&&&&&&&& \ddots & \ddots & -3x^2 \\
&&&&&&&&& a - 2 & (a - 4)x & -2x^2 \\
&&&&&&&&&& a - 1 & (a - 2)x & -x^2 \\
&&&&&&&&&&& a & ax
\end{bmatrix}\]
\caption{$\frac{1}{s^2}M_\varepsilon(\lambda)$} \label{figxMat}
\end{sidewaysfigure}
thus over this ring the kernel of $M_\varepsilon(\lambda)$ is equal to the kernel of the matrix $\frac{1}{s^2}M_\varepsilon(\lambda)$ (shown in \Cref{figxMat}) with $(i, j)^\text{th}$ entries
\[\frac{1}{s^2}M_\varepsilon(\lambda)_{ij} = \begin{cases}
i & \text{if} \ \ i = j + 1 \\
(2i - a)x & \text{if} \ \ i = j \\
(i - a)x^2 & \text{if} \ \ i = j - 1 \\
-(a + 1)\binom{r}{q}\varepsilon^{qp} & \text{if} \ \ (i, j) = (0, qp + a) \\
0 & \text{otherwise.}
\end{cases}\]
where $x = \frac{t}{s}$.  Let
\[f = \begin{bmatrix} f_{a + 1} \\ f_{a + 2} \\ \vdots \\ f_\lambda \end{bmatrix}\]
be an arbitrary element of the kernel.  Given $i = qp + b$ where $0 \leq b < p$ and $a + 1 \leq i \leq \lambda$ we claim that
\begin{equation} \label{eqPhiForm}
	f_i = (-1)^{\lambda - i}x^{\lambda - i}f_\lambda + (-1)^b\binom{p + a - b}{p - b - 1}x^{p - b - 1}h_{q + 1}
\end{equation}
for some choice of $h_1, \ldots, h_r \in k[s, \frac{1}{s}, t]$ and $h_{r + 1} = 0$.  Moreover, any such choice defines an element $f \in k[s, \frac{1}{s}, t]^{rp}$ such that
\begin{equation} \label{eqQuasKer}
	\frac{1}{s^2}M_\varepsilon(\lambda)f = \begin{bmatrix} \ast \\ 0 \\ \vdots \\ 0 \end{bmatrix}
\end{equation}
holds.

The proof of this claim is by completely elementary methods, we simply induct up the rows of $\frac{1}{s^2}M_\varepsilon(\lambda)$ observing that the condition imposed by each row in \autoref{eqQuasKer} either determines the next $f_i$ or is automatically satisfied allowing us to introduce a free parameter (the $h_i$).

For the base case plugging $i = \lambda$ into \Cref{eqPhiForm} gives $f_\lambda = f_\lambda$.  The condition imposed by the last row in \Cref{eqQuasKer} is $af_{\lambda - 1} + axf_\lambda = 0$ so if $a \neq 0$ then $f_{\lambda - 1} = -xf_\lambda$ and if $a = 0$ then this condition is automatically satisfied.  The formula, when $a = 0$, gives $f_{rp - 1} = -xf_\lambda + h_r$ so we take this as the definition of $h_r$.

Assume the formula holds for all $f_j$ with $j > i \geq a + 1$ and that these $f_j$ satisfy the conditions imposed by rows $i + 2, i + 3, \ldots, \lambda$ of $\frac{1}{s^2}M_\varepsilon(\lambda)$.  Row $i + 1$ has nonzero entries $i + 1$, $(2i - a + 2)x$, and $(i - a + 1)x^2$ in columns $i$, $i + 1$, and $i + 2$ respectively.  First assume $i + 1 \neq 0$ in $k$ or equivalently $b \neq p - 1$ where $i = qp + b$ and $0 \leq b < p$.  Then the condition
\[(i + 1)f_i + (2i - a + 2)xf_{i + 1} + (i - a + 1)x^2f_{i + 2} = 0\]
imposed by row $i + 1$ can be taken as the definition of $f_i$.  Observe that
\begin{align*}
&\frac{-1}{i + 1}\left((-1)^{\lambda - i - 1}(2i - a + 2) + (-1)^{\lambda - i - 2}(i - a + 1)\right) \\
&\qquad = \frac{(-1)^{\lambda - i}}{i + 1}\left((2i - a + 2) - (i - a + 1)\right) \\
&\qquad = \frac{(-1)^{\lambda - i}}{i + 1}\left(i + 1\right) \\
&\qquad = (-1)^{\lambda - i}
\end{align*}
so $f_i = (-1)^{\lambda - i}x^{\lambda - i}f_\lambda + (\text{terms involving } h_j)$.  For the $h_j$ terms there are two cases.  First assume $b < p - 2$.  Then using $\frac{c}{e}\binom{c - 1}{e - 1} = \binom{c}{e}$ we see that
\begin{align*} \label{eqnBinom}
&\frac{-1}{i + 1}\left((-1)^{b + 1}(2i - a + 2)\binom{p + a - b - 1}{p - b - 2} + (-1)^{b + 2}(i - a + 1)\binom{p + a - b - 2}{p - b - 3}\right) \\
&\qquad = \frac{(-1)^b}{b + 1}\left((2b - a + 2)\binom{p + a - b - 1}{p - b - 2} + (p + a - b - 1)\binom{p + a - b - 2}{p - b - 3}\right) \\
&\qquad = \frac{(-1)^b}{b + 1}\left((2b - a + 2)\binom{p + a - b - 1}{p - b - 2} + (p - b - 2)\binom{p + a - b - 1}{p - b - 2}\right) \\
&\qquad = \frac{(-1)^b}{b + 1}(b - a)\binom{p + a - b - 1}{p - b - 2} \\
&\qquad = \frac{(-1)^b}{b + 1}(b + 1)\binom{p + a - b}{p - b - 1} \\
&\qquad = (-1)^b\binom{p + a - b}{p - b - 1}.
\end{align*}
Putting these together we get that
\begin{align*}
f_i &= \frac{-1}{i + 1}((2i - a + 2)xf_{i + 1} + (i - a + 1)x^2f_{i + 2}) \\
&= (-1)^{\lambda - i}x^{\lambda - i}f_\lambda + (-1)^b\binom{p + a - b}{p - b - 1}x^{p - b - 1}h_{q + 1}
\end{align*}
as desired.  Next assume $b = p - 2$ so that $f_{i + 2} = f_{(q + 1)p}$.  The coefficient of $h_{q + 2}$ in $f_{(q + 1)p}$ involves the binomial $\binom{p + a}{p - 1}$.  As $0 \leq a < p - 1$ there is a carry when the addition $(p - 1) + (a + 1) = p + a$ is done in base $p$, thus this coefficient is zero and the $h_j$ terms of $f_i$ are
\begin{align*}
\frac{(-1)^{p}}{i + 1}(2i - a + 2)\binom{a + 1}{0}xh_{q + 1} &= \frac{(-1)^{p - 2}}{2}(a + 2)xh_{q + 1} \\
&= (-1)^b\binom{a + 2}{1}xh_{q + 1}
\end{align*}
as desired.  Thus the induction continues when $i + 1 \neq 0$ in $k$.

Now assume $i + 1 = 0$ in $k$; equivalently, $b = p - 1$.  Then the condition imposed by row $i + 1 = (q + 1)p$ is
\[- axf_{(q + 1)p} - ax^2f_{(q + 1)p + 1} = 0.\]
If $a = 0$ then this is automatic.  If $a > 0$ then there is a base $p$ carry in the addition $(p - 2) + (a + 1) = p + a - 1$, hence
\begin{align*}
&xf_{(q + 1)p} + x^2f_{(q + 1)p + 1} \\
&\qquad = (-1)^{(r - q - 1)p + a}x^{(r - q - 1)p + a + 1}f_\lambda + (-1)^{(r - q - 1)p + a - 1}x^{(r - q - 1)p + a + 1}g \\
&\qquad\quad - \binom{p + a - 1}{p - 2}x^ph_{q + 2} \\
&\qquad = 0
\end{align*}
because the $f_\lambda$ terms cancel and the binomial is zero.  So in either case the condition above is automatic.  The formula for $f_i$ when $i = qp + (p - 1)$ is
\[f_i = (-1)^{(r - q - 1)p + a + 1}x^{(r - q - 1)p + a + 1}g + h_{q + 1}\]
so we take this as the definition of $h_{q + 1}$ and the induction is complete.

Now $f$ must have the given form for some choice of $h_1, \ldots, h_r$ and any such choice gives an element $f$ such that $\frac{1}{s^2}M_\varepsilon(\lambda)f$ is zero in all coordinates save the top ($a + 1$).  All that is left is to impose the condition that $\frac{1}{s^2}M_\varepsilon(\lambda)f$ is zero in the $(a + 1)^\text{th}$ coordinate as well.  This condition is
\[(a + 2)xf_{a + 1} + x^2f_{a + 2} - (a + 1)\sum_{q = 1}^r\binom{r}{q}\varepsilon^{qp}f_{qp + a} = 0.\]
In $(a + 2)xf_{a + 1} + x^2f_{a + 2}$ the $h_j$ terms are
\begin{align*}
&(-1)^{a + 1}\left((a + 2)\binom{p - 1}{p - a - 2} - \binom{p - 2}{p - a - 3}\right)x^{p - a - 1}h_1 \\
&\qquad = (-1)^{a + 1}\left((a + 2)\binom{p - 1}{p - a - 2} + (p - a - 2)\binom{p - 1}{p - a - 2}\right)x^{p - a - 1}h_1 \\
&\qquad = 0
\end{align*}
and the coefficient of the $h_j$ term in $f_{qp + a}$ involves the binomial $\binom{p}{p - a - 1}$ which is zero.  Thus the top row imposes a condition only on $f_\lambda$, and this condition is
\begin{align*}
0 &= (-1)^{rp - 1}(a + 2)x^{rp}f_\lambda + (-1)^{rp - 2}x^{rp}f_\lambda \\
&\quad - (a + 1)\sum_{q = 1}^r(-1)^{(r - q)p}\binom{r}{q}\varepsilon^{qp}x^{(r - q)p}f_\lambda \\
&= (-1)^{rp - 1}(a + 1)\left[\sum_{q = 0}^r(-1)^{qp}\binom{r}{q}\varepsilon^{qp}x^{(r - q)p}\right]f_\lambda.
\end{align*}
Note that $x = \frac{t}{s}$ is algebraically independent over $k$ in $k[s, \frac{1}{s}, t]$ and by hypothesis $a + 1 \neq 0$ in $k$.  The localization of an integral domain is again an integral domain therefore if $f$ is in the kernel then we must have $f_\lambda = 0$.

As the $h_1, \ldots, h_r$ can be chosen arbitrarily this completes the determination of the kernel of $M_\varepsilon(\lambda)$, considered as a map of $k[s, \frac{1}{s}, t]$-modules.  It is free of rank $r$ and the basis elements are given by taking the coefficients of these $h_q$ in \Cref{eqQuasKer}.  Let $H_q$ be the basis element that corresponds to $h_q$; it is shown in \Cref{figHq}.
\begin{sidewaysfigure}[p]
\vspace{400pt}
\[\begin{matrix} a + 1 \\ \vdots \\ qp - 1 \\ qp \\ qp + 1 \\ \vdots \\ qp + a \\ qp + a + 1 \\ \vdots \\ (q + 1)p - 2 \\ (q + 1)p - 1 \\ (q + 1)p \\ \vdots \\ \lambda \end{matrix}\begin{bmatrix} 0 \\ \vdots \\ 0 \\ \binom{a + p}{p - 1}x^{p - 1} \\ -\binom{a + p - 1}{p - 2}x^{p - 2} \\ \vdots \\ (-1)^{p - a - 1}\binom{p}{p - a - 1}x^{p - a - 1} \\ (-1)^{p - a - 2}\binom{p - 1}{p - a - 2}x^{p - a - 2} \\ \vdots \\ -\binom{a + 2}{1}x \\ \binom{a + 1}{0} \\ 0 \\ \vdots \\ 0 \end{bmatrix} = \begin{bmatrix} 0 \\ \vdots \\ 0 \\ 0 \\ 0 \\ \vdots \\ 0 \\ (-1)^{p - a - 2}\binom{p - 1}{p - a - 2}x^{p - a - 2} \\ \vdots \\ -(a + 2)x \\ 1 \\ 0 \\ \vdots \\ 0 \end{bmatrix} \overset{s^{p - a - 2}}{\longrightarrow} \begin{bmatrix} 0 \\ \vdots \\ 0 \\ (-1)^{p - a - 2}\binom{p - 1}{p - a - 2}t^{p - a - 2} \\ \vdots \\ -(a + 2)s^{p - a - 3}t \\ s^{p - a - 2} \\ 0 \\ \vdots \\ 0 \end{bmatrix}\begin{matrix} a + 1 \\ \vdots \\ qp + a \\ qp + a + 1 \\ \vdots \\ (q + 1)p - 2 \\ (q + 1)p - 1 \\ (q + 1)p \\ \vdots \\ \lambda \end{matrix}\]
\caption{$H_q \to s^{p - a - 2}H_q$} \label{figHq}
\end{sidewaysfigure}
I claim that $s^{p - a - 2}H_q$, for $1 \leq q \leq r$, is a basis for the kernel of $M_\varepsilon(\lambda)$, considered as a map of $k[s, t]$-modules.

First note that $H_q$ is supported in coordinates $(q + 1)p - 1$ through $qp + a + 1$.  These ranges are disjoint for different $H_q$ therefore the $s^{p - a - 2}H_q$ are clearly linearly independent.  Let $f \in k[s, t]^{rp}$ be an element of the kernel of $M_\varepsilon(\lambda)$.  Then as an element of $k[s, \frac{1}{s}, t]$ we have that $f$ is in the kernel of $\frac{1}{s^2}M_\varepsilon(\lambda)$ and can write
\[f = \sum_{q = 1}^rc_qH_q.\]
where $c_q \in k[s, \frac{1}{s}, t]$.  The $((q + 1)p - 1)^\text{th}$ coordinate of $f$ is $c_q$ hence $c_q \in k[s, t]$.  Also the $(qp + a + 1)^\text{th}$ coordinate of $f$ is
\[(-1)^{p - a - 2}\binom{p - 1}{p - a - 2}c_qx^{p - a - 2}\]
and the binomial coefficient in that expression is nonzero in $k$ so $c_qx^{p - a - 2} \in k[s, t]$.  In particular, $s^{p - a - 2}$ must divide $c_q$ so write $c_q = s^{p - a - 2}c^\prime_q$ for some $c^\prime_q \in k[s, t]$.  We now have
\[f = \sum_{q = 1}^rc^\prime_qs^{p - a - 2}H_q\]
so the $s^{p - a - 2}H_q$ span and are therefore a basis.  Each $H_q$ is homogeneous of degree $0$ so each $s^{p - a - 2}H_q$ is homogeneous of degree $p - a - 2$.
\end{proof}

The second map we wish to consider is given by the matrix $B(\lambda) \in \mathbb M_{\lambda + 1}(k[s, t])$ shown in \Cref{figMatB}.
\begin{sidewaysfigure}[p]
\vspace{350pt}
\[\begin{bmatrix} \lambda st & \lambda s^2 \\
-t^2 & (\lambda - 2)st & (\lambda - 1)s^2 \\
& -2t^2 & (\lambda - 4)st & (\lambda - 2)s^2 \\
&& -3t^2 & \ddots & \ddots \\
&&& \ddots & \ddots & (\lambda + 2)s^2 \\
&&&& t^2 & (\lambda + 2)st & (\lambda + 1)s^2 \\
&&&&& 0 & \lambda st & \lambda s^2 \\
&&&&&& -t^2 & \ddots & \ddots \\
&&&&&&& \ddots & \ddots & 3s^2 \\
&&&&&&&& -(\lambda - 2)t^2 & -(\lambda - 4)st & 2s^2 \\
&&&&&&&&& -(\lambda - 1)t^2 & -(\lambda - 2)st & s^2 \\
&&&&&&&&&& -\lambda t^2 & -\lambda st
\end{bmatrix}\]
\caption{$B(\lambda)$} \label{figMatB}
\end{sidewaysfigure}
Index the rows and columns of this matrix using the integers $0, 1, \ldots, \lambda$.  Then the entries of $B(\lambda)$ are
\[B(\lambda)_{ij} = \begin{cases}
-it^2 & \text{if} \ \ i = j + 1 \\
(\lambda - 2i)st & \text{if} \ \ i = j \\
(\lambda - i)s^2 & \text{if} \ \ i = j - 1 \\
0 & \text{otherwise.}
\end{cases}\]
\begin{Prop} \label{propBker}
The kernel of $B(\lambda)$ is a free $k[s, t]$-module of rank $r + 1$.  There is one basis element that is homogeneous of degree $\lambda$ and the remaining are homogeneous of degree $p - a - 2$.
\end{Prop}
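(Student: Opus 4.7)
The plan is to follow the blueprint of \Cref{propMker}. Localising to $k[s, s^{-1}, t]$ and writing $x = t/s$, I factor $s^2$ out of $B(\lambda)$ to get a tridiagonal matrix $\bar B(\lambda)$ with entries in $k[x]$; any $f \in \ker B(\lambda)$ then satisfies the row-by-row recurrence
\[(\lambda - i)\, f_{i+1} = i x^2 f_{i-1} - (\lambda - 2i)\, x f_i, \quad 0 \le i \le \lambda,\]
with $f_{-1} = 0$. This recurrence uniquely determines $f_{i+1}$ from $f_{i-1}, f_i$ whenever $\lambda - i \not\equiv 0 \pmod p$.

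The \emph{singular rows}, those with $\lambda - i \equiv 0 \pmod p$, are exactly $i \in \{a, p + a, \ldots, rp + a = \lambda\}$. At each singular row $i < \lambda$ the coefficient of $f_{i+1}$ vanishes, so $f_{i+1}$ becomes a new free parameter and (using $\lambda - 2i \equiv -a \pmod p$) the row reduces to the residual identity $a(x f_{i-1} + f_i) = 0$. Counting these together with the free choice of $f_0$ gives $r + 1$ free parameters, which I label $h_0 = f_0$ and $h_q = f_{(q-1)p + a + 1}$ for $q = 1, \ldots, r$. The crux of the argument—and the step I expect to be the main obstacle—is verifying that each of the $r$ residual identities at the intermediate singular rows, and the final identity at $i = \lambda$, holds automatically for the recursively generated $f_j$. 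I would establish this block by block: within the window $qp + a < i \le (q+1)p + a$ the recurrence is driven by $h_0, \ldots, h_{q+1}$ and solves explicitly in $x$, after which a binomial identity parallel to the one appearing in the proof of \Cref{propMker} forces $x f_{(q+1)p + a - 1} + f_{(q+1)p + a} = 0$. The hypothesis $p \nmid \lambda + 1$ (equivalently $a \ne p - 1$, so that $V(\lambda)$ is indecomposable) is needed at this step to prevent the identity from degenerating.

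To finish, I clear denominators and read off the degrees. The basis vector from $h_0 = 1$ (other parameters zero) has $i$th entry of the form $\alpha_i x^i$ with $\alpha_i \in k$, so multiplying by $s^\lambda$ produces a homogeneous vector of degree $\lambda$ whose leading coordinate is $s^\lambda$. For each $q \ge 1$, the basis vector from $h_q = 1$ is supported in a short window near $qp + a + 1$ (the vanishing of the neighbouring free coordinates forces the recursion to terminate early), and multiplication by $s^{p - a - 2}$ yields a homogeneous vector of degree $p - a - 2$. Linear independence is immediate from the distinct positions of the free coordinates; spanning follows by the same coordinate-inspection argument used at the end of \Cref{propMker}, which forces the expansion coefficients of any $f \in \ker B(\lambda) \subseteq k[s, t]^{\lambda + 1}$ to lie in $k[s, t]$ with the prescribed degree shifts.
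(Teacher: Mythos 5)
Your proposal is correct and takes essentially the same route as the paper: localize at $s$, reduce to a tridiagonal recurrence over $k[x]$ with $x = t/s$, march through the rows identifying the $r+1$ rows where $\lambda - i \equiv 0 \pmod p$ as the source of free parameters, verify the residual binomial identities at those rows (and at $i = \lambda$), then clear denominators and read off degrees $\lambda$ and $p - a - 2$ by a coordinate-inspection argument. This is precisely what \cref{propBker} does (leaning on the explicit coefficient formula and binomial computations already established in \cref{propMker}), so nothing essential differs.
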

\begin{proof}
The proof is very similar to the proof of \cref{propMker}.  We start by finding the kernel of the matrix $\frac{1}{s^2}B(\lambda)$ shown in \Cref{figMatBx}
\begin{sidewaysfigure}[p]
\vspace{350pt}
\[\begin{bmatrix} \lambda x & \lambda \\
-x^2 & (\lambda - 2)x & \lambda - 1 \\
& -2x^2 & (\lambda - 4)x & \lambda - 2 \\
&& -3x^2 & \ddots & \ddots \\
&&& \ddots & \ddots & \lambda + 2 \\
&&&& x^2 & (\lambda + 2)x & \lambda + 1 \\
&&&&& 0 & \lambda x & \lambda \\
&&&&&& -x^2 & \ddots & \ddots \\
&&&&&&& \ddots & \ddots & 3 \\
&&&&&&&& -(\lambda - 2)x^2 & -(\lambda - 4)x & 2 \\
&&&&&&&&& -(\lambda - 1)x^2 & -(\lambda - 2)x & 1 \\
&&&&&&&&&& -\lambda x^2 & -\lambda x
\end{bmatrix}\]
\caption{$\frac{1}{s^2}B(\lambda)$} \label{figMatBx}
\end{sidewaysfigure}
whose entries are given by
\[\frac{1}{s^2}B(\lambda)_{ij} = \begin{cases}
-ix^2 & \text{if} \ \ i = j + 1 \\
(\lambda - 2i)x & \text{if} \ \ i = j \\
\lambda - i & \text{if} \ \ i = j - 1 \\
0 & \text{otherwise.}
\end{cases}\]
with $x = \frac{t}{s}$.  Let
\[f = \begin{bmatrix} f_0 \\ f_1 \\ \vdots \\ f_\lambda \end{bmatrix}\]
be an arbitrary element of the kernel.  We induct down the rows of the matrix to show that if $i = qp + b$, where $0 \leq b < p$ then
\[f_{\lambda - i} = (-1)^{\lambda - i}x^{\lambda - i}g + (-1)^b\binom{p + a - b}{p - b - 1}x^{p - b - 1}h_q\]
where $h_r = 0$.

For the base case $i = \lambda$ in the formula gives $f_0 = g$ so we take this as the definition of $g$.  The condition imposed by the first row is $axg + af_1 = 0$ so if $a \neq 0$ then $f_1 = -xg$.  The formula gives $f_1 = -xg + (-1)^a - 1\binom{p + 1}{p - a}x^{p - a}h_r = -xg$ so these agree.  If $a = 0$ then the condition is automatically satisfied and the formula gives $f_1 = -xg + h_{r - 1}$ so we take this as the definition of $h_{r - 1}$.

For the inductive step assume the formula holds for $f_0, f_1, \ldots, f_{i - 1}$ and that these $f_j$ satisfy the conditions imposed by rows $0, \ldots, \lambda - i - 2$.  The three nonzero entries in row $\lambda - i - 1$ are $(b - a + 1)x^2$, $(2b - a + 2)x$, and $b + 1$ in columns $\lambda - i - 2$, $\lambda - i - 1$, and $\lambda - i$ respectively, thus the condition imposed is
\[(b - a + 1)x^2f_{\lambda - i - 2} + (2b - a + 2)xf_{\lambda - i - 1} + (b + 1)f_{\lambda - i} = 0.\]
If $b < p - 2$ then we can solve this for $f_{\lambda - i}$ and we find that it agrees with the formula above (for the $h_j$ terms the computation is identical to the one shown in \cref{propMker}).  If $b = p - 2$ we get
\begin{align*}
f_{\lambda - i} &= -(a + 1)x^2f_{\lambda - i - 2} - (a + 2)xf_{\lambda - i - 1} \\
&= (-1)^{\lambda - i - 1}(a + 1)x^{\lambda - i}g + (-1)^{\lambda - i}(a + 2)x^{\lambda - i}g - (a + 2)h_q \\
&= (-1)^{\lambda - i}x^{\lambda - i}g - \binom{a + 2}{1}xh_q
\end{align*}
as desired.  Finally if $b = p - 1$ then $b + 1 = 0$ in $k$ so the condition is
\[-ax^2f_{\lambda - i - 2} - axf_{\lambda - i - 1} = 0\]
and this is automatically satisfied (the formulas are the same as in \cref{propMker} again).  Thus no condition is imposed on $f_{\lambda - i}$ so we take the formula
\[f_{\lambda - i} = (-1)^{\lambda - i}x^{\lambda - i}g + h_q\]
as the definition of $h_q$.  This completes the induction.

Note that the final row to be $\lambda - i - 1$ we must choose $i = -1$ and therefore are in the case where $b + 1 = 0$ and the condition is automatically satisfied.  The rest of the proof goes as in \cref{propMker}, except that there is no final condition forcing $g = 0$.  If we let $G$ and $H_0, \ldots, H_{r - 1}$ be the basis vectors corresponding to $g$ and $h_0, \ldots, h_{r - 1}$ then the $H_q$ are linearly independent as before.  The first ($0^\text{th}$) coordinate of $G$ is $1$ while the first coordinate of each $H_q$ is $0$ therefore $G$ can be added and this gives a basis for the kernel.  The largest power of $x$ in $G$ is $\lambda$ in the last coordinate and the largest power of $x$ in $H_q$ is $p - a - 2$ in the $(\lambda - qp - a + 1)^\text{th}$ coordinate.  These basis vectors lift to basis vectors of the kernel as a $k[s, t]$-module and are in degrees $\lambda$ and $p - a - 2$ as desired.
\end{proof}

Before we move on to the third map, let us first prove the following lemma which will be needed in \cref{thmFiSimp}.
\begin{Lem} \label{lemBlambda}
Assume $0 \leq \lambda < p$.  Then the $(i, j)^\text{th}$ entry of $B(\lambda)^\lambda$ is contained in the one dimensional space $ks^{\lambda + j - i}t^{\lambda - j + i}$.
\end{Lem}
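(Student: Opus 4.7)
The plan is to prove the stronger inductive statement that for every $n \ge 1$ and every pair of indices $0 \le i, j \le \lambda$, the $(i,j)^{\text{th}}$ entry of $B(\lambda)^n$ lies in the one-dimensional subspace $k\,s^{n+j-i}t^{n-j+i} \subseteq k[s,t]$, with the convention that this subspace is $\{0\}$ whenever one of the exponents is negative. Specializing at $n = \lambda$ then yields the lemma, and for this value of $n$ both exponents $\lambda + j - i$ and $\lambda - j + i$ lie in $\{0, 1, \ldots, 2\lambda\}$ because $0 \le i, j \le \lambda$, so no degenerate cases arise in the target statement.

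The base case $n = 1$ follows by inspecting the definition of $B(\lambda)$: the subdiagonal entry $-i\,t^2$ sits in $k\,s^{0}t^{2}$, the diagonal entry $(\lambda - 2i)\,st$ sits in $k\,s^{1}t^{1}$, and the superdiagonal entry $(\lambda - i)\,s^2$ sits in $k\,s^{2}t^{0}$, matching the prescribed bidegree in each case; every remaining entry is zero and has $|i-j| \ge 2$, in which case one of the exponents is negative and the target space is $\{0\}$, so the claim is vacuous. For the inductive step, expand
\[
(B(\lambda)^{n+1})_{ij} \;=\; \sum_{k}(B(\lambda)^n)_{ik}\,B(\lambda)_{kj}.
\]
Tridiagonality of $B(\lambda)$ restricts the sum to $k \in \{j-1, j, j+1\}$, and for each such $k$ the inductive hypothesis places $(B(\lambda)^n)_{ik}$ in $k\,s^{n+k-i}t^{n-k+i}$, while the base case places $B(\lambda)_{kj}$ in $k\,s^{1+j-k}t^{1-j+k}$; adding exponents, every summand lives in $k\,s^{(n+1)+j-i}t^{(n+1)-j+i}$, and hence so does the sum.

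There is no serious obstacle: the argument is a purely combinatorial bidegree count, enabled by the observation that every nonzero entry of $B(\lambda)$ at position $(i,j)$ is a scalar multiple of $s^{1+j-i}t^{1-j+i}$, and that such scalar monomials multiply by adding their exponents. If one prefers to avoid induction, the same fact emerges from conjugating $B(\lambda)$ over $k[s, s^{-1}, t]$ by the diagonal matrix $D = \mathrm{diag}(1,\, t/s,\, (t/s)^2,\, \ldots,\, (t/s)^\lambda)$: a direct check gives $D^{-1}B(\lambda)D = st\cdot C$ where $C$ is a tridiagonal matrix of integer scalars with entries $-i$, $\lambda - 2i$, $\lambda - i$ on the sub-, main, and super-diagonals respectively, whence $(B(\lambda)^n)_{ij} = s^{n+j-i}t^{n-j+i}\,(C^n)_{ij}$ and the claim is visible on sight.
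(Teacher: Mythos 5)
Your main argument is correct and is essentially the same bidegree count the paper makes: the paper inducts on the length of a walk $(n_0, \ldots, n_m)$ through the tridiagonal band, showing that each product $b_{n_0 n_1}\cdots b_{n_{m-1}n_m}$ lies in $ks^{m + n_m - n_0}t^{m - n_m + n_0}$ and then sums over walks, whereas you induct directly on the power $n$ via $(B(\lambda)^{n+1})_{ij} = \sum_k (B(\lambda)^n)_{ik}B(\lambda)_{kj}$; these are just two ways of organizing the same expansion. One small point of care you handled correctly: the convention that $ks^a t^b = \{0\}$ when an exponent is negative is needed so the base case covers the zero entries off the tridiagonal band, and the inductive step is consistent with it because tridiagonality means $(B(\lambda)^n)_{ij}$ vanishes whenever $|i - j| > n$. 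Your alternative via conjugation is a genuinely different and cleaner route not in the paper: writing $D = \mathrm{diag}\bigl(1, t/s, \ldots, (t/s)^\lambda\bigr)$ over $k[s, s^{-1}, t, t^{-1}]$ gives $D^{-1}B(\lambda)D = st\, C$ with $C$ a constant tridiagonal matrix, so $(B(\lambda)^n)_{ij} = s^{n+j-i}t^{n-j+i}(C^n)_{ij}$ immediately. This buys a one-line proof and isolates the combinatorial content into the scalar matrix $C$, at the mild cost of passing to a localization where $s$ and $t$ are invertible (harmless, since the conclusion is a statement about polynomial entries and can be read off after clearing the conjugation).
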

\begin{proof}
Let $b_{ij}$ be the $(i, j)^\text{th}$ entry of $B(\lambda)$.  By definition the $(i, j)^\text{th}$ entry of $B(\lambda)^\lambda$ is given by
\[(B(\lambda)^\lambda)_{ij} = \sum_{n_1, n_2, \ldots, n_{\lambda - 1}}b_{in_1}b_{n_1n_2}\cdots b_{n_{\lambda - 1}j}.\]
From the definition of $B(\lambda)$ we have
\begin{align*}
b_{ij} \in ks^2 & \ \ \ \text{if} \ j - i = 1, \\ b_{ij} \in kst & \ \ \ \text{if} \ j - i = 0, \\ b_{ij} \in kt^2 & \ \ \ \text{if} \ j - i = -1, \\ b_{ij} = 0 & \ \ \ \text{otherwise.}
\end{align*}
so any given term $b_{in_1}b_{n_1n_2}\cdots b_{n_{\lambda - 1}j}$ in the summation can be nonzero only if the $(\lambda + 1)$-tuple $(n_0, n_1, \ldots, n_\lambda)$ is a \emph{walk} from $n_0 = i$ to $n_\lambda = j$, i.e.\ each successive term of the tuple must differ from the last by at most $1$.  For such a walk we now show by induction that $b_{n_0n_1}b_{n_1n_2}\cdots b_{n_{m - 1}n_m} \in ks^{m + n_m - n_0}t^{m - n_m + n_0}$.  For the base case $m = 1$ we have the three cases above for $b_{n_0n_1}$ and one easily checks that the formula gives $kt^2$, $kst$, or $ks^2$ as needed.

Now assume the statement holds for $m - 1$ so that
\[b_{n_0n_1}\cdots b_{n_{m - 2}n_{m - 1}}b_{n_{m - 1}n_m} \in ks^{m - 1 + n_{m - 1} - n_0}t^{m - 1 - n_{m - 1} + n_0}b_{n_{m - 1}n_m}.\]
There are three cases.  First if $n_m = n_{m - 1} + 1$ then $b_{n_{m - 1}n_m} \in ks^2$ and the set becomes
\[ks^{m - 1 + n_{m - 1} - n_0}t^{m - 1 - n_{m - 1} + n_0}\cdot s^2 = ks^{m - n_m + n_0}t^{m + n_m - n_0}\]
as desired.  Next if $n_m = n_{m - 1}$ then $b_{n_{m - 1}n_m} \in kst$ and the set becomes
\[ks^{m - 1 + n_{m - 1} - n_0}t^{m - 1 - n_{m - 1} + n_0}\cdot st = ks^{m + n_m - n_0}t^{m - n_m + n_0}\]
as desired.  Finally if $n_m = n_{m - 1} - 1$ then $b_{n_{m - 1}n_m} \in kt^2$ and the set becomes
\[ks^{m - 1 + n_{m - 1} - n_0}t^{m - 1 - n_{m - 1} + n_0}\cdot t^2 = ks^{m + n_m - n_0}t^{m - n_m + n_0}\]
as desired.  Thus the induction is complete and for $m = \lambda$ this gives
\[b_{n_0n_1}b_{n_1n_2}\cdots b_{n_{\lambda - 1}n_\lambda} \in ks^{\lambda + n_\lambda - n_0}t^{\lambda - n_\lambda + n_0} = ks^{\lambda + j - i}t^{\lambda - j + i}\]
and completes the proof.
\end{proof}

Moving on, the third map we wish to consider is $B'(\lambda) \in \mathbb M_{rp}(k[s, t])$ defined to be the $rp^\text{th}$ trailing principal minor of $B(\lambda)$, i.e., the minor of $B(\lambda)$ consisting of rows and columns $a + 1, a + 2, \ldots, \lambda$.
\begin{Prop} \label{propBpker}
The kernel of $B'(\lambda)$ is a free $k[s, t]$-module (ungraded) of rank $r$ whose basis elements are homogeneous of degree $p - a - 2$.
\end{Prop}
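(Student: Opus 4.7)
The plan is to reduce Proposition \ref{propBpker} to Proposition \ref{propMker} by identifying $B'(\lambda)$, up to a sign and the graded automorphism of $k[s,t]$ swapping $s$ and $t$, with the $\varepsilon = 0$ specialization of $M_\varepsilon(\lambda)$.

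First, setting $\varepsilon = 0$ in the defining formula for $M_\varepsilon(\lambda)$ kills every extra entry $-(a+1)\binom{r}{q}\varepsilon^{qp}s^2$ in the top row, leaving the tridiagonal matrix $M_0(\lambda)$ whose $(i,j)$-entry for $i,j \in \{a+1,\ldots,\lambda\}$ equals $is^2$ when $i = j+1$, $(2i-a)st$ when $i = j$, and $(i-a)t^2$ when $i = j-1$. Meanwhile $B'(\lambda)$ is tridiagonal with subdiagonal $-it^2$, diagonal $(\lambda-2i)st$, and superdiagonal $(\lambda-i)s^2$. Writing $\lambda = rp + a$, one has $\lambda - 2i = -(2i-a)$ and $\lambda - i = -(i-a)$ in $k$, and letting $\sigma\colon k[s,t] \to k[s,t]$ denote the $k$-algebra automorphism swapping $s$ and $t$, a direct comparison of entries yields
\[ B'(\lambda) = -\sigma(M_0(\lambda)). \]

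Next, I would check that Proposition \ref{propMker} applies when $\varepsilon = 0$. Its proof uses $\varepsilon$ only in the concluding step, where the top-row condition forces $f_\lambda$ to be killed by the polynomial $\sum_{q=0}^{r}(-1)^{qp}\binom{r}{q}\varepsilon^{qp}x^{(r-q)p}$. At $\varepsilon = 0$ this polynomial reduces to $x^{rp}$, which is still nonzero in the integral domain $k[s, s^{-1}, t]$, so $f_\lambda = 0$ as before. Hence $\ker M_0(\lambda)$ is a free $k[s,t]$-module of rank $r$ with basis homogeneous of degree $p-a-2$.

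Finally, since $-1$ is a unit in $k$ and $\sigma$ is a degree-preserving automorphism of the graded ring $k[s,t]$, applying $\sigma$ component-wise to a basis of $\ker M_0(\lambda)$ yields a basis of $\ker B'(\lambda)$ of the same rank $r$ and the same homogeneous degree $p-a-2$. The only place that demands any care is the bookkeeping identity $B'(\lambda) = -\sigma(M_0(\lambda))$ — in particular the mod-$p$ identifications of the scalars $\lambda-2i$ and $\lambda-i$ — but no genuine obstacle arises.
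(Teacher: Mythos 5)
Your proof is correct, and it takes a genuinely different route from the paper's. The paper proves \cref{propBpker} by rerunning the induction from \cref{propBker} (since $B'(\lambda)$ is a trailing minor of $B(\lambda)$, all but one of the row conditions are identical) and then imposing the single new condition coming from the first row of $\frac{1}{s^2}B'(\lambda)$, which forces $g = 0$ and leaves the basis $H_0,\dots,H_{r-1}$. You instead observe the identity $B'(\lambda) = -\sigma(M_0(\lambda))$, where $\sigma$ swaps $s$ and $t$, and transport the conclusion of \cref{propMker}; I verified that the three nonzero bands match after using $\lambda \equiv a \pmod p$, that the $\varepsilon = 0$ specialization of \cref{propMker} is legitimate (as you note, the only role of $\varepsilon$ is in the final polynomial, whose leading term $x^{rp}$ survives at $\varepsilon = 0$), and that applying $\sigma$ entrywise is a degree-preserving $\sigma$-semilinear isomorphism carrying $\ker M_0(\lambda)$ onto $\ker B'(\lambda)$. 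Your route is shorter and exposes the symmetry between the two ``boundary'' modules $\Phi_{[1:0]}(\lambda)$ and $\Phi_{[0:1]}(\lambda)$ under the involution $[s:t] \mapsto [t:s]$ of $\mathbb P^1$, whereas the paper's route is self-contained and piggybacks on work already done for $B(\lambda)$ without needing to notice that symmetry.
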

\begin{proof}
The induction from the proof of \cref{propBker} applies giving
\[f_{\lambda - i} = (-1)^{\lambda - i}x^{\lambda - i}g + (-1)^b\binom{p + a - b}{p - b - 1}x^{p - b - 1}h_q\]
for $0 \leq i < rp$.  All that is left is the condition
\[-(a + 2)xf_{a + 1} - f_{a + 2} = 0\]
from the first row of $\frac{1}{s^2}B'(\lambda)$.  Substituting in the formulas we get
\[(-1)^{a + 1}(a + 1)x^{a + 2}g = 0\]
which forces $g = 0$.  Thus as a basis for the kernel we get $H_0, \ldots, H_{r - 1}$.
\end{proof}

Before we move on to the final map, let us first prove the following lemma which was needed in \Cref{secSl2}

\begin{Lem} \label{lemBJType}
Let $s, t \in k$ so that $B'(\lambda) \in \mathbb M_{rp}(k)$.
\[\jtype(B'(\lambda)) = \begin{cases} [1]^{rp} & \text{if} \ s = t = 0, \\ [p]^{r - 1}[p - a - 1][a + 1] & \text{if} \ s = 0, t \neq 0, \\ [p]^r & \text{if} \ s \neq 0. \end{cases}\]
\end{Lem}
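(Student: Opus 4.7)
The plan is to split into three cases based on the values of $s, t \in k$. The case $s = t = 0$ is trivial: $B'(\lambda)$ is the zero matrix, so $\jtype(B'(\lambda)) = [1]^{rp}$. For the case $s = 0$ and $t \neq 0$, only the subdiagonal entries $-it^2$ for $a+1 \leq i \leq \lambda$ survive, making $B'(\lambda)$ strictly lower bidiagonal. These entries vanish precisely when $i \equiv 0 \pmod{p}$, namely at $i = p, 2p, \ldots, rp$. The chain $e_{a+1}, e_{a+2}, \ldots, e_\lambda$ therefore breaks into Jordan blocks of sizes $p - a - 1$ (from $e_{a+1}$ to $e_{p-1}$), $p$ for each $q = 1, \ldots, r-1$ (from $e_{qp}$ to $e_{(q+1)p-1}$), and $a + 1$ (from $e_{rp}$ to $e_\lambda$), yielding $[p]^{r-1}[p-a-1][a+1]$.

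For the main case $s \neq 0$, I will establish (i) $B'(\lambda)^p = 0$ and (ii) $\dim_k \ker B'(\lambda) = r$. Together these force $\jtype(B'(\lambda)) = [p]^r$, since any $p$-restricted partition of $rp$ with exactly $r$ parts must consist of $r$ copies of $p$. For (i), note that $B(\lambda)$ is by construction the matrix of the action of $E = s^2 e - t^2 f + s t h \in \slt$ on $V(\lambda)$, and $E^2 = (s^2 \cdot (-t^2) + (st)^2) I_{2 \times 2} = 0$, so $E \in \mathcal N_p(\slt)$ and $B(\lambda)^p = 0$. The only entry of $B(\lambda)$ linking $\mathrm{span}_k(v_{a+1}, \ldots, v_\lambda)$ to its complement is $B(\lambda)_{a, a+1} = (\lambda - a)s^2 = r p s^2 = 0$ in characteristic $p$, so this span is $E$-invariant, $B'(\lambda)$ acts as the restriction of $B(\lambda)$, and therefore $B'(\lambda)^p = 0$ as well.

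For (ii), I rerun the inductive parameterization from the proof of \cref{propBpker} directly over $k$, setting $x = t/s \in k$ (well-defined since $s \neq 0$). That argument expresses every element of $\ker B'(\lambda)$ in terms of $r + 1$ scalars $g, h_0, \ldots, h_{r-1}$, and the first row of $B'(\lambda)$ then contributes the single constraint $(a+1) x^{a+2} g = 0$. When $x \neq 0$, this forces $g = 0$ (since $a + 1 \neq 0$ in $k$, under the standing assumption $a \leq p - 2$), leaving $r$ effective parameters. The main subtlety is the case $x = 0$: here the constraint degenerates to $0 = 0$, and one must observe separately that the formula multiplies $g$ only by factors $x^{\lambda - i}$ with $\lambda - i \geq 1$ throughout the range $0 \leq i < rp$, so $g$ contributes to no coordinate and is a vacuous parameter; the kernel is again $r$-dimensional, spanned by $h_0, \ldots, h_{r-1}$. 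In either subcase the conclusion $\dim \ker B'(\lambda) = r$ holds, completing the argument.
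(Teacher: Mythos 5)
Your proof is correct, and for the main case $s \neq 0$ you take a genuinely different route from the paper. The first two cases ($s=t=0$ and $s=0, t \neq 0$) are identical to the paper's argument. For $s \neq 0$, the paper observes that $B'(\lambda)$ has exactly $r(p-1)$ nonzero entries on the super-diagonal and nothing above it, deduces $\rank B'(\lambda) \geq r(p-1)$, and then notes that this is the maximal possible rank for a $p$-nilpotent $rp \times rp$ matrix, which forces Jordan type $[p]^r$. Your argument is the dual one: you establish $p$-nilpotency directly via the representation-theoretic identification of $B'(\lambda)$ as the restriction of $E = s^2e - t^2f + sth \in \mathcal N_p(\slt)$ to the invariant subspace $\spn_k(v_{a+1},\ldots,v_\lambda)$, and then compute $\dim_k \ker B'(\lambda) = r$ by specializing the parameterization from \cref{propBpker} to $k$. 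Since $\rank = rp - \dim\ker$, your nullity computation and the paper's rank bound encode the same information; however, your approach is more self-contained in two respects. First, it makes the needed $p$-nilpotency explicit (the paper's phrase ``the maximal rank that a nilpotent matrix can achieve'' silently relies on $p$-nilpotency, since a merely nilpotent matrix can have rank $rp-1$). Second, it spells out the sub-case $t = 0$, i.e.\ $x = 0$, where the final constraint $(a+1)x^{a+2}g = 0$ degenerates; your observation that $g$ only ever appears multiplied by positive powers of $x$ in the relevant coordinates is exactly the right fix, and it is a point worth making since \cref{propBpker} used algebraic independence of $x$ to kill $g$. The price you pay is that your argument is longer, whereas the paper's rank-count is a quick observation once one accepts the unstated deduction from super-diagonal structure to a rank lower bound.
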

\begin{proof}
If $(s, t) = (0, 0)$ then $B'(\lambda)$ is the zero matrix, hence the Jordan type is $[1]^{rp}$.  If $s = 0$ and $t \neq 0$ then $B'(\lambda)$ only has non-zero entries on the sub-diagonal.  Normalizing these entries to $1$ gives the Jordan form of the matrix from which we read the Jordan type.  If we use the row numbering from $B(\lambda)$ (i.e. the first row is $a + 1$, the second $a + 2$, etc.) then the zeros on the sub-diagonal occur at rows $p, 2p, \ldots, rp$.  Thus the first block is size $p - a - 1$, followed by $r - 1$ blocks of size $p$, and the last block is size $a + 1$.  Hence the Jordan type is $[p]^{r - 1}[p - a - 1][a + 1]$.

Now assume $s \neq 0$.  There are exactly $r(p - 1)$ non-zero entries on the super-diagonal and no non-zero entries above the super-diagonal therefore $\rank B'(\lambda) \geq r(p - 1)$.  But this is the maximal rank that a nilpotent matrix can achieve and such a matrix has Jordan type consisting only of blocks of size $p$.  Hence the Jordan type is $[p]^r$.
\end{proof}

The final map we wish to consider is given by the matrix $C(\lambda) \in \mathbb M_{\lambda + 1}(k[s, t])$ shown in \Cref{figMatB}.
\begin{sidewaysfigure}[p]
\vspace{350pt}
\[\begin{bmatrix} \lambda st & s^2 \\
-\lambda t^2 & (\lambda - 2)st & 2s^2 \\
& -(\lambda - 1)t^2 & (\lambda - 4)st & 3s^2 \\
&& -(\lambda - 2)t^2 & \ddots & \ddots \\
&&& \ddots & \ddots & -s^2 \\
&&&& -(\lambda + 2)t^2 & (\lambda + 2)st & 0 \\
&&&&& -(\lambda + 1)t^2 & \lambda st & s^2 \\
&&&&&& -\lambda t^2 & \ddots & \ddots \\
&&&&&&& \ddots & \ddots & (\lambda - 2)s^2 \\
&&&&&&&& -3t^2 & -(\lambda - 4)st & (\lambda - 1)s^2 \\
&&&&&&&&& -2t^2 & -(\lambda - 2)st & \lambda s^2 \\
&&&&&&&&&& -t^2 & -\lambda st
\end{bmatrix}\]
\caption{$C(\lambda)$} \label{figMatC}
\end{sidewaysfigure}
Index the rows and columns of this matrix using the integers $0, 1, \ldots, \lambda$.  Then the entries of $C(\lambda)$ are
\[C(\lambda)_{ij} = \begin{cases}
(i - \lambda - 1)t^2 & \text{if} \ \ i = j + 1 \\
(\lambda - 2i)st & \text{if} \ \ i = j \\
(i + 1)s^2 & \text{if} \ \ i = j - 1 \\
0 & \text{otherwise.}
\end{cases}\]
\begin{Prop} \label{propCker}
The kernel of $C(\lambda)$ is a free $k[s, t]$-module (ungraded) of rank $r + 1$ whose basis elements are homogeneous of degree $a$.
\end{Prop}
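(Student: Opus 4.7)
The plan is to follow the template of the proofs of \cref{propMker,propBker,propBpker}: first localize at $s$, compute the kernel of the resulting matrix as a map of $k[s, s^{-1}, t]$-modules, then clear denominators and verify that the lift gives a basis for the $k[s,t]$-kernel.

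Working over $k[s, s^{-1}][x]$ with $x = t/s$, the matrix $\frac{1}{s^2}C(\lambda)$ is tridiagonal with entries $(i - \lambda - 1)x^2$ on the sub-diagonal, $(\lambda - 2i)x$ on the diagonal, and $i + 1$ on the super-diagonal. I will induct downward from row $0$: row $i$ determines $f_{i+1}$ from $f_{i-1}, f_i$ whenever the super-diagonal coefficient $i + 1$ is nonzero in $k$. At the rows $i = qp - 1$ for $1 \leq q \leq r$ this coefficient vanishes, so each such row imposes a constraint and $f_{qp}$ becomes a new free parameter. Together with $f_0$ this gives $r + 1$ free parameters, matching the expected rank.

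The key step is to show that, with only $f_{qp}$ nonzero among the free parameters, the $q$-th block takes the shape $f_{qp + j} = (-1)^j\binom{a}{j} x^j f_{qp}$ for $0 \leq j \leq a$ and $f_{qp + j} = 0$ for $a < j < p$. This is a direct induction whose inductive step reduces to the identity $(a + 2 - j)\binom{a}{j-2} + (a + 2 - 2j)\binom{a}{j-1} = j\binom{a}{j}$. Denote the resulting basis vector of the localized kernel (with $f_{qp} = 1$) by $K_q$. Clearing denominators, $s^a K_q$ has entries $(-1)^j\binom{a}{j} s^{a-j} t^j$ at coordinate $qp + j$ and zero elsewhere, so is homogeneous of degree $a$. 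These $r + 1$ vectors are linearly independent because their supports are disjoint. Spanning follows the standard pattern: if $f$ lies in the $k[s,t]$-kernel, write $f = \sum_q c_q K_q$ with $c_q \in k[s, s^{-1}, t]$; the $qp$-th coordinate of $f$ yields $c_q = f_{qp} \in k[s,t]$, while the $(qp + a)$-th coordinate yields $c_q x^a \in k[s,t]$, forcing $s^a \mid c_q$. Hence $\{s^a K_q\}_{q=0}^r$ is the desired basis.

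The main obstacle will be the bookkeeping at the rows $i = qp - 1$: one must verify that the constraint imposed by each such row is automatically satisfied. This splits into cases depending on whether $a + 2 \equiv 0 \pmod p$: when $a + 2 \neq 0$ in $k$ the constraint reduces to a Lucas-type identity between the binomials $\binom{\lambda}{qp-1}$ and $\binom{\lambda}{qp-2}$ (both of which vanish when $a < p - 2$), while if $a + 2 = 0$ the constraint itself is trivially satisfied. A similar direct check handles the terminal row $\lambda$.
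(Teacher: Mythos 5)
Your proposal takes essentially the same route as the paper's proof: localize at $s$, show by row-induction that the kernel of $\frac{1}{s^2}C(\lambda)$ has basis vectors supported on blocks with entries $(-1)^j\binom{a}{j}x^j$ (the paper writes $\binom{\lambda}{j}$, which equals $\binom{a}{j}$ in $k$ by Lucas), verify the rows $qp-1$ and $\lambda$ impose no constraint, then clear denominators to get degree-$a$ vectors and check that they span the $k[s,t]$-kernel via the disjoint-support/coordinate argument. The only differences are notational, e.g.\ your inductive-step identity $(a+2-j)\binom{a}{j-2} + (a+2-2j)\binom{a}{j-1} = j\binom{a}{j}$ is the mod-$p$ form of the integer manipulation the paper does via $(\lambda-b+2)\binom{\lambda}{b-2} = (b-1)\binom{\lambda}{b-1}$.
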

\begin{proof}
Let
\[f = \begin{bmatrix} f_0 \\ f_1 \\ \vdots \\ f_\lambda \end{bmatrix}\]
be an arbitrary element of the kernel of $\frac{1}{s^2}C(\lambda)$ shown in \Cref{figMatCx}
\begin{sidewaysfigure}[p]
\vspace{350pt}
\[\begin{bmatrix} \lambda x & 1 \\
-\lambda x^2 & (\lambda - 2)x & 2 \\
& -(\lambda - 1)x^2 & (\lambda - 4)x & 3 \\
&& -(\lambda - 2)x^2 & \ddots & \ddots \\
&&& \ddots & \ddots & -1 \\
&&&& -(\lambda + 2)x^2 & (\lambda + 2)x & 0 \\
&&&&& -(\lambda + 1)x^2 & \lambda x & 1 \\
&&&&&& -\lambda x^2 & \ddots & \ddots \\
&&&&&&& \ddots & \ddots & \lambda - 2 \\
&&&&&&&& -3x^2 & -(\lambda - 4)x & \lambda - 1 \\
&&&&&&&&& -2x^2 & -(\lambda - 2)x & \lambda \\
&&&&&&&&&& -x^2 & -\lambda x
\end{bmatrix}\]
\caption{$\frac{1}{s^2}C(\lambda)$} \label{figMatCx}
\end{sidewaysfigure}
whose entries are given by
\[\frac{1}{s^2}C(\lambda)_{ij} = \begin{cases}
(i - \lambda - 1)x^2 & \text{if} \ \ i = j + 1 \\
(\lambda - 2i)x & \text{if} \ \ i = j \\
i + 1 & \text{if} \ \ i = j - 1 \\
0 & \text{otherwise.}
\end{cases}\]
with $x = \frac{t}{s}$.  We show by induction that if $i = qp + b$ and $0 \leq b < p$ then
\[f_i = (-1)^b\binom{\lambda}{b}x^bh_q.\]
For the base case the formula gives $f_0 = h_0$ so we take this as the definition of $h_0$.  The condition imposed by row $1$ is $-\lambda xf_0 + f_1 = 0$ which gives $f_1 = -xh_0$ as desired.

For the inductive step assume the formula holds for indices less then $i$ and the condition imposed by all rows of index less than $i - 1$ is satisfied.  Row $i - 1$ has nonzero entries $(i - \lambda - 2)x^2$, $(\lambda - 2i + 2)x$, and $i$ in columns $i - 2$, $i - 1$, and $i$ respectively so the condition is
\[(i - \lambda - 2)x^2f_{i - 2} + (\lambda - 2i + 2)xf_{i - 1} + if_i = 0\]
First assume $i \neq 0, 1$ in $k$.  Then we have
\begin{align*}
f_i &= \frac{-1}{i}\left((-1)^{b - 2}(i - \lambda - 2)\binom{\lambda}{b - 2}x^bh_q + (-1)^{b - 1}(\lambda - 2i + 2)\binom{\lambda}{b - 1}x^bh_q\right) \\
&= \frac{(-1)^b}{b}\left((\lambda - b + 2)\binom{\lambda}{b - 2} + (\lambda - 2b + 2)\binom{\lambda}{b - 1}\right)x^bh_q \\
&= \frac{(-1)^b}{b}((b - 1) + (\lambda - 2b + 2))\binom{\lambda}{b - 1}x^bh_q \\
&= (-1)^b\frac{\lambda - b + 1}{b}\binom{\lambda}{b - 1}x^bh_q \\
&= (-1)^b\binom{\lambda}{b}x^bh_q
\end{align*}
as desired.  Next assume $i = 0$ in $k$.  Then
\begin{align*}
& (i - \lambda - 2)x^2f_{i - 2} + (\lambda - 2i + 2)xf_{i - 1} \\
&\qquad = (\lambda + 2)\binom{\lambda}{p - 2}x^ph_{q - 1} + (\lambda + 2)\binom{\lambda}{p - 1}x^ph_{q - 1}.
\end{align*}
But $a + 1\neq 0$ so $\binom{\lambda}{p - 1} = 0$ and if $a + 1 \neq 0$ then $\binom{\lambda}{p - 2} = 0$ otherwise $\lambda + 2 = 0$.  In any case the above expression is $0$ so the condition imposed by row $i - 1$ is automatically satisfied.  The formula gives $f_i = h_q$ so we take this as the definition of $h_q$.  Finally assume $i = 1$ in $k$.  Then we have
\begin{align*}
f_i &= (\lambda + 1)\binom{\lambda}{p - 1}x^{p + 1}h_{q - 1} - \lambda xh_q \\
&= -\binom{\lambda}{1}xh_q
\end{align*}
as desired.  This completes the induction.  We know that the given formulas for $f_i$ satisfy the conditions imposed by all rows save the last, whose condition is
\[-x^2f_{\lambda - 1} - \lambda xf_\lambda = 0.\]
We have
\[\lambda xf_\lambda = (-1)^a\lambda\binom{\lambda}{a}x^{a + 1}h_r = (-1)^a\lambda x^{a + 1}h_r.\]
If $a = 0$ then
\[x^2f_{\lambda - 1} = (-1)^{p - 1}\binom{\lambda}{p - 1}x^{p + 1}h_{r - 1} = 0\]
and $\lambda = 0$ so this conditions is satisfied.  If $a \neq 0$ then
\[x^2f_{\lambda - 1} = (-1)^{a - 1}\binom{\lambda}{a - 1}x^{a + 1}h_r = (-1)^{a - 1}ax^{a - 1}h_r\]
so
\begin{align*}
& x^2f_{\lambda - 1} + \lambda xf_\lambda \\
& \qquad = (-1)^{a - 1}ax^{a - 1}h_r + (-1)^a\lambda x^{a + 1}h_r \\
& \qquad = (-1)^a(\lambda - a)x^{a + 1}h_r \\
& \qquad = 0
\end{align*}
and the condition is again satisfied so we have found a basis.  If $H_q$ is the basis vector associated to $h_q$ then the smallest and largest powers of $x$ in $H_q$ are $0$ in coefficient $qp$ and $a$ in coefficient $qp + a$.  By the usual arguments the $H_q$ lift to a basis for the kernel of $C(\lambda)$ that is homogeneous of degree $a$.
\end{proof}

The final map we want to consider is parametrized by $0 \leq a < p - 1$.  Given such an $a$, let $D(a) \in \mathbb M_{2p}(k[s, t])$ be the block matrix
\[D(a) = \begin{bmatrix} B(2p - a - 2) & D'(a) \\ 0 & B(a)^\dagger \end{bmatrix}\]
where $D'(a)$ and $B(a)^\dagger$ are as follows.  The matrix $D'(a)$ is a $(2p - a - 1) \times (a + 1)$ matrix whose $(i, j)^\text{th}$ entry is
\[D'(a)_{ij} = \begin{cases} \frac{1}{i + 1}s^2 & \text{if} \ i - j = p - a - 2 \\ \frac{1}{a + 1}t^2 & \text{if} \ (i, j) = (p, a) \\ 0 & \text{otherwise.} \end{cases}\]
The matrix $B(a)^\dagger$ is produced from $B(a)$ by taking the transpose and then swapping the variables $s$ and $t$.
\[B(a)^\dagger = \begin{bmatrix} ast & -s^2 \\ at^2 & (a - 2)st & -2s^2 \\ & (a - 1)t^2 & \ddots & \ddots \\ && \ddots & -(a - 2)st & -as^2 \\ &&& t^2 & -ast \end{bmatrix}\]

\begin{Prop} \label{propQker}
The inclusion of $k[s, t]^{2p - a - 1}$ into $k[s, t]^{2p}$ as the top $2p - a - 1$ coordinates of a column vector induces an isomorphism $\ker B(2p - a - 2) \simeq \ker D(a)$.
\end{Prop}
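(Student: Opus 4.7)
The plan is to exploit the block-upper-triangular form of $D(a)$ and reduce the surjectivity of the induced map to a single row-level obstruction. The inclusion $u \mapsto (u, 0)^T$ restricts to an injection $\iota\colon\ker B(2p-a-2) \hookrightarrow \ker D(a)$; what requires proof is surjectivity. An arbitrary element $(u, v)^T \in \ker D(a)$ yields two equations
\[
B(2p-a-2)\, u + D'(a)\, v = 0, \qquad B(a)^\dagger\, v = 0,
\]
so I must show that any $v \in \ker B(a)^\dagger$ that admits such a $u$ must be zero.

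First I will compute $\ker B(a)^\dagger$. The vector $v_0 = (s^a,\ \binom{a}{1} s^{a-1}t,\ \ldots,\ t^a)^T$ is annihilated by $B(a)^\dagger$; this follows from the Pascal-type identity $(a-i+1)\binom{a}{i-1} + (a-2i)\binom{a}{i} - (i+1)\binom{a}{i+1} = 0$, which drops out of $(a-i+1)\binom{a}{i-1} = i\binom{a}{i}$ and $(i+1)\binom{a}{i+1} = (a-i)\binom{a}{i}$. Running an inductive row-reduction in the spirit of \cref{propBker} on $B(a)^\dagger$ (with the roles of $s$ and $t$ swapped) will confirm that $\ker B(a)^\dagger$ is free of rank one, generated by $v_0$. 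Consequently, any $v \in \ker B(a)^\dagger$ takes the form $v = c\, v_0$ for some $c \in k[s,t]$, and the problem reduces to showing $c = 0$.

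The key step is to apply the propagation of \cref{propBker} to the inhomogeneous system $B(2p-a-2)\, u = -c \cdot D'(a)\, v_0$ over $k[s, \frac{1}{s}, t]$ with $x = t/s$. Directly from the definitions, $D'(a)\, v_0$ has nonzero entries only in rows $p-a-2, \ldots, p-2$ and row $p$; in particular its $(p-a-2)$-th entry is $s^{a+2}/(p-a-1)$. Rows $0$ through $p-a-3$ carry no inhomogeneous contribution and have nonzero leading coefficients $(\lambda-i)s^2$ (since $\lambda = 2p-a-2 \equiv p-a-2 \not\equiv i \pmod p$ in that range), so they inductively force $u_i = (-1)^i x^i g$ for $0 \leq i \leq p-a-2$, where $g := u_0$. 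The critical row is $p-a-2$: because $\lambda - (p-a-2) = p \equiv 0 \pmod p$, its leading coefficient vanishes, and the row becomes an identity constraint. Substituting the homogeneous formulas and invoking $(p-a-2) + (a+2) = p \equiv 0 \pmod p$, the left-hand side of this constraint vanishes identically in $g$, so the equation collapses to
\[
0 = -c \cdot \frac{s^{a+2}}{p-a-1}.
\]
Since $p-a-1 \neq 0$ in $k$ for $a < p-1$, this forces $c = 0$, completing the argument.

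The main obstacle will be verifying the cancellation at row $p-a-2$ carefully---it depends essentially on the characteristic $p$ identity $(p-a-2) + (a+2) \equiv 0$ together with the explicit form $u_i = (-1)^i x^i g$ produced by the row-reduction. The boundary case $a = p-2$ is handled identically but trivially: then $p-a-2 = 0$, no prior propagation is needed, and row $0$ of $B(p)$ is already the zero row, so the constraint immediately reduces to $0 = -c\, s^p$.
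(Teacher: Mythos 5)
Your proposal is correct, but it takes a route that is genuinely different from the paper's. The paper inverts $t$ (so $x = s/t$), propagates the homogeneous formula of \cref{propBker} down rows $p+1,\ldots,2p-a-2$ of the top block, and then uses row $p$ --- the row where $D'(a)$ has its $\tfrac{1}{a+1}t^2$ entry --- to cancel the two $u$-terms and isolate $f_{2p-1}=0$; the rest of the bottom block is then killed by propagating up the rows of $B(a)^\dagger$ one at a time. You instead invert $s$ (so $x = t/s$), characterize $\ker B(a)^\dagger$ once and for all as multiples of the primitive vector $v_0 = \bigl(\binom{a}{i}s^{a-i}t^i\bigr)_i$, propagate down rows $0,\ldots,p-a-3$, and then use row $p-a-2$ --- where $D'(a)$ has its $\tfrac{1}{p-a-1}s^2$ entry --- as the single obstruction that forces $c=0$. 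So the two arguments exploit the two different nonzero strips of $D'(a)$ (the $(p,a)$-entry versus the leftmost $(p-a-2,0)$-entry), use opposite localizations, and handle the bottom block in dual ways (local upward propagation versus a global kernel computation). What your approach buys is that it collapses all of the bottom-block work into one clean statement about $\ker B(a)^\dagger$ and a single row of the top block; what it costs is the separate verification of the identity $(a-i+1)\binom{a}{i-1} + (a-2i)\binom{a}{i} - (i+1)\binom{a}{i+1}=0$ and the rank-one claim for $\ker B(a)^\dagger$ (for which, as you note, either a short row-reduction or the primitivity of $v_0$ together with the rank of $B(a)^\dagger$ over $k(s,t)$ suffices). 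Both arguments must treat $a = p-2$ a little separately; in your version it falls out trivially because row $p-a-2=0$ of $B(p)$ is identically zero. Everything you wrote checks out.
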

\begin{proof}
As $D(a)$ is block upper-triangular with $B(2p - a - 2)$ the top most block on the diagonal it suffices to show that every element of $\ker D(a)$ is of the form $\left[\begin{smallmatrix} v \\ 0 \end{smallmatrix}\right]$ with respect to this block decomposition.  That is, we must show that if
\[f = \begin{bmatrix} f_0 \\ f_1 \\ \vdots \\ f_{2p - 1} \end{bmatrix}\]
is an element of $\ker D(a)$ then $f_i = 0$ for all $2p - a - 1 \leq i \leq 2p - 1$.  Obviously it suffices to prove this for $\frac{1}{t^2}D(a)$ over $k[s, t, \frac{1}{t}]$ so let $x = \frac{s}{t}$.

We start by proving that $f_{2p - 1} = 0$.  There are two cases, first assume that $a + 2 = 0$ in $k$.  Then row $p$ of $\frac{1}{t^2}D(a)$ has only one nonzero entry, a $\frac{1}{a + 1}$ in column $2p - 1$.  Thus $f \in \ker \frac{1}{t^2}D(a)$ gives $\frac{1}{a + 1}f_{2p - 1} = 0$ in $k[s, t, ]$, hence $f_{2p - 1} = 0$.  Next assume that $a + 2 < p$.  Then the induction from \cref{propBker} applies to rows $p + 1, \ldots, 20 - a - 2$ and gives
\[f_i = (-1)^{a + i}x^{2p - a - 2 - i}f_{2p - a - 2}\]
for $p \leq i \leq 2p - a - 2$.  The condition imposed by row $p$ is
\[-(a + 2)xf_p - (a + 2)x^2f_{p + 1} + \frac{1}{a + 1}f_{2p - 1} = 0.\]
But note that the induction gave us $f_p = -xf_{p + 1}$ so this simplifies to $\frac{1}{a + 1}f_{2p - 1} = 0$ and again we have $f_{2p - 1} = 0$.

Now the condition imposed by the last row of $D(a)$ gives $f_{2p - 2} = axf_{2p - 1} = 0$.  By induction the $i^\text{th}$ row gives $-if_{i - 1} = (2i + a + 2)xf_i + (i + a + 2)x^2f_{i + 1} = 0$, hence $f_{i - 1} = 0$, for $p - a \leq i \leq 2p - 2$ and this completes the proof.
\end{proof}

\section{Explicit computation of $\gker{M}$ and $\F_i(V(\lambda))$} \label{secLieEx}

In this final section we carry out the explicit computations of the sheaves $\gker{M}$, for every indecomposable $\slt$-module $M$, and $\F_i(V(\lambda))$ for $i \neq p$.  Friedlander and Pevtsova \cite[Proposition 5.9]{friedpevConstructions} have calculated the sheaves $\gker{V(\lambda)}$ for Weyl modules $V(\lambda)$ such that $0 \leq \lambda \leq 2p - 2$.  Using the explicit descriptions of these modules found in \Cref{secSl2} we can do the calculation for the remaining indecomposable modules in the category.

\begin{Prop} \label{thmKer}
Let $\lambda = rp + a$ with $0 \leq a < p$ the remainder of $\lambda$ modulo $p$.  The kernel bundles associated to the indecomposable $\slt$-modules from \cref{thmPremet} are
\begin{align*}
\gker{\Phi_\xi(\lambda)} &\simeq \mathcal O_{\mathbb P^1}(a + 2 - p)^{\oplus r} \\
\gker{V(\lambda)} &\simeq \mathcal O_{\mathbb P^1}(-\lambda) \oplus \mathcal O_{\mathbb P^1}(a + 2 - p)^{\oplus r} \\
\gker{V(\lambda)^\ast} &\simeq \mathcal O_{\mathbb P^1}(-a)^{\oplus r + 1} \\
\gker{Q(a)} &\simeq \mathcal O_{\mathbb P^1}(-a) \oplus \mathcal O_{\mathbb P^1}(a + 2 - 2p)
\end{align*}
\end{Prop}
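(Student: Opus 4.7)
The plan is to identify, for each indecomposable module $M$ from \cref{thmPremet}, the matrix representing the pulled-back global operator $\iota^\ast\Theta_M$ as one of the four matrices $M_\varepsilon(\lambda)$, $B(\lambda)$, $C(\lambda)$, or $D(a)$ treated in \Cref{secMatThms} (where $B'(\lambda)$ is the relevant submatrix of $B(\lambda)$). Then the kernel is free as a graded $k[s,t]$-module, and a generator in degree $d$ sheafifies to a summand $\mathcal{O}_{\mathbb{P}^1}(-d)$ of $\gker{M}$, which will give the four displayed formulas.

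First I would write out $\iota^\ast\Theta_M = s^2 e - t^2 f + st\cdot h$ in the distinguished basis for each family; recall $\iota^\ast x = s^2$, $\iota^\ast y = -t^2$, $\iota^\ast z = st$ from \cref{exNslt}. The formulas for $V(\lambda)$ from \Cref{secSl2} produce exactly $B(\lambda)$, and those for $V(\lambda)^\ast$ produce $C(\lambda)$. Since $\Phi_{[0:1]}(\lambda)$ is the span of $v_{a+1}, \ldots, v_\lambda$ inside $V(\lambda)$, its matrix is the submatrix of $B(\lambda)$ on rows and columns $a+1, \ldots, \lambda$, namely $B'(\lambda)$. For $\Phi_{[1:\varepsilon]}(\lambda)$, the $w$-basis action produces $M_\varepsilon(\lambda)$ after applying Lucas' theorem to reduce $\binom{\lambda}{qp+a}$ to $\binom{r}{q}$ modulo $p$. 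For $Q(a)$, the combined $v,w$ basis produces the block matrix $D(a)$: the $v$-block is $B(2p-a-2)$ (using $2p - a - 2 \equiv -a - 2 \pmod{p}$ to match the minus signs in the $Q(a)$ formulas), the $w$-block is $B(a)^\dagger$, and the couplings $ew_i \ni \tfrac{1}{i}v_{i-1}$ and $fw_{p-1} \ni -\tfrac{1}{a+1}v_p$ give $D'(a)$.

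Once the matrix identifications are made, I would invoke \cref{propMker,propBker,propBpker,propCker,propQker} for the kernels. For $\Phi_\xi(\lambda)$ this gives a rank-$r$ free module with generators in degree $p-a-2$, yielding $\mathcal{O}_{\mathbb{P}^1}(a+2-p)^{\oplus r}$. For $V(\lambda)$ it gives one generator in degree $\lambda$ and $r$ in degree $p-a-2$, yielding $\mathcal{O}_{\mathbb{P}^1}(-\lambda) \oplus \mathcal{O}_{\mathbb{P}^1}(a+2-p)^{\oplus r}$. For $V(\lambda)^\ast$ it gives $r+1$ generators in degree $a$, yielding $\mathcal{O}_{\mathbb{P}^1}(-a)^{\oplus r+1}$. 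For $Q(a)$, \cref{propQker} reduces to $\ker B(2p-a-2)$; here the base-$p$ expansion of $2p-a-2$ is $r' = 1$ and $a' = p-a-2$, so \cref{propBker} gives one generator in degree $2p-a-2$ and one in degree $p-a'-2 = a$, yielding $\mathcal{O}_{\mathbb{P}^1}(a+2-2p) \oplus \mathcal{O}_{\mathbb{P}^1}(-a)$.

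The main obstacle is the matrix identification, particularly in the two more elaborate cases. For $\Phi_{[1:\varepsilon]}(\lambda)$ one must carefully track the wrap-around terms of the form $-(a+1)\binom{r}{q}\varepsilon^{qp}w_{a+1}$ produced by $e$ acting on $w_{qp+a}$, and for $Q(a)$ one must verify the block upper-triangular structure and the sign identifications between the $Q(a)$ formulas and those of $V(2p-a-2)$. The remaining steps amount to the standard dictionary between free graded $k[s,t]$-modules and their sheafifications on $\mathbb{P}^1$.
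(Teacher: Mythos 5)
Your proposal is correct and follows essentially the same approach as the paper: identify the pulled-back global operator for each indecomposable with one of the four matrices $M_\varepsilon(\lambda)$, $B(\lambda)$, $B'(\lambda)$, $C(\lambda)$, $D(a)$, invoke \cref{propMker,propBker,propBpker,propCker,propQker} for the kernels, and translate degree-$m$ generators into $\mathcal O_{\mathbb{P}^1}(-m)$ summands. You have merely spelled out some of the detailed verifications (the Lucas reduction of $\binom{\lambda}{qp+a}$, the block structure of $D(a)$, the $r'=1,\ a'=p-a-2$ bookkeeping for $Q(a)$) that the paper leaves implicit.
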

\begin{proof}
Assume first that $\xi = [1 : \varepsilon]$.  Then using the basis from \Cref{secSl2} we get that the matrix defining $\Theta_{\Phi_\xi(\lambda)}$ has entries
\[(\Theta_{\Phi_\xi(\lambda)})_{ij} = \begin{cases}
ix & \text{if} \ \ i = j + 1 \\
(2i - a)z & \text{if} \ \ i = j \\
(a - i)y & \text{if} \ \ i = j - 1 \\
-(a + 1)\binom{r}{q}\varepsilon^{qp}x & \text{if} \ \ (i, j) = (0, qp + a) \\
0 & \text{otherwise.}
\end{cases}\]
Pulling back along the map $\iota\colon\mathbb P^1 \to \PG[\slt]$ from \cref{exNslt} corresponds with extending scalars through the homomorphism
\begin{align*}
\frac{k[x, y, z]}{(xy + z^2)^n} &\to k[s, t] \\
(x, y, z) &\mapsto (s^2, -t^2, st).
\end{align*}
Thus the matrix of $\Theta_{\Phi_\xi(\lambda)}$ becomes the matrix $M_\varepsilon(\lambda)$ from \cref{propMker} and we see that the kernel is free.  A basis element, homogeneous of degree $m$, spans a summand of the kernel isomorphic to $k[s, t][-m]$.  By definition the $\mathcal O_{\mathbb P^1}$-module corresponding to $k[s, t][-m]$ is $\mathcal O_{\mathbb P^1}(-m)$ so the description of the kernel translates directly to the description of the sheaf above.

The remaining cases are all identical.  The modules $V(\lambda)$, $\Phi_{[0 : 1]}(\lambda)$, $V(\lambda)^\ast$, and $Q(a)$ give the matrices $B(\lambda)$, $B'(\lambda)$, $C(\lambda)$, and $D(a)$ whose kernels are calculated in Propositions \autoref{propBker}, \autoref{propBpker}, \autoref{propCker}, and \autoref{propQker} respectively.
\end{proof}

Next we compute $\F_i(V(\lambda))$ for any $i \neq p$ and any indecomposable $V(\lambda)$.  The proof is by induction on $r$ in the expression $\lambda = rp + a$.  For the base case we start with $V(\lambda)$ a simple module, i.e., $r = 0$.  Note that for the base case we do indeed determine $\F_p(V(\lambda))$, it is during the inductive step that we lose $i = p$.

\begin{Prop} \label{thmFiSimp}
If $0 \leq \lambda < p$ then
\[\F_i(V(\lambda)) = \begin{cases}\gker{V(\lambda)} & \text{if} \ i = \lambda + 1 \\ 0 & \text{otherwise.}\end{cases}\]
\end{Prop}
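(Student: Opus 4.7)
The proposition splits naturally into two halves. For the vanishing half, I would invoke that (by the description in \Cref{secSl2} with $r = 0$, $a = \lambda$) $V(\lambda)$ has constant Jordan type $[\lambda + 1]$. \Cref{thmFi} then makes every $\F_i(V(\lambda))$ locally free on all of $\mathbb P^1$, with ranks determined by $\jtype(-,V(\lambda)) = [p]^{a_p}\cdots[1]^{a_1}$. A single block $[\lambda+1]$ forces $a_{\lambda+1} = 1$ and $a_i = 0$ otherwise, so $\F_i(V(\lambda)) = 0$ for $i \neq \lambda+1$ (a locally free sheaf of rank $0$ vanishes), and $\F_{\lambda+1}(V(\lambda))$ is a line bundle on $\mathbb P^1$ which remains to be identified.

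For the identification, I would unwind
\[\F_{\lambda+1}(V(\lambda)) = \frac{\gker{V(\lambda)} \cap \gim[\lambda]{V(\lambda)}}{\gker{V(\lambda)} \cap \gim[\lambda+1]{V(\lambda)}}.\]
The denominator vanishes: the fiber of $\Theta_{V(\lambda)}^{\lambda+1}$ at any point is the $(\lambda+1)$st power of an operator of Jordan type $[\lambda+1]$, hence zero, so the polynomial matrix representative $B(\lambda)^{\lambda+1}$ evaluates to zero at every point of $\mathbb A^2\setminus\{0\}$, forcing $B(\lambda)^{\lambda+1} = 0$ in $k[s,t]$ and therefore $\gim[\lambda+1]{V(\lambda)} = 0$. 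The numerator simplifies because $\Theta \circ \Theta^\lambda = \Theta^{\lambda+1} = 0$ already gives $\gim[\lambda]{V(\lambda)} \subseteq \gker{V(\lambda)}$, so the numerator equals $\gim[\lambda]{V(\lambda)}$ itself. Thus $\F_{\lambda+1}(V(\lambda)) = \gim[\lambda]{V(\lambda)}$ as subsheaves of $\widetilde{V(\lambda)}$.

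The final step, which I expect to be the main obstacle, is to upgrade the inclusion $\gim[\lambda]{V(\lambda)} \hookrightarrow \gker{V(\lambda)}$ to an equality. Both are locally free of rank $1$: the right-hand side by \cref{thmKer}, the left-hand side by the local $j$-rank theorem applied to the constant local $\lambda$-rank $1$ of $V(\lambda)$. The fiber of $\gim[\lambda]{V(\lambda)}$ at any $v\in\mathbb P^1$ coincides with the set-theoretic image $\im\Theta_v^\lambda \subseteq V(\lambda)$ (local freeness lets one commute image with fiber, since the fiber dimension of the image sheaf equals the pointwise rank of $\Theta_v^\lambda$), and similarly the fiber of $\gker{V(\lambda)}$ is $\ker\Theta_v$. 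Jordan type $[\lambda+1]$ forces $\im\Theta_v^\lambda$ and $\ker\Theta_v$ to be the same one-dimensional subspace of $V(\lambda)$, so the inclusion is an isomorphism on every fiber. Nakayama's lemma applied stalkwise to the coherent cokernel $\gker{V(\lambda)}/\gim[\lambda]{V(\lambda)}$ then forces it to vanish, completing the identification.
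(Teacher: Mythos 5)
Your proof is correct, and the first two steps coincide exactly with the paper's: you invoke constant Jordan type $[\lambda+1]$ together with \cref{thmFi} to kill $\F_i$ for $i \neq \lambda+1$, and you use $\Theta_{V(\lambda)}^{\lambda+1}=0$ (arguing from pointwise vanishing of the polynomial matrix over an algebraically closed field) to reduce $\F_{\lambda+1}(V(\lambda))$ to $\gim[\lambda]{V(\lambda)}$.

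Where you diverge is in upgrading the inclusion $\gim[\lambda]{V(\lambda)} \hookrightarrow \gker{V(\lambda)}$ to an equality. The paper computes: it shows via \cref{lemBlambda} that the $j^\text{th}$ column of $B(\lambda)^\lambda$ is a scalar times $s^jt^{\lambda-j}G$, where $G$ is the explicit degree-$\lambda$ generator of $\ker B(\lambda)$ from \cref{propBker}, so $\im B(\lambda)^\lambda$ contains $G$ times every monomial of degree $\lambda$, the quotient $\ker B(\lambda)/\im B(\lambda)^\lambda$ is finite dimensional over $k$, and Serre's theorem on coherent sheaves over a projective variety kills it. Your route is instead fiberwise: both sheaves are line bundles (one by \cref{thmKer}, the other by the constant-local-$j$-rank theorem), their fibers inside $M$ are $\im\Theta_v^\lambda$ and $\ker\Theta_v$ respectively (the tensor-exactness needed here is supplied by local freeness of $\gcoker[j]{V(\lambda)}$), Jordan type $[\lambda+1]$ forces these to coincide pointwise, and Nakayama applied to the coherent cokernel finishes. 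Both are valid; the paper's argument is concrete and leverages the matrix groundwork it has already laid in \Cref{secMatThms} (indeed, \cref{lemBlambda} exists in the paper only to support this step), while yours is cleaner conceptually, avoids \cref{lemBlambda} entirely, and generalizes more readily to situations where one lacks an explicit free basis for the kernel. Your parenthetical justification of the fiber-of-image computation is a bit terse, but the required $\mathrm{Tor}$-vanishing does follow from local freeness of the cokernel as you indicate, so the gap is one of exposition rather than substance.
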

\begin{proof}
First note that $V(\lambda)$ has constant Jordan type $[\lambda + 1]$ so \cref{thmFi} tells us that when $i \neq \lambda + 1$ the sheaf $\F_i(V(\lambda))$ is locally free of rank $0$, hence is the zero sheaf.

For $i = \lambda + 1$ recall from the previous proof that the map $\Theta_{V(\lambda)}$ of sheaves is given in the category of $k[s, t]$-modules by the matrix $B(\lambda)$ in \Cref{figMatB}.  The $(\lambda + 1)^\text{th}$ power of a matrix of Jordan type $[\lambda + 1]$ is zero so the entries of $B(\lambda)^{\lambda + 1}$ are polynomials representing the zero function.  We assume that $k$ is algebraically closed so this means $B(\lambda)^{\lambda + 1} = 0$ and therefore $\Theta_{V(\lambda)}^{\lambda + 1} = 0$.  In particular
\[\gim[\lambda]{V(\lambda)} \subseteq \gker{V(\lambda)}\]
so the definition of $\F_{\lambda + 1}(V(\lambda))$ gives
\begin{equation} \label{eqnFi}
\F_{\lambda + 1}(V(\lambda)) = \frac{\gker{V(\lambda)} \cap \gim[\lambda]{V(\lambda)}}{\gker{V(\lambda)} \cap \gim[\lambda + 1]{V(\lambda)}} = \gim[\lambda]{V(\lambda)}.
\end{equation}
We have a short exact sequence of $k[s, t]$-modules
\[0 \to \im B(\lambda)^\lambda \to \ker B(\lambda) \to \frac{\ker B(\lambda)}{\im B(\lambda)^\lambda} \to 0.\]
If we show that the quotient $\ker B(\lambda)/\im B(\lambda)^\lambda$ is finite dimensional then by Serre's theorem and \autoref{eqnFi} this gives a short exact sequence of sheaves
\[0 \to \F_i(V(\lambda)) \to \gker{V(\lambda)} \to 0 \to 0\]
and completes the proof.

To show that $\ker B(\lambda)/\im B(\lambda)^\lambda$ is a finite dimensional module note that from $B(\lambda)^{\lambda + 1} = 0$ we get that the columns of $B(\lambda)^\lambda$ are contained in the kernel of $B(\lambda)$ which, in \cref{propBker} we determined is a free $k[s, t]$-module with basis element
\[G = \begin{bmatrix} s^\lambda \\ -s^{\lambda - 1}t \\ \vdots \\ (-1)^\lambda t^\lambda \end{bmatrix}.\]
We also know by \cref{lemBlambda} that the first entry in the $j^\text{th}$ column of $B(\lambda)^\lambda$ is $c_js^{\lambda + j}t^{\lambda - j}$ for some $c_j \in k$, so the $j^\text{th}$ column must therefore be $c_js^jt^{\lambda - j}G$.  The columns of $B(\lambda)^\lambda$ range from $j = 0$ to $j = \lambda$ so this shows that $G$ times any monomial of degree $\lambda$ is contained in the image of $B(\lambda)^\lambda$.  Thus the quotient $\ker B(\lambda)/\im B(\lambda)^\lambda$ is spanned, as a vector space, by the set of vectors of the form $G$ times a monomial of degree strictly less than $\lambda$.  There are only finitely many such monomials therefore $\ker B(\lambda)/\im B(\lambda)^\lambda$ is finite dimensional and the proof is complete.
\end{proof}

Now for the inductive step we will make use of \cref{thmOm}, but in a slightly different form.  Note that the shift in \cref{thmOm} is given by tensoring with the sheaf $\mathcal O_{\PG[\slt]}(1)$ associated to the shifted module $\frac{k[x, y, z]}{xy + z^2}[1]$.  Likewise we consider $\mathcal O_{\mathbb P^1}(1)$ to be the sheaf associated to $k[s, t][1]$.  Pullback through the isomorphism $\iota\colon\mathbb P^1 \to \PG[\slt]$ of \cref{exNslt} yields $\iota^\ast\mathcal O_{\PG[\slt]}(1) = \mathcal O_{\mathbb P^1}(2)$.  Consequently, \cref{thmOm} has the following corollary.

\begin{Cor} \label{corFiOmega}
Let $M$ be an $\slt$-module and $1 \leq i < p$.  With twist coming from $\mathbb P^1$ we have
\[\F_i(M) \simeq \F_{p - i}(\Omega M)(2p - 2i).\]
\end{Cor}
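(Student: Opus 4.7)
The plan is to derive this directly from \cref{thmOm} by transporting the statement along the isomorphism $\iota\colon\mathbb P^1 \overset{\sim}{\to} \PG[\slt]$ from \cref{exNslt}. Throughout the paper we have been computing the sheaves $\gker{M}$, $\gim[j]{M}$, $\gcoker[j]{M}$, and therefore $\F_i(M)$, as sheaves on $\mathbb P^1$ by pulling back along $\iota$, whereas \cref{thmOm} is stated intrinsically on $\PG[\slt]$. So the content of the corollary is purely a translation of the twist along the pullback.

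First I would observe that since $\iota$ is an isomorphism of varieties, the pullback functor $\iota^\ast$ is exact and commutes with kernels, images, and cokernels of sheaf maps. In particular it takes $\F_i(M)$, computed on $\PG[\slt]$, to $\F_i(M)$ computed on $\mathbb P^1$; and it takes isomorphisms to isomorphisms. Applying $\iota^\ast$ to the isomorphism of \cref{thmOm} yields
\[\iota^\ast\F_i(M) \simeq \iota^\ast\!\left(\F_{p-i}(\Omega M)(p-i)\right).\]

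Next I would unwind the twist. The sheaf $\OPG(1)$ is associated to the shifted graded module $\tfrac{k[x,y,z]}{(xy+z^2)}[1]$, and pulling back along $\iota$ sends the coordinates $(x,y,z)$ to $(s^2,-t^2,st)$, each of degree $2$ in $k[s,t]$. Thus $\iota^\ast\OPG(1) \simeq \OP(2)$, and more generally $\iota^\ast\OPG(n) \simeq \OP(2n)$. Since twisting commutes with pullback via the projection formula (or directly, since $\iota^\ast(\F \otimes \OPG(n)) \simeq \iota^\ast\F \otimes \iota^\ast\OPG(n)$), the right side becomes $\F_{p-i}(\Omega M)(2(p-i)) = \F_{p-i}(\Omega M)(2p-2i)$, which gives the desired identity.

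There is no real obstacle here; the only thing to be careful about is keeping the two twist conventions straight. One could verify the compatibility $\iota^\ast\OPG(1) \simeq \OP(2)$ explicitly on the standard affine charts of $\PG[\slt]$ covered by $x\neq 0$ and $y\neq 0$, using the inverse map described in \cref{exNslt}, but this is essentially immediate from the fact that $\iota$ is defined by a degree-$2$ morphism of graded rings.
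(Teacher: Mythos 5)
Your proposal is correct and matches the paper's argument: both apply the pullback $\iota^\ast$ to \cref{thmOm} and use the identification $\iota^\ast\OPG[\slt](1) \simeq \OP(2)$ coming from the degree-$2$ graded-ring map $(x,y,z)\mapsto(s^2,-t^2,st)$, converting a twist by $p-i$ on $\PG[\slt]$ into a twist by $2p-2i$ on $\mathbb P^1$. Your write-up spells out the exactness and compatibility of $\iota^\ast$ with twisting in slightly more detail than the paper, but the route is the same.
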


Observe that $i \neq p$ in the theorem; this is why our calculation of $\F_p(V(\lambda))$ for $\lambda < p$ does not induce a calculation of $\F_p(V(\lambda))$ when $\lambda \geq p$.

\begin{Prop}
If $V(\lambda)$ is indecomposable and $i \neq p$ then
\[\F_i(V(\lambda)) \simeq \begin{cases} \mathcal O_{\mathbb P^1}(-\lambda) & \text{if} \ i = \lambda + 1 \pmod p \\ 0 & \text{otherwise.} \end{cases}\]
\end{Prop}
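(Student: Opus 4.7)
The plan is to induct on $r$ in the expression $\lambda = rp + a$ with $0 \leq a < p$. Indecomposability of $V(\lambda)$ forces $a \neq p - 1$ whenever $r \geq 1$ (since $p \mid \lambda + 1$ would obstruct indecomposability per \cref{thmPremet}). For the base case $r = 0$, the module $V(\lambda)$ is simple, so \cref{thmFiSimp} gives $\F_i(V(\lambda)) = \gker{V(\lambda)}$ when $i = \lambda + 1$ and $0$ otherwise, and \cref{thmKer} specialised to $r = 0$ identifies $\gker{V(\lambda)}$ with $\mathcal{O}_{\mathbb{P}^1}(-\lambda)$. Restricting to $i \neq p$, the unique $i \in \{1, \ldots, p-1\}$ with $i \equiv \lambda + 1 \pmod p$ is precisely $i = \lambda + 1$ when $\lambda \leq p - 2$, while no such $i$ exists when $\lambda = p - 1$; both subcases match the claimed formula.

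For the inductive step ($r \geq 1$), the plan is to realise $V(\lambda)$ as a Heller shift of a smaller indecomposable. Set $\nu = (r-1)p + (p - a - 2)$, so that $\nu = sp + b$ with $s = r - 1$ and $b = p - a - 2 \in \{0, \ldots, p - 2\}$. Then \cref{propOmega} applied to $V(\nu)$ yields $\Omega V(\nu) = V((s+2)p - b - 2) = V(\lambda)$. One checks that $V(\nu)$ is itself indecomposable (either $\nu < p$ and it is simple, or $\nu \geq p$ with $\nu + 1 \equiv -a - 1 \not\equiv 0 \pmod p$), and its quotient $s = r - 1$ is strictly less than $r$, so the inductive hypothesis applies to $V(\nu)$.

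Applying \cref{corFiOmega} to $V(\nu)$ and reindexing via $j = p - i$ gives
\[\F_j(V(\lambda)) \simeq \F_{p-j}(V(\nu))(-2j) \qquad (1 \leq j \leq p - 1).\]
By the inductive hypothesis, $\F_{p-j}(V(\nu)) \simeq \mathcal{O}_{\mathbb{P}^1}(-\nu)$ precisely when $p - j \equiv \nu + 1 \pmod p$ and is zero otherwise. Since $\nu + 1 \equiv p - a - 1 \pmod p$, this reduces to $j \equiv a + 1 \pmod p$, selecting the single value $j = a + 1$ in the range $1 \leq j \leq p - 1$. The twist contributes $-\nu - 2(a + 1) = -(r-1)p - (p - a - 2) - 2(a + 1) = -\lambda$, so $\F_{a+1}(V(\lambda)) \simeq \mathcal{O}_{\mathbb{P}^1}(-\lambda)$ and $\F_j(V(\lambda)) = 0$ for other $j$ in the range; since $\lambda + 1 \equiv a + 1 \pmod p$, this matches the claim.

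The main obstacle is purely bookkeeping: identifying the correct $\nu$ so that $\Omega V(\nu) = V(\lambda)$, making sure the indecomposability and range conditions line up, and checking that the twist in the reindexed corollary produces $\mathcal{O}_{\mathbb{P}^1}(-\lambda)$ on the nose. Once those are in hand, the inductive step is a one-line consequence of \cref{corFiOmega} and the hypothesis.
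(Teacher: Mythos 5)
Your proposal is correct and follows the paper's proof essentially step for step: induction on $r$, base case via \cref{thmFiSimp} and \cref{thmKer}, inductive step via $\Omega V(rp - a - 2) = V(\lambda)$ from \cref{propOmega} and the twist bookkeeping in \cref{corFiOmega}. You add some welcome explicit checks (the $\lambda = p-1$ subcase of the base, the indecomposability of $V(\nu)$, and the arithmetic of the final twist), but the argument is the same.
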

\begin{proof}
Let $\lambda = rp + a$ where $0 \leq a < p$ is the remainder of $\lambda$ modulo $p$.  We prove the result by induction on $r$.  The base case $r = 0$ follows from Propositions \ref{thmKer} and \ref{thmFiSimp}.  For the inductive step assume $r \geq 1$.  By hypothesis the formula holds for $rp - a - 2$ and by \cref{propOmega} we have $\Omega V(rp - a - 2) = V(\lambda)$.  Applying \cref{corFiOmega} we get
\begin{align*}
\F_i(V(\lambda)) &= \F_{p - i}(V(rp - a - 2))(-2i). \\
\intertext{If $i = a + 1$ then}
\F_{a + 1}(V(\lambda)) &= \F_{p - a - 1}(V(rp - a - 2))(-2a - 2) \\
&= \mathcal O_{\mathbb P^1}(a + 2 - rp)(-2a - 2) \\
&= \mathcal O_{\mathbb P^1}(-a - rp) \\
&= \mathcal O_{\mathbb P^1}(-\lambda)
\end{align*}
whereas if $i \neq a + 1$ then $p - i \neq p - a - 1$ so $\F_{p - i}(V(rp - a - 2)) = 0$.  This completes the proof.
\end{proof}

\bibliographystyle{../Refs/alphanum}
\bibliography{../Refs/refs}

\begin{thebibliography}{SFB2}

\bibitem[BO]{benkartSl2reps}
Georgia~M. Benkart and J.~Marshall Osborn.
\newblock Representations of rank one {L}ie algebras of characteristic $p$.
\newblock In {\em Lie Algebras and Related Topics ({N}ew {B}runswick, {N.J.},
  1981)}, volume 933 of {\em Lecture Notes in Math.}, pages 1--37. Springer,
  Berlin, 1982.

\bibitem[BP]{benPevtVectorBundles}
D.~J. Benson and J.~Pevtsova.
\newblock A realization theorem for modules of constant {J}ordan type and
  vector bundles.
\newblock {\em ArXiv e-prints}, July 2010.

\bibitem[CFP]{cfpConstJType}
Jon~F. Carlson, Eric~M. Friedlander, and Julia Pevtsova.
\newblock Modules of constant {J}ordan type.
\newblock {\em J. Reine Angew. Math.}, 614:191--234, 2008.

\bibitem[FP]{friedpevConstructions}
Eric~M. Friedlander and Julia Pevtsova.
\newblock Constructions for infinitesimal group schemes.
\newblock {\em Trans. Amer. Math. Soc.}, 363(11):6007--6061, 2011.

\bibitem[Pre]{premetSl2}
A.~A. Premet.
\newblock The {G}reen ring of a simple three-dimensional {L}ie $p$-algebra.
\newblock {\em Izv. Vyssh. Uchebn. Zaved. Mat.}, (10):56--67, 1991.

\bibitem[SF]{stradeFarnModularLie}
Helmut Strade and Rolf Farnsteiner.
\newblock {\em Modular {L}ie Algebras and their Representations}, volume 116 of
  {\em Monographs and Textbooks in Pure and Applied Mathematics}.
\newblock Marcel Dekker Inc., New York, 1988.

\bibitem[SFB1]{bfs1paramCoh}
Andrei Suslin, Eric~M. Friedlander, and Christopher~P. Bendel.
\newblock Infinitesimal {$1$}-parameter subgroups and cohomology.
\newblock {\em J. Amer. Math. Soc.}, 10(3):693--728, 1997.

\bibitem[SFB2]{bfsSupportVarieties}
Andrei Suslin, Eric~M. Friedlander, and Christopher~P. Bendel.
\newblock Support varieties for infinitesimal group schemes.
\newblock {\em J. Amer. Math. Soc.}, 10(3):729--759, 1997.

\bibitem[Sta]{starkHo1}
Jim Stark.
\newblock Detecting projectivity in sheaves associated to representations of
  infinitesimal groups.
\newblock In preperation, May 2013.

\end{thebibliography}
\end{document}